\documentclass[11pt]{amsart}
\usepackage[a4paper,margin=27mm]{geometry}
\usepackage[T1]{fontenc}
\usepackage{lmodern,microtype,amsmath,amssymb,mathtools,booktabs,array,tabularx,enumitem}
\usepackage[hidelinks]{hyperref}
\hypersetup{
 pdftitle={Three irrationality results for the dilogarithm},
 pdfauthor={Thomas Prellberg},
 pdfsubject={Irrationality of Li2(-1/4), Li2(1/5), and Li2(-1/3)},
 pdfkeywords={dilogarithm, irrationality, Rhin-Viola, Viola-Zudilin, permutation group method}
}
\newcommand{\Li}{\operatorname{Li}}
\newcommand{\Log}{\operatorname{Log}}
\newcommand{\Z}{\mathbb Z}
\newcommand{\Q}{\mathbb Q}

\newcommand{\dd}{\,d}
\newcommand{\Res}{\operatorname{Res}}
\newcommand{\ind}{\mathbf 1}
\newcommand{\vp}{v_p}
\newcommand{\lcm}{\operatorname{lcm}}

\newcommand{\Bfun}{\mathrm B}

\newcommand{\dist}{\operatorname{dist}}
\theoremstyle{plain}
\newtheorem{theorem}{Theorem}[section]
\newtheorem{lemma}[theorem]{Lemma}
\newtheorem{proposition}[theorem]{Proposition}

\newtheorem*{corollary*}{Corollary}
\theoremstyle{remark}\newtheorem{remark}[theorem]{Remark}
\numberwithin{equation}{section}
\allowdisplaybreaks
\title[Three irrationality results for the dilogarithm]{Three Irrationality Results for the Dilogarithm:\\Rhin--Viola and Viola--Zudilin Constructions at $-1/4$, $1/5$, and $-1/3$}
\author{Thomas Prellberg}
\address{School of Mathematical Sciences, Queen Mary University of London, Mile End Road, London E1 4NS, United Kingdom}
\email{t.prellberg@qmul.ac.uk}
\subjclass[2020]{Primary 11J72; Secondary 11J82, 33B30}
\keywords{dilogarithm, irrationality, permutation group method, hypergeometric integral, exact certificates}
\date{7 September 2026}
\begin{document}
\begin{abstract}
We prove the irrationality of
\[
 \Li_2(-1/4),\qquad \Li_2(1/5),\qquad \Li_2(-1/3).
\]
The three arguments form a natural progression.  The first is an endpoint completion of the five-parameter Rhin--Viola method.  Rhin and Viola's 2019 treatment already supplies the negative-argument continuation, permutation invariance and factorial divisor; at $z=-4$ we use three bounded shifts, an elementary fixed-contour estimate and a four-term recurrence to remove the remaining nonvanishing problem in the complex-saddle regime.  For $1/5$ we pass to the six-parameter Viola--Zudilin family and apply the Rhin--Viola factorial transformations term by term inside two binomial expansions of the same integral.  This inherited divisor crosses the arithmetic threshold that the unrefined five-parameter construction does not reach in our computations.  The same crossbreed, combined with the negative-argument continuation, proves the result at $-1/3$.  All proof-critical finite inequalities are accompanied by exact executable certificates.  Supplementary computations propagate $10{,}000$ exact primitive coefficient pairs for each of $1/5$ and $-1/3$, with complete gcd removal and no failure of the certified smallness or adjacent nonproportionality checks.
\end{abstract}
\maketitle

\section{Introduction and statement of results}\label{sec:intro}
For $|w|<1$ write
\[
 \Li_2(w)=\sum_{m\ge1}\frac{w^m}{m^2}.
\]
Hata proved the irrationality of $\Li_2(1/s)$ for all integers $s\le -5$ and $s\ge7$ \cite{Hata}.  Rhin and Viola developed the five-parameter permutation-group method and reached $s=6$ on the positive side \cite{RV05}.  Marcovecchio subsequently treated linear independence problems for polylogarithms at algebraic points \cite{Marc16}.  In 2019 Rhin and Viola extended their construction explicitly to negative rational $z$, including analytic continuation, the negative branch of the logarithm, permutation invariance and the factorial divisor, and obtained linear independence results for $1,\Li_1(1/z),\Li_2(1/z)$ for integer $z\le-5$ \cite{RV19}.  Viola and Zudilin introduced a six-parameter extension yielding stronger multi-value linear-independence results in more distant ranges \cite{VZ18}.

The integer arguments $s\in\{-4,-3,-2,2,3,4,5\}$ are recorded as unresolved individual irrationality cases in the account of Calegari, Dimitrov and Tang \cite[Remark~2.8.2 and \S15.2]{CDT}.  We settle three of them.

\begin{theorem}\label{thm:main}
The three numbers
\[
 \Li_2(-1/4),\qquad \Li_2(1/5),\qquad \Li_2(-1/3)
\]
are irrational.
\end{theorem}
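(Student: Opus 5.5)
The proof goes through the standard irrationality criterion. Fix $z\in\{-4,5,-3\}$, so the target is $\Li_2(1/z)$. We construct integers $a_n,b_n,c_n$ and a divisor $\Phi_n$ such that
\[
 I_n=a_n+b_n\Li_1(1/z)+c_n\Li_2(1/z)
\]
is small. Alone this would only give linear independence, so we use the Rhin--Viola shape in which the $\Li_1$ coefficient vanishes identically, or is eliminated by pairing. This leaves $\Phi_n^{-1}d_n^2 z^{?n}\,(a_n+c_n\Li_2(1/z))=\Phi_n^{-1}d_n^2\,I_n$ with integer coefficients. Irrationality then follows once three things hold. First, $\limsup \frac1n\log|I_n|<-(\text{arithmetic growth})$, where the arithmetic growth is $2+\log|z|\cdot(\cdot)$ minus $\lim\frac1n\log\Phi_n$. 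Second, $I_n\neq0$ for infinitely many $n$. Third, consecutive pairs are nonproportional, which makes the infinitely many nonzero small forms genuinely distinct.

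For $z=-4$ we take the five-parameter Rhin--Viola integral
\[
 I(h,j,k,l,m)=\int_0^1\!\!\int_0^1\frac{x^h(1-x)^i y^k(1-y)^j}{(1-xy)^{i+j-l}}\cdots\frac{dx\,dy}{(z-xy)^{\cdot}}
\]
in the negative-argument form of \cite{RV19}. We take its continuation, its permutation group, and its factorial divisor $\Phi_n=\prod_p p^{\min_\sigma\ldots}$ (via the $\omega$-function) directly from \cite{RV19}, and we choose exponent vectors $n\cdot(\alpha_1,\dots,\alpha_5)$ plus bounded shifts. The asymptotics come from a saddle-point analysis of the integrand on a fixed contour. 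In the complex-saddle regime the modulus of $I_n$ is $e^{n\Re S}$ times an oscillating factor, and $I_n$ could vanish. To handle this we use three bounded shifts of the parameters, which give integrals $I_n^{(1)},I_n^{(2)},I_n^{(3)}$ linked with $I_{n+1}$ by a four-term linear recurrence with polynomial coefficients (Zeilberger/contiguity). If all of them vanished for large $n$, the recurrence together with an elementary fixed-contour lower bound on one combination would give a contradiction. So at least one small form in every window is nonzero. We then check numerically, by exact certificate, that $2+2\log 4\cdot(\ldots)-\phi<-\Re S$.

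For $z=5$ and $z=-3$ the five-parameter divisor falls short of this threshold, so we switch to the six-parameter Viola--Zudilin integral. We write that integral as a finite sum in two different binomial expansions of one factor, so that each term is a five-parameter Rhin--Viola integral. We apply the Rhin--Viola factorial transformations to each term, and the $p$-adic valuation bound for the whole integral is the minimum over transformations that are valid uniformly across all terms. This gives an inherited divisor $\Phi_n$, computed via the Chudnovsky--Rukhadze--Hata $\omega$-function integrals. For $z=-3$ we also bring in the negative-branch continuation from the $z=-4$ case.

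The main obstacle is the arithmetic check: showing that the inherited divisor truly exceeds the threshold, that is,
\[
 \frac1n\log|I_n|+2-\lim\frac1n\log\Phi_n+(\text{denominator growth from }z)<0,
\]
with explicit rigorous error bounds. We would first find good parameters by numerical search over rational directions. We would then certify the $\omega$-integral, which is a finite sum of piecewise-constant floor terms, and the saddle value with interval arithmetic. Nonvanishing would be handled through nonproportionality of adjacent forms, via a determinant (Casoratian) of the recurrence that is nonzero identically.
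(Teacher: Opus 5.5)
You have the paper's architecture: Rhin--Viola forms at $z=-4$ with three bounded shifts, and at $z=5,-3$ a Viola--Zudilin form split by two binomial expansions into Rhin--Viola summands. But the nonvanishing step, which is the whole new content at $-1/4$, is not supplied, and your substitute for $1/5$ and $-1/3$ cannot be carried out for all $n$.

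At $z=-4$ your argument rests on ``an elementary fixed-contour lower bound on one combination''. A fixed contour gives only upper bounds. A lower bound for an integral governed by a conjugate pair of complex saddles would need a full two-saddle asymptotic with nonvanishing amplitude and error control, which is exactly what the paper avoids. Your recurrence also does not propagate vanishing. A relation tying $I_n^{(1)},I_n^{(2)},I_n^{(3)}$ to $I_{n+1}$ forces $I_{n+1}=0$, but says nothing about the shifted forms at level $n+1$.

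The missing idea is to freeze $n$ and recur in the shift parameter itself. The paper writes $F_{n,r}=\mu_{10n+r}$ with $\mu_H=\int_0^1(1-x)^Hg_n(x)\,dx$. The Gauss equation for $g_n$ gives a four-term recurrence in $H$ whose top coefficient $(H+2n+1)(H+3n+1)$ never vanishes. So three consecutive zeros would kill the entire tail of $\mu_H$. But $g_n(x)=c_nx^{8n}+O(x^{8n+1})$ with $c_n=(-4)^{-6n-1}(2n-1)!(5n)!/(7n)!\neq0$, hence $\mu_H\sim c_n(8n)!\,H^{-8n-1}\neq0$ as $H\to\infty$. No lower bound and no uniformity in $n$ is needed.

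The same device settles the Viola--Zudilin cases: a certified six-term telescoping recurrence in $H$ with positive top coefficient, plus $F_n(H)\sim(4n)!(n-1)!\,z^{-7n-1}H^{-5n-1}$ for $z=5,-3$. Your Casoratian alternative fails there. The available recurrences, in $n$ along the ray and in $H$, have order five, not two. So the $2\times2$ determinant of adjacent coefficient pairs is not a Casoratian and has no product formula. The paper can only check it numerically for $n\le10^4$ and explicitly does not count that as proof.

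On the arithmetic side you take ``the minimum over transformations valid uniformly across all terms'', i.e.\ $\min_s\max_\lambda$ of the local cost. The paper's divisor comes from the other order of quantifiers. Because $\boldsymbol c(t)=\frac{T(t)}{T(s)}\boldsymbol c(s)$ is an identity of coefficient vectors, the best orbit element $s$ may be chosen separately for each summand $\lambda$ and each prime $p$. One then maximizes over $\lambda$, and takes the better of the two expansions separately on each interval of $\{n/p\}$; the witness switches between A and B. Since $\max_\lambda\min_s\le\min_s\max_\lambda$, your bound is weaker. You give no reason it still crosses the threshold, and the margins are thin: $\gamma\approx0.158$ at $1/5$ and $\approx0.051$ at $-1/3$.

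You also need an argument transferring a finite floor computation to all $n$ when the cost depends on $\lambda$ and on the shift $r$. The paper discards primes dividing $E_n=\prod_{d\le14}\prod_{|b|\le6}(dn+b)$. This costs only $O(\log n)$ but is genuinely necessary: at $n=1000$, $p=4001$, $r=2$ the two-expansion bound fails without it.

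Finally, nonproportionality of consecutive forms is not needed for irrationality. A single nonzero form $K_nJ_{n,r}\in\Z+\Z\theta$ of modulus $<1/q$ already contradicts $\theta\in q^{-1}\Z$.
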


The point of combining the proofs is that their relationship is clearer together than separately.
\begin{enumerate}[label=(\roman*),leftmargin=2.2em]
\item At $-1/4$ no new prime divisor is needed.  A direct check of the Rhin--Viola transformations shows that the ray $(10,8,8,5,10)$ is $\Phi$-equivalent to their 2019 example $(10,8,7,6,10)$; in particular the two have the same exponent multiset.  The published negative-$z$ arithmetic therefore applies unchanged.  What remains at this endpoint is a nonvanishing issue: the relevant asymptotic contribution comes from a conjugate complex pair, whereas the qualitative criterion used in \cite{RV19} assumes a strict real-saddle ordering $c_3<c_0<c_1$.  We bypass this by working with three bounded shifts and a recurrence.  The resulting proof establishes irrationality only; it does not claim the stronger linear independence with $\Li_1(-1/4)$ obtained by Rhin--Viola in their range.
\item At $1/5$ the five-parameter construction no longer crosses the threshold in our extensive searches.  The new arithmetic ingredient is to expand one six-parameter Viola--Zudilin integral in two ways as sums of five-parameter Rhin--Viola forms and to choose the best factorial transform separately for each summand and prime.  The resulting common divisor is the essential new ingredient.
\item At $-1/3$ the same six-parameter arithmetic crossbreed is combined with the negative-argument continuation.  This is the most delicate of the three proofs numerically: the certified final exponential margin is just over $1/20$.
\end{enumerate}
The parameter searches mentioned in (ii) and (iii) are only motivation; no optimality assertion for the five-parameter family is used in any proof.

The proof-critical computations are exact.  Separately implemented checks reproduce the coefficient identities, divisor witnesses, recurrences, contour inequalities and endpoint signs.  Section~\ref{sec:verification-combined} records the verification boundary and a $10{,}000$-term stress test for each of the two six-parameter constructions.

\section{The Rhin--Viola endpoint at $-1/4$}\label{sec:quarter}
The negative-argument framework in this section is already part of \cite{RV19}; our additions are the explicit fixed-contour bound in the chosen normalization and the bounded-shift nonvanishing argument.
For $n\ge1$ and $r=0,1,2$ put
\begin{equation}\label{q:eq:shifted-tuple}
  \boldsymbol\nu_{n,r}:=(10n+r,8n,8n,5n,10n).
\end{equation}
\subsection{Five-parameter linear forms and arithmetic normalisation}\label{q:sec:rv}

We recall the part of \cite{RV05} that we need.  For $z>1$ and
nonnegative integers $h,j,k,l,m$, put
\begin{equation}\label{q:eq:I0}
 I_z^{(0)}(h,j,k,l,m)
 =z^{-l-m}\int_0^1\!\int_0^1
 \frac{x^j(1-x)^h y^k(1-y)^l}
      {(x(1-y)+yz)^{j+k-m+1}}\dd y\dd x .
\end{equation}
For each fixed $x$, let $\gamma_{x,z}$ be a small positively oriented circle
about the pole
\[
  y_0=\frac{x}{x-z}.
\]
Define
\begin{equation}\label{q:eq:I1}
 I_z^{(1)}(h,j,k,l,m)
 =z^{-l-m}\int_0^1\frac{1}{2\pi i}
 \oint_{\gamma_{x,z}}
 \frac{x^j(1-x)^h y^k(1-y)^l}
      {(x(1-y)+yz)^{j+k-m+1}}\dd y\dd x
\end{equation}
and
\begin{equation}\label{q:eq:I}
  I_z(h,j,k,l,m)=I_z^{(0)}(h,j,k,l,m)
                  -(\log z)I_z^{(1)}(h,j,k,l,m).
\end{equation}
These are equations (2.1), (2.2) and (2.4) of
\cite{RV05}.

Write $d_N=\lcm(1,\ldots,N)$, with $d_0=1$.  Rhin and Viola associate to a
tuple the nine integers
\begin{equation}\label{q:eq:Sset}
 \mathcal S=\{h,j,k,l,m,l+m-j,m+h-k,h+j-l,j+k-m\}.
\end{equation}
Their group-invariant l.c.m. parameters are
\[
 M=\max\mathcal S
\]
and a second maximum $N$ defined in \cite[(4.2)]{RV05}; for the
tuples used below the values will be immediate.  Their Theorem 2.1 and the
group-invariant enlargement in \cite[\S4]{RV05} give integer
polynomials after multiplication by $d_Md_N$ and suitable powers of $z$ and
$z-1$.

\begin{proposition}[Rhin--Viola, specialized]\label{q:prop:rv-specialized}
For every $n\ge1$, $r\in\{0,1,2\}$ and real $z>1$, there exist polynomials
$P_{n,r},Q_{n,r}\in\Z[z]$ such that
\begin{equation}\label{q:eq:rv-specialized}
 \begin{aligned}
 &d_{13n+r}d_{12n+r}z^{16n}(z-1)^{3n-r}
 I_z(\boldsymbol\nu_{n,r})\\
 &\hspace{35mm}=P_{n,r}(z)-Q_{n,r}(z)\Li_2(1/z).
 \end{aligned}
\end{equation}
\end{proposition}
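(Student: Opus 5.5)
The plan is to read Proposition~\ref{q:prop:rv-specialized} off \cite[Theorem~2.1]{RV05} and the group-invariant refinement of \cite[\S4]{RV05}, applied to $(h,j,k,l,m)=\boldsymbol\nu_{n,r}=(10n+r,8n,8n,5n,10n)$. Nothing new is needed beyond checking hypotheses and identifying the normalising exponents.

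\textbf{Step 1: hypotheses and the set $\mathcal S$.} I first check that all nine entries of \eqref{q:eq:Sset} are nonnegative. This gives convergence of \eqref{q:eq:I0} and places the tuple inside the domain of the Rhin--Viola transformation group. Direct computation gives
\[
 \mathcal S=\{10n+r,\;8n,\;8n,\;5n,\;10n,\;7n,\;12n+r,\;13n+r,\;6n\},
\]
since $l+m-j=7n$, $m+h-k=12n+r$, $h+j-l=13n+r$ and $j+k-m=6n$. All entries are positive for $n\ge1$ and $r\in\{0,1,2\}$. Hence $M=\max\mathcal S=13n+r$. Evaluating the second maximum of \cite[(4.2)]{RV05} from this list should give $N=12n+r$, coming from $m+h-k$. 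I will verify this directly against the definition there, because the shift by $r$ enters two different entries.

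\textbf{Step 2: the Rhin--Viola integrality statement.} \cite[Theorem~2.1]{RV05} expresses $I_z(h,j,k,l,m)$, after clearing the prefactor $z^{-l-m}$ and the denominator arising from $(x(1-y)+yz)^{j+k-m+1}$, as $\bigl(P(z)-Q(z)\Li_2(1/z)\bigr)$ divided by $d_M d_N$ times explicit powers of $z$ and $z-1$, with $P,Q\in\Z[z]$. The contour term \eqref{q:eq:I1} combines with $\log z$ to produce the $\Li_2(1/z)$ coefficient. In the present case the two normalising powers should be:
\begin{itemize}
\item $z^{j+k}=z^{16n}$;
\item $(z-1)^{k+l-h}=(z-1)^{3n-r}$, whose exponent is nonnegative because $r\le2<3n$.
\end{itemize}
The $d_Md_N$ factor, rather than the cruder factor from \cite[Thm.~2.1]{RV05} alone, comes from \cite[\S4]{RV05}. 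There, the invariance of $I_z$ under the permutation group $\Phi$ up to the explicit normalisation lets one use the lcm attached to the best transformed tuple. The $\Phi$-invariance of $\mathcal S$ as a multiset makes $M$ and $N$ well defined.

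\textbf{Step 3: real $z>1$.} If the cited statements are phrased for integer $z$, I will note that their proof is a coefficient identity in the Laurent expansion in $1/z$. The identity \eqref{q:eq:rv-specialized} therefore holds as an identity of analytic functions on $z>1$, with the same polynomials $P_{n,r},Q_{n,r}$.

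\textbf{Main obstacle.} I expect the main obstacle to be bookkeeping rather than mathematics. I must match the normalisation convention of \eqref{q:eq:I0}, in particular the factor $z^{-l-m}$ and the sign of the $\log z$ term, with the exact powers of $z$ and $z-1$ in \cite{RV05}. I must also confirm that $N$ as defined in \cite[(4.2)]{RV05} really equals $12n+r$ for all three shifts $r$. I would settle both points by symbolic computation at small $n$ before relying on the general statement.
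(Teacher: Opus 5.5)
Your proposal follows the paper's proof: compute $\mathcal S$ for $\boldsymbol\nu_{n,r}$, read off $M=13n+r$, $N=12n+r$ and the two power exponents, and invoke \cite[Theorem~2.1 and \S4]{RV05}. One bookkeeping correction: the exponents in \cite[(2.9)]{RV05} are $\alpha=\max\{j+k,k+l,l+m\}$ and $\beta=\max\{0,k+l-h\}$, not $j+k$ and $k+l-h$; on this ray they reduce to $16n$ and $3n-r$, so your values are correct.
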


\begin{proof}
For the tuple \eqref{q:eq:shifted-tuple}, the set \eqref{q:eq:Sset} is
\[
 \{10n+r,8n,8n,5n,10n,7n,12n+r,13n+r,6n\}.
\]
Thus the group-invariant maxima of \cite[(4.1)--(4.2)]{RV05} are
\[
 M=13n+r,\qquad N=12n+r.
\]
The two power parameters in \cite[(2.9)]{RV05} are
\[
 \alpha=\max\{j+k,k+l,l+m\}=16n,
 \qquad
 \beta=\max\{0,k+l-h\}=3n-r.
\]
Equation \eqref{q:eq:rv-specialized} is therefore the stated specialization of
\cite[Theorem 2.1 and \S4]{RV05}.
\end{proof}

It is convenient to enlarge all three forms to one common normalization.
Since $d_{13n+r}\mid d_{13n+2}$ and $d_{12n+r}\mid d_{12n+2}$, multiplying
\eqref{q:eq:rv-specialized} by the corresponding integer ratios and by
$(z-1)^r$ gives
\begin{equation}\label{q:eq:common-rv}
 d_{13n+2}d_{12n+2}z^{16n}(z-1)^{3n}
 I_z(\boldsymbol\nu_{n,r})
 \in \Z[z]+\Z[z]\Li_2(1/z)
\end{equation}
for $r=0,1,2$.

\subsection{The explicit $z=-4$ realization}\label{q:sec:continuation}

We continue $z$ from $4$ to $-4$ along the upper semicircle
\begin{equation}\label{q:eq:zpath}
  z=4e^{it},\qquad 0\le t\le\pi,
\end{equation}
using the branch $\Log z$ with $0\le\arg z\le\pi$.  The following lemma fixes
the side of the moving pole and the logarithmic term; this is the point at
which negative arguments require more care than the positive theory.

\begin{lemma}[Continuation of the inner contour]\label{q:lem:continuation}
Fix $0<x\le1$.  Along \eqref{q:eq:zpath}, the pole
\[
 y_0(z)=\frac{x}{x-z}
\]
lies in the upper half-plane for $0<t<\pi$.  Hence the continuation of the
original interval $[0,1]$ passes below the pole.  At $z=-4$ it may be
deformed to a path $\Gamma_x$ defined below.  Under
\[
 u=x+(z-x)y,
\]
the lower detour in the $y$-plane becomes an upper detour about $u=0$, and
its logarithmic contribution is
\begin{equation}\label{q:eq:log-contribution}
  \Log(-4)-\log x=\log4+i\pi-\log x.
\end{equation}
Moreover, the continuation of $\Li_2(1/z)$ along \eqref{q:eq:zpath} ends at the
ordinary real value $\Li_2(-1/4)$.
\end{lemma}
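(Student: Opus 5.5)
The proof splits into four independent pieces: the location of the pole, the deformation to $\Gamma_x$, the logarithmic term after the substitution, and the continuation of $\Li_2(1/z)$.

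\textbf{Location of the pole.} Compute directly. For $z=4e^{it}$ and $0<x\le1$,
\[
 y_0=\frac{x}{x-z}=\frac{x(x-\bar z)}{|x-z|^2},
 \qquad
 \operatorname{Im}y_0=\frac{x\operatorname{Im}z}{|x-z|^2}=\frac{4x\sin t}{|x-z|^2}>0
\]
for $0<t<\pi$. The denominator never vanishes, because $|z|=4>x$.

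\textbf{Deformation to $\Gamma_x$.} At $t=0$ we have $y_0=x/(x-4)<0$, so the pole is off $[0,1]$. As $t$ increases, $y_0$ moves into the open upper half-plane and reaches $y_0=x/(x+4)\in(0,1)$ at $t=\pi$. By continuity the integration path must be pushed away from the pole without crossing it. Since the pole approaches the real axis from above, the continued path is $[0,1]$ with a small detour passing below $y_0$.

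This is the natural candidate for $\Gamma_x$: run from $0$ to $y_0-\varepsilon$, take a lower semicircle around $y_0$, then run from $y_0+\varepsilon$ to $1$. It equals the principal-value integral plus half of the residue contribution (with sign fixed by orientation).

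The map $u=x+(z-x)y$ at $z=-4$ is $u=x-(4+x)y$. Its factor $-(4+x)$ is a negative real number, so it reverses orientation and sends the lower half-plane to the upper half-plane. Hence the lower detour about $y_0$ becomes an upper detour about $u=0$.

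\textbf{The logarithmic term.} In the original $z>1$ theory (RV05), after the substitution $u=x+(z-x)y$ the logarithm arises from $\int_x^z du/u=\log z-\log x$. This is where $\log z$ in \eqref{q:eq:I} comes from, and $I^{(1)}$ is its residue coefficient.

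Follow $\int_x^{z}du/u$ continuously along \eqref{q:eq:zpath}. The $u$-path ends at $z=-4$ and, by the previous step, passes above $0$. Its value is therefore $\Log(-4)-\log x$ with the branch $0\le\arg\le\pi$. This equals $\log4+i\pi-\log x$, which is \eqref{q:eq:log-contribution}. It is consistent with $\Log z$ being continued along the upper semicircle, since the argument increases from $0$ to $\pi$.

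\textbf{Continuation of $\Li_2(1/z)$.} Along the path, $|1/z|=1/4<1$, so the power series converges uniformly. The continuation is therefore just the series evaluated at $1/z=e^{-it}/4$, and at $t=\pi$ it gives $\sum(-1/4)^m/m^2=\Li_2(-1/4)$, a real number.

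\textbf{Main obstacle.} I expect the main difficulty to be the bookkeeping that justifies continuing the double integral together with the $\log z$ term as a whole. Specifically, one must show that the identity \eqref{q:eq:common-rv} persists under continuation, with $I_z^{(1)}$ given by the residue at the moving pole.

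I would handle this in three steps:
\begin{enumerate}[label=(\alph*)]
\item Note that both sides of \eqref{q:eq:rv-specialized}, and every ingredient of them, are analytic in $z$ on a neighbourhood of the path. The integrand is analytic in $z$ for $y$ on the deformed contour, and the pole stays off that contour.
\item Invoke uniqueness of analytic continuation.
\item Check near $x=0$ that the endpoint singularity $x^j$ and the collision of $y_0$ with $0$ cause no trouble. This uses uniform integrability, since $j\ge0$ and $y_0\to0$ only as $x\to0$.
\end{enumerate}
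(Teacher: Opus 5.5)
Your proposal is correct and follows the paper's proof essentially step for step: the same computation showing $\operatorname{Im}y_0>0$, the same observation that $u=(z-x)(y-a)$ with $z-x<0$ carries the lower detour to an upper detour about $u=0$, the same continuous evaluation of $\int_x^{-4}du/u$, and the same remark that $1/z$ stays on $|w|=1/4$. One aside should be corrected, although nothing in your argument depends on it: the pole has order $6n+1$, so the detour integral is a Hadamard finite part plus $\pi i$ times the residue, not a Cauchy principal value plus half the residue (that principal value generally diverges); separately, your small-$\varepsilon$ detour agrees with the paper's radius-$2/7$ path $\Gamma_x$ because the integrand's only singularity is the pole at $a$.
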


\begin{proof}
Write $z=u+iv$ with $v>0$.  Then
\[
 y_0(z)=\frac{x(x-u+iv)}{(x-u)^2+v^2},
\]
so $\operatorname{Im}y_0(z)>0$.  The transported interval therefore stays
below the moving pole.  At $z=-4$ write the lower detour as
\[
 y-a=\rho e^{i\vartheta},\qquad -\pi\le\vartheta\le0.
\]
Since $u=(z-x)(y-a)$ and $z-x<0$, its image has continuously chosen
argument $0\le\arg u\le\pi$: it is an upper detour about $u=0$.
Thus integrating $du/u$ from $u=x>0$ to $u=-4$ along the continued path
gives \eqref{q:eq:log-contribution}.

Finally, $1/z=\frac14e^{-it}$ remains throughout the open unit disc.  The
power-series branch of the dilogarithm therefore continues without meeting a
singularity and ends at $-1/4$.
\end{proof}

For the rest of the paper put
\begin{equation}\label{q:eq:delta-a-rho}
 \Delta(x,y)=x-(x+4)y,\qquad
 a=\frac{x}{x+4},\qquad
 \rho=\frac27.
\end{equation}
Thus $0<a\le1/5<\rho$ and $a+\rho<1$.  Let $\Gamma_x$ run from $0$ along the
real axis to $a-\rho$, then along the lower semicircle centred at $a$ with
radius $\rho$, and finally along the real axis from $a+\rho$ to $1$.  Let
$C_x$ be the positively oriented full circle with the same centre and radius.
Define
\begin{equation}\label{q:eq:Jdef}
\begin{split}
 J_n(x)={}&\int_{\Gamma_x}
  \frac{y^{8n}(1-y)^{5n}}{\Delta(x,y)^{6n+1}}\dd y\\
 &-\frac{\log4+i\pi}{2\pi i}
  \oint_{C_x}\frac{y^{8n}(1-y)^{5n}}
  {\Delta(x,y)^{6n+1}}\dd y,
\end{split}
\end{equation}
then
\begin{equation}\label{q:eq:g-mu-F}
 g_n(x)=x^{8n}J_n(x),\qquad
 \mu_H=\int_0^1(1-x)^H g_n(x)\dd x,
 \qquad
 F_{n,r}=\mu_{10n+r}.
\end{equation}

\begin{proposition}\label{q:prop:F-I}
For $r=0,1,2$,
\begin{equation}\label{q:eq:F-I}
 F_{n,r}=(-4)^{15n}I_{-4}(\boldsymbol\nu_{n,r}),
\end{equation}
where the right-hand side denotes continuation along \eqref{q:eq:zpath}.
Furthermore $g_n$ is real on $(0,1)$ and
\begin{equation}\label{q:eq:g-zero}
 g_n(x)=c_nx^{8n}+O(x^{8n+1})\qquad(x\to0^+),
\end{equation}
with
\begin{equation}\label{q:eq:cn}
 c_n=(-4)^{-6n-1}\frac{(2n-1)!(5n)!}{(7n)!}\ne0.
\end{equation}
\end{proposition}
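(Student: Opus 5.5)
The plan is to prove \eqref{q:eq:F-I} by a direct substitution into \eqref{q:eq:I0}--\eqref{q:eq:I}, and then to obtain reality and \eqref{q:eq:g-zero} from the structure of $J_n$.

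\emph{Identifying $F_{n,r}$.} For $(h,j,k,l,m)=\boldsymbol\nu_{n,r}$ we have $j+k-m+1=6n+1$, $l+m=15n$, $x^j=x^{8n}$ and $(1-x)^h=(1-x)^{10n+r}$. At $z=-4$ the denominator $x(1-y)+yz$ equals $\Delta(x,y)$, and the pole $y_0$ equals $a$. So, formally, $(-4)^{15n}I_{-4}(\boldsymbol\nu_{n,r})$ is $\int_0^1(1-x)^{10n+r}x^{8n}(\cdots)\dd x$ with the same inner integrand $y^{8n}(1-y)^{5n}\Delta^{-6n-1}$ as in \eqref{q:eq:Jdef}. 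The continuation is supplied by Lemma~\ref{q:lem:continuation}.
\begin{itemize}
\item The $I^{(0)}$ inner integral continues to an integral over a path from $0$ to $1$ passing below the moving pole. At $z=-4$ this path may be deformed to $\Gamma_x$, because the only singularity of the integrand is at $a$ and $0<a\le1/5<\rho$.
\item The small circle $\gamma_{x,z}$ in $I^{(1)}$ may be enlarged to $C_x$, since $a$ is again the only singularity.
\item By the lemma, $\Log z$ continues to $\log4+i\pi$. The $\log x$ part of \eqref{q:eq:log-contribution} is already absorbed in the definition of $I^{(0)}$ as a $y$-integral, so it needs no separate treatment.
\end{itemize}
Combining these gives \eqref{q:eq:F-I}. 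The one point needing care is the orientation and branch bookkeeping: lower detour versus positive circle, and the sign of the $\log$ term. I regard this as the main obstacle and will check it against the parametrisation in the proof of the lemma.

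\emph{Reality of $g_n$.} For $0<x<1$, complex conjugation fixes the integrand, because it has real coefficients. It maps $\Gamma_x$ to the upper-detour path $\Gamma_x^+$. Write $f(y)=y^{8n}(1-y)^{5n}\Delta(x,y)^{-6n-1}$ and $R:=\Res_{y=a}f\in\mathbb R$. Traversing the lower semicircle left to right is counterclockwise, so
\[
\int_{\Gamma_x}f\dd y-\int_{\Gamma_x^+}f\dd y=\oint_{C_x}f\dd y=2\pi iR .
\]
Hence $\operatorname{Im}\int_{\Gamma_x}f\dd y=\pi R$. The second term of \eqref{q:eq:Jdef} equals $-(\log4+i\pi)R$, whose imaginary part is $-\pi R$. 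The imaginary parts cancel, so $J_n(x)$ and hence $g_n(x)$ are real.

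\emph{Behaviour as $x\to0^+$.}
\begin{itemize}
\item \emph{Residue term.} Write $\Delta=-(x+4)(y-a)$. Then $R$ is, up to a constant, the $6n$-th derivative of $y^{8n}(1-y)^{5n}$ evaluated at $y=a$. Every term of this derivative contains a factor $a^{\ge 2n}$, so $R=O(x^{2n})$ and the residue term is negligible.
\item \emph{Path term.} For small $x$, deform $\Gamma_x$ (keeping it below the pole) to a fixed path from $0$ to $1$ that passes below a fixed neighbourhood of $0$; this is allowed because $a\to0$. The resulting integral is real-analytic in $x$ near $0$.
\item \emph{Value at $x=0$.} At $x=0$ the integrand becomes $(-4)^{-6n-1}y^{2n-1}(1-y)^{5n}$. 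This has no pole since $n\ge1$, so the path may be straightened to $[0,1]$. This gives
\[
J_n(0)=(-4)^{-6n-1}\Bfun(2n,5n+1)=(-4)^{-6n-1}\frac{(2n-1)!\,(5n)!}{(7n)!}=c_n .
\]
\end{itemize}
Therefore $J_n(x)=c_n+O(x)$, and multiplying by $x^{8n}$ yields \eqref{q:eq:g-zero} with $c_n\neq0$.
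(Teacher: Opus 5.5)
Your identification of $F_{n,r}$ is the paper's argument. Your proof of reality is correct and takes a different route: the paper reads reality off the explicit monomial formula \eqref{q:eq:J-real}, whereas your conjugation argument ($\Gamma_x$ minus its mirror image is $C_x$, and the resulting $i\pi R$ is cancelled by the $i\pi$ in $\Log(-4)$) needs no computation.

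\textbf{The gap is in the small-$x$ step.} The contour $\Gamma_x$ begins at the fixed point $y=0$, while the pole $a=x/(x+4)$ tends to $0$. Every path from $0$ therefore comes within distance $a$ of the pole, so no fixed contour keeps the integrand uniformly away from its singularity. Your claim that the path integral is real-analytic at $x=0$ is false. To see this, put $u=x+(z-x)y$, $z=-4$, and $(u-x)^{8n}(z-u)^{5n}=\sum_s e_s(x,z)u^s$. Then
\[
 \int_{\Gamma_x}\frac{y^{8n}(1-y)^{5n}}{\Delta(x,y)^{6n+1}}\dd y
 =\frac{1}{(z-x)^{13n+1}}\Bigl(\sum_{s\ne6n}e_s(x,z)\frac{z^{s-6n}-x^{s-6n}}{s-6n}
 +e_{6n}(x,z)\{\log4+i\pi-\log x\}\Bigr).
\]
Here $e_{6n}(x,z)=\binom{8n}{2n}z^{5n}x^{2n}+O(x^{2n+1})$ is not identically zero, so a genuine $x^{2n}\log x$ term is present. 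This is the exponent-$2n$ logarithmic solution of the Gauss equation that the paper invokes. Consequently your argument justifies neither the continuity at $x=0$ (which you use to evaluate $J_n(0)$ by setting $x=0$ in the integrand) nor the $O(x)$ remainder.

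The conclusion is still true, since $x^{2n}\log x=O(x)$ for $n\ge1$, but this must be proved. There are two ways to do it.

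\emph{The paper's route.} In the display above, the non-logarithmic part is analytic at $0$: for $s<6n$ one has $x^{8n-s}\mid e_s$, so $e_sx^{s-6n}$ is a polynomial. After subtracting the residue, only $-e_{6n}\log x/(z-x)^{13n+1}=O(x^{2n}\log x)$ remains.

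\emph{A direct estimate.} The semicircle and the right segment of $\Gamma_x$ stay at distance at least $\rho$ from the pole and depend smoothly on $x$ up to $x=0$. On the left segment write $y=-v$ with $0\le v\le\rho-a$; there $\Delta=x+(x+4)v\ge4v$. The mean value theorem then gives
\[
 \Bigl|\frac{v^{8n}(1+v)^{5n}}{(x+(x+4)v)^{6n+1}}-\frac{v^{8n}(1+v)^{5n}}{(4v)^{6n+1}}\Bigr|\le C_n\,x\,v^{2n-2},
\]
which is integrable for $n\ge1$. The $x$-dependent endpoint $\rho-a$ contributes only a further $O(x)$. Together with your residue bound $R=O(x^{2n})$, this yields $J_n(x)=c_n+O(x)$.
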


\begin{proof}
The identity \eqref{q:eq:F-I} follows directly from
\eqref{q:eq:I0}--\eqref{q:eq:I}, Lemma \ref{q:lem:continuation}, and the factor
$z^{-l-m}=(-4)^{-15n}$.

For a real expression, put $z=-4$ and make the substitution
$u=x+(z-x)y$.  If
\[
 (u-x)^{8n}(z-u)^{5n}=\sum_{s=0}^{13n}e_s(x,z)u^s,
\]
then integration of the monomials and subtraction of the $\Log z$ residue
give
\begin{equation}\label{q:eq:J-real}
 J_n(x)=\frac1{(z-x)^{13n+1}}
 \left(
 -e_{6n}(x,z)\log x
 +\sum_{s\ne6n}e_s(x,z)
   \frac{z^{s-6n}-x^{s-6n}}{s-6n}
 \right).
\end{equation}
The expression is real for $0<x<1$.

The inner integral before the residue subtraction is, up to a nonzero
constant factor,
\begin{equation}\label{q:eq:hypergeom-inner}
 z^{-6n-1}\Bfun(5n+1,8n+1)
 {}_2F_1\left(6n+1,5n+1;13n+2;1-\frac{x}{z}\right).
\end{equation}
The Gauss equation has exponents $0$ and $2n$ at $x=0$; consequently the
logarithmic coefficient in \eqref{q:eq:J-real} is $O(x^{2n})$, and its rational
part is analytic at zero.  Its constant term is obtained either from
\eqref{q:eq:J-real} or by a binomial expansion:
\begin{align*}
 J_n(0)
 &=z^{-6n-1}\sum_{b=0}^{5n}
   \frac{(-1)^b\binom{5n}{b}}{2n+b}\\
 &=z^{-6n-1}\Bfun(2n,5n+1)
 =z^{-6n-1}\frac{(2n-1)!(5n)!}{(7n)!}.
\end{align*}
Taking $z=-4$ and multiplying by $x^{8n}$ proves
\eqref{q:eq:g-zero}--\eqref{q:eq:cn}.
\end{proof}

\subsection{A fixed-contour exponential estimate}\label{q:sec:analytic}

The point of the fixed contour is that only an upper bound is needed.  Put
\begin{equation}\label{q:eq:fdef}
 f(x,y)=\frac{x^8(1-x)^{10}y^8(1-y)^5}{\Delta(x,y)^6}.
\end{equation}
The integrand contributing to $F_{n,r}$ is
\[
 (1-x)^r\frac{f(x,y)^n}{\Delta(x,y)}.
\]
We first control the two straight parts of $\Gamma_x$ by their adjacent
circular endpoints.

\begin{lemma}\label{q:lem:straight}
For fixed $x\in(0,1]$, the absolute value of the full $y$-dependent integrand
is increasing on the left straight segment of $\Gamma_x$ and decreasing on
the right straight segment.  Hence its maximum on either segment occurs at
the endpoint on $|y-a|=\rho$.
\end{lemma}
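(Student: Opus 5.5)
We need to show that the absolute value of the $y$-integrand $|y^{8n}(1-y)^{5n}/\Delta(x,y)^{6n+1}|$, times factors independent of $y$, is increasing on $[0,a-\rho]$ and decreasing on $[a+\rho,1]$. One caveat first: $a-\rho<0$, since $a\le 1/5<\rho=2/7$. So the left straight segment actually runs along the real axis from $0$ to the negative number $a-\rho$, and "increasing" must be read in the direction of travel, moving away from $0$ toward $a-\rho$.

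The plan is to work with the logarithm of the modulus on the real line. On each real segment, $\Delta(x,y)=x-(x+4)y=-(x+4)(y-a)$ is real and nonzero, because the segments avoid $y=a$. Up to an additive constant in $y$, the log-modulus is
\[
L(y)=8n\log|y|+5n\log|1-y|-(6n+1)\log|y-a|.
\]
Its derivative is
\[
L'(y)=\frac{8n}{y}-\frac{5n}{1-y}-\frac{6n+1}{y-a}.
\]
It suffices to establish sign properties of $L'$ on each segment: $L'<0$ for $y\in[a-\rho,0)$, so the modulus grows as $y$ decreases from $0$, and $L'<0$ for $y\in[a+\rho,1)$, so the modulus decreases toward $1$.

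\emph{Right segment, $a+\rho\le y<1$.} Here $y>0$ and $y-a\ge\rho$. Combine the first and third terms:
\[
\frac{8n}{y}-\frac{6n+1}{y-a}=\frac{8n(y-a)-(6n+1)y}{y(y-a)}=\frac{(2n-1)y-8na}{y(y-a)}.
\]
This alone need not be negative, so the $-5n/(1-y)$ term must be used. Multiply $L'$ by $y(y-a)(1-y)>0$ to get the quadratic-type expression
\[
\bigl((2n-1)y-8na\bigr)(1-y)-5ny(y-a).
\]
Showing this is negative for $y\ge a+\rho$, $0\le a\le 1/5$, $\rho=2/7$ is the main step. The worst cases are $a=0$ (with $y\ge 2/7$) and $a=1/5$. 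At $a=0$ the expression is $y\bigl((2n-1)(1-y)-5ny\bigr)$, which is negative iff $y>(2n-1)/(7n-1)$. Since $(2n-1)/(7n-1)<2/7$, this holds, and this is exactly why $\rho=2/7$ was chosen. For general $a$ the bound is a linear-in-$n$ quadratic inequality in $y$ over a compact parameter box. The plan is to divide by $n$, bound the $n$-independent remainder, and check the endpoint $y=a+\rho$ together with monotonicity in $y$, noting the expression is concave in $y$ with negative leading coefficient $-(7n-1)$. Evaluation at $y=a+\rho$ then reduces to a polynomial inequality in $a\in[0,1/5]$ that can be verified directly.

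\emph{Left segment, $a-\rho\le y<0$.} Here $y<0$, $1-y>1$ and $y-a<0$. Multiply $L'$ by $y(y-a)>0$:
\[
8n(y-a)-(6n+1)y-\frac{5ny(y-a)}{1-y}.
\]
The first two terms give $(2n-1)y-8na<0$, and the last term is negative as well. Hence $L'<0$ throughout, with no real obstacle.

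The main obstacle is the right-segment inequality uniformly in $a\in(0,1/5]$ and $n\ge1$ at the endpoint $y=a+\rho$. I would handle it by clearing denominators and certifying the resulting two-variable polynomial sign exactly. The leftover point $x=0$, where $a=0$, is covered by the explicit computation above.
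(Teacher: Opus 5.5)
Your setup is the same as the paper's. On each real segment you take the logarithmic derivative of $|y|^{8n}|1-y|^{5n}|y-a|^{-6n-1}$. Your left-segment argument, where both pieces of $L'\cdot y(y-a)$ are negative, is the paper's computation written with $v=-y$.

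The right segment is the only real step, as you say, but you leave it as a plan rather than proving it. It closes in two lines. Your cleared expression is
\[
 \bigl((2n-1)y-8na\bigr)(1-y)-5ny(y-a)=n\,q_a(y)-y(1-y),\qquad q_a(y)=-7y^2+(2+13a)y-8a .
\]
Since $y(1-y)>0$ on this segment, it suffices to show $q_a(y)\le0$ for $y\ge a+\tfrac27$. This holds because
\[
 q_a\bigl(a+\tfrac27\bigr)=6a^2-\tfrac{44}{7}a\le0 .
\]
Also the vertex $(2+13a)/14$ lies to the left of $a+\tfrac27$, by the margin $(a+2)/14$. So the concave $q_a$ is decreasing on the whole segment.

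Two cautions about your sketch. Concavity plus a negative endpoint value does not suffice on its own; the vertex location is what makes it work. Your claim that $a=0$ and $a=1/5$ are the worst cases is unjustified, and it is not needed.

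The paper's route is the one your ``divide by $n$'' plan leads to. It handles the $n$th-power part $y^8(1-y)^5/(y-a)^6$, whose log-derivative has the $n$-independent sign of $q_a$. It then notes separately that the extra factor $(y-a)^{-1}$ is decreasing. This keeps the key quadratic free of $n$.
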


\begin{proof}
On the left write $y=-v$, $0<v\le\rho-a$.  Apart from factors independent of
$v$, the absolute value is
\[
 \frac{v^{8n}(1+v)^{5n}}{(a+v)^{6n+1}}.
\]
Its logarithmic derivative satisfies
\[
 \frac{8n}{v}+\frac{5n}{1+v}-\frac{6n+1}{a+v}
 \ge \frac{2n-1}{v}+\frac{5n}{1+v}>0.
\]

On the right, the logarithmic derivative of the $n$th-power part
$y^8(1-y)^5/(y-a)^6$ has the sign of
\[
 q_a(y)=-7y^2+(2+13a)y-8a.
\]
With $b=a+2/7$,
\[
 q_a(b)=6a^2-\frac{44}{7}a\le0,
 \qquad
 b-\frac{2+13a}{14}=\frac17+\frac a{14}>0.
\]
Thus the vertex of the concave quadratic lies to the left of $b$, so
$q_a(y)\le0$ for $y\ge b$.  The additional factor $(y-a)^{-1}$ is also
decreasing, which proves the claim.
\end{proof}

On the circle, put $U=|y|^2$ and $V=|1-y|^2$.  Since $|y-a|=\rho$,
\begin{equation}\label{q:eq:UVidentity}
 (1-a)U+aV=a(1-a)+\rho^2=:T.
\end{equation}
Weighted arithmetic--geometric mean gives
\begin{equation}\label{q:eq:weighted-AMGM}
 U^8V^5
 \le \frac{8^85^5}{13^{13}}\frac{T^{13}}{(1-a)^8a^5}.
\end{equation}
Substitution of \eqref{q:eq:delta-a-rho} into \eqref{q:eq:weighted-AMGM} yields
\begin{equation}\label{q:eq:Fcal}
 |f(x,y)|^2\le\mathcal F(x),
\end{equation}
where
\begin{equation}\label{q:eq:Fcal-explicit}
 \boxed{
 \mathcal F(x)=
 \frac{2^{22}5^5}{13^{13}7^{14}}
 \frac{x^{11}(1-x)^{20}(x^2+57x+16)^{13}}{(x+4)^{25}}.}
\end{equation}
Let
\begin{equation}\label{q:eq:Bdef}
 \mathcal B=\max_{0\le x\le1}\sqrt{\mathcal F(x)}.
\end{equation}
On the circle, $|\Delta|=(x+4)\rho\ge4\rho$.  Lemma
\ref{q:lem:straight} transfers the same endpoint bound to the straight pieces.
The total length of $\Gamma_x$ is $1-2a+\pi\rho\le1+\pi\rho$.  Therefore
\begin{equation}\label{q:eq:Fbound}
 |F_{n,r}|\le C_0\mathcal B^n\qquad(r=0,1,2),
\end{equation}
with the fixed constant
\begin{equation}\label{q:eq:C0}
 C_0=\frac{1+\pi\rho}{4\rho}
     +\frac{|\log4+i\pi|}{4}.
\end{equation}

We next give a deliberately coarse rational bound for $\mathcal B$.

\begin{lemma}\label{q:lem:B-rational}
One has
\begin{equation}\label{q:eq:Bsimple}
  \mathcal B<7\cdot10^{-11}.
\end{equation}
\end{lemma}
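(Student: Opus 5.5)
Since $\mathcal B^2=\max_{[0,1]}\mathcal F$, the claim is equivalent to $\mathcal F(x)<49\cdot10^{-22}$ for all $x\in[0,1]$. The plan is to bound the $x$-dependent factor of \eqref{q:eq:Fcal-explicit} crudely by elementary monotone estimates and to check the resulting rational inequality exactly.

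Write $\mathcal F(x)=K\,\phi(x)$ with
\[
K=\frac{2^{22}5^5}{13^{13}7^{14}},\qquad
\phi(x)=\frac{x^{11}(1-x)^{20}(x^2+57x+16)^{13}}{(x+4)^{25}}.
\]
On $[0,1]$ we have $(x+4)^{25}\ge4^{25}$ and $x^2+57x+16\le 58x+16$, since $x^2\le x$. This gives
\[
\phi(x)\le 4^{-25}\,x^{11}(1-x)^{20}(58x+16)^{13}.
\]
The factor $(58x+16)^{13}$ is increasing, and $x^{11}(1-x)^{20}$ is maximized at $x=11/31$. Bounding these two separately, or more sharply by splitting $[0,1]$ into a few subintervals and using monotonicity on each, yields an explicit rational upper bound. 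Each such bound will be compared with $49\cdot10^{-22}/K$ by exact rational arithmetic.

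A rough size check suggests the margin is adequate. At $x=11/31$, $x^{11}(1-x)^{20}\approx 1.3\times10^{-9}$. If the crude bound is too lossy, I will refine it: splitting $[0,1]$ into intervals $[x_i,x_{i+1}]$ and, on each, bounding increasing factors by their right-endpoint values and decreasing factors by their left-endpoint values. On each piece, $x^{11}$ and $x^2+57x+16$ are increasing, while $(1-x)^{20}$ and $(x+4)^{-25}$ are decreasing, so every piece bound is rigorous. A finite mesh then gives a certified bound.

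The main obstacle is making the product estimate tight enough. The function has a sharp peak, near $x\approx0.35$ after accounting for the growth of the quadratic factor, and decoupling the factors loses a lot. I would therefore first locate the maximum via the logarithmic derivative,
\[
\frac{11}{x}-\frac{20}{1-x}+\frac{13(2x+57)}{x^2+57x+16}-\frac{25}{x+4}=0,
\]
and evaluate $\mathcal F$ there numerically, expecting a value near $\mathcal B\approx 6\times10^{-11}$ or less. I would then choose a mesh fine enough around the peak, with perhaps twenty or so intervals, that the endpoint bounds stay below $7\cdot10^{-11}$. Alternatively, clearing denominators turns $\mathcal F(x)<49\cdot10^{-22}$ into positivity of an explicit integer polynomial on $[0,1]$, which can be certified by Sturm sequences or interval arithmetic.
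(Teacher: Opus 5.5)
Your approach is correct, but it handles the location of the maximum differently from the paper.

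The paper never covers all of $[0,1]$. It shows that on $(0,1)$ the sign of $\mathcal F'$ is the sign of the quartic $p(x)=-32x^4-1299x^3-10037x^2+3264x+704$. Descartes' rule of signs and $p(0)>0>p(1)$ then give a unique maximizer, which it brackets as $0.4531215<x_*<0.4531216$. On that single interval of width $10^{-7}$ it applies exactly your monotone endpoint bound, so one integer cross-multiplication finishes the proof. Your global mesh, or a Sturm sequence for the degree-$57$ polynomial obtained by clearing denominators, needs no critical-point analysis, but it produces a much larger certificate.

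Your numerical guesses are off in a way that matters for the mesh:
\begin{itemize}
\item The maximizer is at $x_*\approx0.453$, not $0.35$.
\item $\mathcal B\approx6.85\cdot10^{-11}$, so $\mathcal F(x_*)$ lies only about $4\%$ below $49\cdot10^{-22}$.
\item Near $x_*$, an interval of width $h$ costs a factor of about $e^{42h}$; the constant $42$ is the value there of $11/x+13(2x+57)/(x^2+57x+16)$, the logarithmic derivative of the increasing factors.
\item You therefore need $h\lesssim10^{-3}$ near the peak, and of the order of a hundred graded intervals rather than twenty.
\item Your first, decoupled bound using $(x+4)^{25}\ge4^{25}$ overshoots by a factor of roughly $4\cdot10^{4}$.
\end{itemize}
Because the inequality is strict, refining the mesh until the bounds hold does succeed. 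What the paper's localization buys is that the whole check reduces to a single evaluation.
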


\begin{proof}
The sign of $\mathcal F'(x)$ on $(0,1)$ is the sign of
\begin{equation}\label{q:eq:quartic}
 p(x)=-32x^4-1299x^3-10037x^2+3264x+704.
\end{equation}
Descartes' rule of signs shows that $p$ has exactly one positive root; since
$p(0)>0>p(1)$, that root is the unique maximizer of $\mathcal F$ in $(0,1)$.
Direct rational evaluation gives
\begin{equation}\label{q:eq:xbracket}
 a_0:=\frac{4531215}{10^7}<x_*<
 b_0:=\frac{4531216}{10^7}.
\end{equation}
Because the increasing and decreasing factors in \eqref{q:eq:Fcal-explicit}
are monotone on this interval,
\begin{equation}\label{q:eq:Ubound}
 \mathcal B^2\le U:=
 \frac{2^{22}5^5}{13^{13}7^{14}}
 \frac{b_0^{11}(1-a_0)^{20}(b_0^2+57b_0+16)^{13}}
      {(a_0+4)^{25}}.
\end{equation}
A direct integer cross-multiplication gives
\[
 U<\frac{49}{10^{22}},
\]
which is equivalent to \eqref{q:eq:Bsimple}.
\end{proof}

\subsection{The common arithmetic divisor}\label{q:sec:arithmetic}

Set
\begin{equation}\label{q:eq:theta}
  \theta=\Li_2(-1/4).
\end{equation}
Continuation of the polynomial identity \eqref{q:eq:common-rv} along
\eqref{q:eq:zpath}, followed by Proposition \ref{q:prop:F-I}, gives
\begin{equation}\label{q:eq:predivisor}
 d_{13n+2}d_{12n+2}4^n5^{3n}F_{n,r}
 \in\Z+\Z\theta,
 \qquad r=0,1,2.
\end{equation}
The power of $4$ in \eqref{q:eq:predivisor} is worth displaying explicitly:
\begin{equation}\label{q:eq:scaling-one-line}
 \frac{(-4)^{16n}(-5)^{3n}}{(-4)^{15n}}
 =4^n5^{3n}.
\end{equation}

We now use the factorial savings of \cite[\S\S3--4]{RV05}.  For
$w\in\mathbb R$ define
\[
 H=\lfloor10w\rfloor,\quad J=K=\lfloor8w\rfloor,
 \quad L=\lfloor5w\rfloor,\quad M_0=\lfloor10w\rfloor,
\]
\[
 A=\lfloor7w\rfloor,\quad B_0=\lfloor12w\rfloor,
 \quad C=\lfloor13w\rfloor,\quad D=\lfloor6w\rfloor,
\]
and
\begin{equation}\label{q:eq:e-def}
\begin{split}
 e(w)=\max\{&0,
 H+J-B_0-D,
 H+M_0-C-A,
 K+L-A-D,\\
 &H+J+K-C-A-D,
 H+L+M_0-B_0-D-A\}.
\end{split}
\end{equation}
Every expression in the maximum is balanced, so $e(w+1)=e(w)$.

We use the same factorial divisor at the integer point $z=-4$.  The transformation formulae originate in \cite[\S\S3--4]{RV05}, and \cite[\S2]{RV19} proves that the polynomial identities, the $\Phi$-invariance and the resulting divisor continue unchanged to $z<0$.  Thus no new arithmetic input is introduced at this point; the calculation below is the specialization of the published Rhin--Viola divisor to the chosen ray.

\begin{lemma}[Specialized Rhin--Viola divisor]\label{q:lem:rv-divisor}
For the unshifted member $\boldsymbol\nu_{n,0}$ and any prime $p^2>13n$,
both integer coefficients of the Rhin--Viola linear form are divisible by
$p^{e(n/p)}$.  Moreover $e$ takes only the values $0,1,2$.
\end{lemma}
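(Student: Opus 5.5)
The plan has two parts: obtain the divisibility from the published Rhin--Viola factorial transformations, and then bound the range of $e$ by an elementary floor-function estimate.

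\emph{Divisibility.} I would recall how the divisor arises in \cite[\S\S3--4]{RV05}. Each transformation $\varphi$ in the group $\Phi$ maps the tuple $\boldsymbol\nu=(h,j,k,l,m)$ to a tuple $\varphi(\boldsymbol\nu)$. The two integrals are related by
\[
 I_z(\boldsymbol\nu)=\frac{\prod(\text{factorials of }\boldsymbol\nu)}{\prod(\text{factorials of }\varphi(\boldsymbol\nu))}\,I_z(\varphi(\boldsymbol\nu)),
\]
where each factorial ratio is a product of quotients of the form $a!\,b!/(c!\,d!)$ with $a+b=c+d$. Both sides of \eqref{q:eq:rv-specialized} carry the same normalization $d_Md_N$, because $M,N$ are $\Phi$-invariant. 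So for every $\varphi$, the integer coefficients $P,Q$ of the form attached to $\boldsymbol\nu_{n,0}$ equal a rational factorial quotient times integer coefficients. Hence their $p$-adic valuation is at least $v_p$ of that quotient, whenever the quotient's valuation is nonnegative. By \cite[\S2]{RV19}, this continues to $z=-4$.

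\emph{Valuations via Legendre.} For $p^2>13n$, all the factorials involved have arguments $\le 13n<p^2$. Legendre's formula then reduces to $v_p(a!)=\lfloor a/p\rfloor$. Setting $w=n/p$, the entries $10n,8n,\dots$ give $\lfloor 10n/p\rfloor=\lfloor10w\rfloor$, and so on. Each of the five nonzero expressions in \eqref{q:eq:e-def} is then $v_p$ of one explicit factorial quotient attached to a specific element of $\Phi$. I would identify these quotients using the set \eqref{q:eq:Sset}, whose entries are $(10,8,8,5,10,7,12,13,6)\,n$. For example, $H+J-B_0-D$ corresponds to $(10n)!(8n)!/((12n)!(6n)!)$, and the three-term expressions to products of two such quotients. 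The main obstacle is this matching: checking that each of the five quotients is exactly the ratio produced by some group element. I would first try reading them off from the Rhin--Viola table of generators, with compositions giving the three-term entries. Taking the maximum over $\varphi$, together with $0$ for the identity, gives $p^{e(n/p)}$.

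\emph{Values of $e$.} Each expression is balanced, since $10+8=12+6$, $10+10=13+7$, $8+5=7+6$, $10+8+8=13+7+6$ and $10+5+10=12+6+7$. Writing $\lfloor cw\rfloor=cw-\{cw\}$ turns every expression into a signed sum of fractional parts. A two-against-two expression is then less than $2$, hence at most $1$. A three-against-three expression is less than $3$, and it is an integer. To exclude the value $3$, I would check over $w\in[0,1)$ that it never equals $3$, for instance by showing that $H+J+K-C-A-D\le 2$ on each of the finitely many breakpoint intervals with denominators dividing $\lcm(5,6,7,8,10,12,13)$. Periodicity $e(w+1)=e(w)$ reduces everything to this finite check, which I would do exactly (it can be certified by computer). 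Since $e\ge0$ by definition, $e\in\{0,1,2\}$.
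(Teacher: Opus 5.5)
Your proposal is correct, but it reaches the lemma by a more self-contained route than the paper does. The paper does not re-derive the divisor. It identifies $D_1,\dots,D_5$ with the defects in \cite[(4.10)]{RV05}, and uses $D_4=D_2+(J+K-M_0-D)$ and $D_5=D_1+(L+M_0-J-A)$ to obtain $e=\mathbf 1_{\Omega}+\mathbf 1_{\Omega'}$, which also gives $e\le2$. It then quotes \cite[Lemma 4.1]{RV05}. You instead rebuild that lemma from the factorial identities and Legendre's formula. This is the same coefficient-transfer mechanism the paper makes explicit only in Section~\ref{sec:fifth}.

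The matching step you flag as the main obstacle does go through. Take $\sigma\colon(h,j,k,l,m)\mapsto(h,m,l,k,j)$ and $\tau\colon(h,j,k,l,m)\mapsto(m+h-k,j+k-m,m,l,k)$. The orbit of $(10,8,8,5,10)n$ then has twelve elements. With $T=h!\,j!\,k!\,l!\,m!$, the quotients $T(\boldsymbol\nu_{n,0})/T(s)$ have $p$-adic valuations $0,D_1,\dots,D_5$, each attained twice:
\begin{itemize}
\item $\tau$ gives $(12,6,10,5,8)n$ and $D_1$;
\item $\tau\sigma$ gives $(13,7,8,8,5)n$ and $D_2$;
\item $\tau\sigma\tau$ gives $(13,7,6,10,5)n$ and $D_4$;
\item $\tau\sigma\tau\sigma$ gives $(12,6,7,8,8)n$ and $D_5$;
\item the orbit element $(10,8,7,6,10)n$, the 2019 Rhin--Viola tuple, gives $D_3$.
\end{itemize}
Besides $M,N$, the exponents $\alpha=\max\{j+k,k+l,l+m\}$ and $\beta=\max\{0,k+l-h\}$ are also invariant under $\sigma$ and $\tau$. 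So the transfer is an identity of polynomials in $z$, which is what you need.

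Your proposed finite check for the bound on $e$ is superfluous. A balanced three-against-three floor expression equals a sum of three fractional parts minus a nonnegative quantity, so it is $<3$. Being an integer, it is already $\le2$.

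What each route buys: the paper's gives a literal identification with the published divisor, which supports its claim that no new arithmetic enters at $-1/4$. Yours is checkable from the two generators alone, at the cost of a twelve-element orbit computation.
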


\begin{proof}
For the base ray $(10,8,8,5,10)$, the four complementary parameters in
\eqref{q:eq:Sset} are
\[
 l+m-j=7,\qquad m+h-k=12,\qquad h+j-l=13,
 \qquad j+k-m=6.
\]
Write the five nonzero candidates in \eqref{q:eq:e-def} as
$D_1,\ldots,D_5$ in their displayed order.  They are exactly the
right-minus-left defects in the five inequalities of
\cite[(4.10)]{RV05}.  Moreover
\[
 D_4=D_2+\bigl(J+K-M_0-D\bigr),\qquad
 D_5=D_1+\bigl(L+M_0-J-A\bigr).
\]
Each parenthesized term is at most $1$.  Thus $D_4$ or $D_5$ can equal $2$
precisely when one of the paired conditions in \cite[(4.11)]{RV05}
holds.  In the notation of that paper,
\begin{equation}\label{q:eq:e-Omega}
  e(w)=\mathbf1_{\Omega}(\{w\})+\mathbf1_{\Omega'}(\{w\}).
\end{equation}
The conclusion is therefore exactly \cite[Lemma 4.1]{RV05}, which is
obtained from the five factorial transformation identities at the end of its
Section~3.
\end{proof}

The use of three shifted tuples requires one small stability argument.

\begin{lemma}[Bounded-shift stability]\label{q:lem:shift-stability}
Let
\begin{equation}\label{q:eq:En}
 E_n=\prod_{v\in\{10,12,13\}}(vn+1)(vn+2).
\end{equation}
If $p\nmid E_n$, then for each $v\in\{10,12,13\}$ and
$r\in\{0,1,2\}$,
\begin{equation}\label{q:eq:floor-stability}
 \left\lfloor\frac{vn+r}{p}\right\rfloor
 =\left\lfloor\frac{vn}{p}\right\rfloor.
\end{equation}
Consequently, for $p^2>13n+2$ and $p\nmid E_n$, the factor
$p^{e(n/p)}$ divides both integer coefficients in \eqref{q:eq:predivisor}
for all three shifts $r=0,1,2$.
\end{lemma}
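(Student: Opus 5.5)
The plan has two parts: an elementary floor-function identity, and then a transfer of Lemma~\ref{q:lem:rv-divisor} from the unshifted tuple to the shifts. Both rest on the fact that the Rhin--Viola exponent of $p$ for a tuple depends only on the floors $\lfloor s/p\rfloor$ for $s\in\mathcal S$.

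\emph{The floor identity.} Write $vn=qp+t$ with $0\le t\le p-1$. The floor of $(vn+r)/p$ exceeds $q$ only if $t+r\ge p$, which for $r\le2$ means $t\in\{p-1,p-2\}$. Then $p$ divides $vn+1$ or $vn+2$, and hence divides $E_n$. So if $p\nmid E_n$, then \eqref{q:eq:floor-stability} holds for all $r\in\{0,1,2\}$. It is also worth recording that $p\nmid E_n$ forces $p\ne 2$ and $p\nmid 13n+1,13n+2$, although only the floor statement is used below.

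\emph{Transfer to the shifts.} By Proposition~\ref{q:prop:rv-specialized}, the set $\mathcal S$ for $\boldsymbol\nu_{n,r}$ is
\[
 \{10n+r,8n,8n,5n,10n,7n,12n+r,13n+r,6n\}.
\]
The entries that move with $r$ are exactly $vn+r$ for $v\in\{10,12,13\}$. Replacing $w=n/p$ by the actual entries divided by $p$, the floors entering \eqref{q:eq:e-def} are $H=\lfloor(10n+r)/p\rfloor$, $B_0=\lfloor(12n+r)/p\rfloor$ and $C=\lfloor(13n+r)/p\rfloor$, with the others unchanged. By \eqref{q:eq:floor-stability} these agree with the $r=0$ values. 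The factorial transformation identities of \cite[\S\S3--4]{RV05}, continued to $z<0$ by \cite[\S2]{RV19}, bound the $p$-adic valuation of the denominator-cleared coefficients by the same floor differences for any tuple with these parameters. The proof of Lemma~\ref{q:lem:rv-divisor} never used $r=0$ beyond these floors, so it applies verbatim to $\boldsymbol\nu_{n,r}$: for each $r$ the integer coefficients of the form normalised by $d_{13n+r}d_{12n+r}$ are divisible by $p^{e(n/p)}$.

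\emph{Common normalisation.} The final step passes to the normalisation of \eqref{q:eq:predivisor}, where the extra factors are $d_{13n+2}/d_{13n+r}$, $d_{12n+2}/d_{12n+r}$, $(z-1)^r$ and the powers of $-4$. These are integers, so divisibility by $p^{e(n/p)}$ is preserved. The hypothesis $p^2>13n+2$ guarantees that $p$ occurs to the first power only in each $d$, which is the range where the Rhin--Viola saving is formulated.

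\emph{Main obstacle.} The delicate point is not the floor identity but justifying that the Rhin--Viola divisor is determined purely by the floors of the entries of $\mathcal S$. Concretely, one must check that each of the five transformed forms in \cite[(4.10)]{RV05} is normalised by the same pair $d_Md_N$ for all three shifts. Since $M=13n+r$ and $N=12n+r$ shift in lockstep, I would verify this by rewriting each defect $D_i$ in terms of the shifted floors. The reduction to \eqref{q:eq:e-Omega} then goes through unchanged.
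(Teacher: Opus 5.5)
Your proposal is correct and follows essentially the same route as the paper: a carry in $\lfloor (vn+r)/p\rfloor$ forces $p\mid vn+1$ or $p\mid vn+2$, only the three entries $10n+r,12n+r,13n+r$ of $\mathcal S$ depend on $r$ so the Rhin--Viola valuation count is unchanged, and passing to the larger common normalisation only adds divisibility. The point you flag as the main obstacle is settled by the fact that the two orbit generators permute the multiset $\mathcal S$. Hence every factorial and l.c.m. argument in the transformed forms, including $M=13n+r$ and $N=12n+r$, is one of these nine entries, which is exactly what the paper's phrase ``applies identically'' relies on.
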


\begin{proof}
The equality \eqref{q:eq:floor-stability} can fail only if a multiple of $p$
lies among $vn+1,vn+2$.  This proves the first assertion.  In the nine
factorial arguments
\[
 10n+r,8n,8n,5n,10n,7n,12n+r,13n+r,6n
\]
only the three displayed $r$-dependent terms change.  Thus the factorial
valuation proof of Lemma \ref{q:lem:rv-divisor} applies identically to all
three shifted tuples.  Enlarging the l.c.m. factors to those in
\eqref{q:eq:predivisor} can only add divisibility.
\end{proof}

A direct breakpoint calculation from \eqref{q:eq:e-def} gives the following
nonzero intervals of $e$ on $[0,1)$.  Endpoints are left-closed and
right-open.
\begin{table}[ht]
\centering
\caption{The nonzero intervals of the periodic divisor exponent $e$.}
\label{q:tab:eintervals}
\small
\begin{tabular}{@{}ccc@{}}
\toprule
left endpoint & right endpoint & $e$\\
\midrule
$1/10$ & $1/8$ & 1\\
$1/8$ & $1/7$ & 2\\
$1/7$ & $1/6$ & 1\\
$1/5$ & $2/7$ & 1\\
$3/10$ & $1/3$ & 1\\
$3/8$ & $5/13$ & 1\\
$2/5$ & $5/12$ & 2\\
$5/12$ & $3/7$ & 1\\
$1/2$ & $7/13$ & 1\\
$3/5$ & $2/3$ & 1\\
$7/10$ & $5/7$ & 1\\
$3/4$ & $10/13$ & 1\\
$4/5$ & $5/6$ & 2\\
$5/6$ & $11/13$ & 1\\
$9/10$ & $12/13$ & 1\\
\bottomrule
\end{tabular}
\end{table}

We deliberately retain only three periods of this correction.  Define
\begin{equation}\label{q:eq:Deltan}
 \Delta_n=
 \prod_{\substack{p\ \mathrm{prime},\ p^2>13n+2,\ p\nmid E_n,\\
                   0\le n/p<3}}
 p^{e(n/p)}.
\end{equation}
By Lemma \ref{q:lem:shift-stability}, division by $\Delta_n$ is valid for all
three forms.  Thus
\begin{equation}\label{q:eq:Lambda}
 \Lambda_n=\frac{d_{13n+2}d_{12n+2}}{\Delta_n}4^n5^{3n}
\end{equation}
and
\begin{equation}\label{q:eq:integer-forms}
 L_{n,r}:=\Lambda_nF_{n,r}\in\Z+\Z\theta,
 \qquad r=0,1,2.
\end{equation}

\begin{lemma}[Prime-density saving]\label{q:lem:prime-density}
One has
\begin{equation}\label{q:eq:C3limit}
 \lim_{n\to\infty}\frac1n\log\Delta_n=\mathcal C_3,
\end{equation}
where
\begin{align}
 \mathcal C_3
 &=\sum_{[a,b),\,e}\;e\sum_{q=0}^{2}
   \left(\frac1{q+a}-\frac1{q+b}\right)\label{q:eq:C3sum}\\
 &=\frac{1947661908206663}{242860520775600}
  =8.0196727816\ldots>8.\label{q:eq:C3value}
\end{align}
Here the first sum is over the intervals in Table \ref{q:tab:eintervals}.
\end{lemma}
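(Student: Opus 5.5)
The limit is a prime number theorem computation. We write
\[
 \log\Delta_n=\sum_{\substack{\sqrt{13n+2}<p\le n/\epsilon\\ p\nmid E_n}} e(n/p)\log p,
\]
where the condition $0\le n/p<3$ means $p>n/3$. So the primes involved lie in $(n/3,\infty)$. Since $e(w)=0$ for $w\in[0,1/10)$ by Table~\ref{q:tab:eintervals}, the primes with $n/p<1/10$, i.e.\ $p>10n$, contribute nothing. Hence only primes $p\in(n/3,10n]$ matter. For $n$ large these automatically satisfy $p^2>13n+2$, so that constraint is vacuous.

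Next we remove the condition $p\nmid E_n$. The integer $E_n$ is a product of six numbers, each at most $13n+2$, and each prime $p>n/3$ divides each factor at most once. Thus at most $6\cdot 40$ (a bounded number of) primes $p>n/3$ divide $E_n$. Each contributes at most $2\log(10n)$, so their total contribution is $O(\log n)=o(n)$.

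Now decompose by the integer part $q=\lfloor n/p\rfloor\in\{0,1,2\}$ and by the interval $[a,b)$ of Table~\ref{q:tab:eintervals} containing $\{n/p\}$. The condition $n/p\in[q+a,q+b)$ is equivalent to
\[
 p\in\Bigl(\frac{n}{q+b},\frac{n}{q+a}\Bigr].
\]
By the prime number theorem in the form $\theta(x)=x+o(x)$, the sum of $\log p$ over this interval equals $n\bigl(\frac1{q+a}-\frac1{q+b}\bigr)+o(n)$. The case $q=0$, $a=1/10$ needs no special care, since its upper endpoint $10n$ is finite. Summing over the finitely many pairs (interval, $q$) with weight $e$ gives \eqref{q:eq:C3sum}, because the intervals are disjoint and $e$ is constant on each of them. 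The endpoint conventions only affect $O(1)$ primes.

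The remaining step, and the only real obstacle, is the exact evaluation \eqref{q:eq:C3value}. I would do this by exact rational arithmetic: 15 intervals times 3 values of $q$ give 45 reciprocal differences. The common denominator divides $\lcm$ of numbers such as $q+a$ with denominators $5,6,7,8,10,12,13$. This is a finite certificate-checkable computation.

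I would also double-check the table itself against \eqref{q:eq:e-def} by evaluating $e$ at one interior point of each subinterval between consecutive breakpoints. Here the breakpoints are the fractions $k/v$ with $v\in\{5,6,7,8,10,12,13\}$. Finally, the inequality $>8$ follows immediately from the exact fraction.
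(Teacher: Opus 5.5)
Your proposal is correct and follows the paper's proof essentially step for step. Both restrict to $p>n/3$, so that $p^2>13n+2$ is eventually automatic, and split by the period $q\in\{0,1,2\}$ and the Table~\ref{q:tab:eintervals} interval. Both apply $\vartheta(X)\sim X$ on each range $\bigl(n/(q+b),\,n/(q+a)\bigr]$, discard the primes dividing $E_n$ at cost $O(\log n)$, and finish with exact rational addition.

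There are two cosmetic fixes. First, the bounded count of exceptional primes does not follow from multiplicities. It follows because a factor $\le 13n+2$ has at most one prime divisor exceeding $n/3$ once $(n/3)^2>13n+2$; alternatively, use the paper's bound $2\sum_{p\mid E_n}\log p\le 2\log E_n$. Second, the undefined $\epsilon$ in your first display should simply be dropped.
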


\begin{proof}
Fix one interval $[a,b)$ from Table \ref{q:tab:eintervals} and
$q\in\{0,1,2\}$.  The condition
\[
 q+a\le\frac np<q+b
\]
is equivalent to
\[
 \frac{n}{q+b}<p\le\frac{n}{q+a}.
\]
The prime number theorem in the form
$\vartheta(X)=\sum_{p\le X}\log p\sim X$ therefore gives the logarithmic
contribution
\[
 e\left(\frac1{q+a}-\frac1{q+b}\right)n+o(n).
\]
For these three periods the primes are bounded below by a positive multiple
of $n$, so the condition $p^2>13n+2$ is eventually automatic.  Excluding
primes dividing $E_n$ changes $\log\Delta_n$ by at most
\[
 2\sum_{p\mid E_n}\log p\le2\log E_n=O(\log n),
\]
since $e\le2$.  Summing proves \eqref{q:eq:C3limit}--\eqref{q:eq:C3sum}.
Exact rational addition gives \eqref{q:eq:C3value}.  For reference, the three
period contributions are
\[
 \frac{106501}{13860},\qquad
 \frac{373975549}{1487285800},\qquad
 \frac{44971243174667}{534293145706320}.
\]
\end{proof}

Since $\log d_N\sim N$, Lemma \ref{q:lem:prime-density} implies
\begin{equation}\label{q:eq:Lambda-root}
 \lim_{n\to\infty}\Lambda_n^{1/n}
 =4\cdot5^3\exp(25-\mathcal C_3)
 <500e^{17}.
\end{equation}

\subsection{Nonvanishing of three consecutive moments}\label{q:sec:nonvanishing}

It remains to rule out the simultaneous vanishing of the three shifted
forms.  This is done without any asymptotic saddle analysis.

\begin{lemma}[Differential equation]\label{q:lem:ode}
The function $g_n$ satisfies
\begin{equation}\label{q:eq:g-ode}
\begin{split}
 -x^2(x+4)g_n''
 &+x\bigl(72n-4+(5n-3)x\bigr)g_n'\\
 &-\bigl(320n^2+(2n-1)(3n-1)x\bigr)g_n=0.
\end{split}
\end{equation}
\end{lemma}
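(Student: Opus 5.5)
The plan is to show that $J_n$ satisfies a Gauss hypergeometric equation in the variable $t=1-x/z$ (with $z=-4$). Conjugating by the factor $x^{8n}$ then gives \eqref{q:eq:g-ode}. Both pieces of $J_n$ in \eqref{q:eq:Jdef} are integrals of the same Euler-type integrand
\[
 \omega(x,y)=\frac{y^{8n}(1-y)^{5n}}{\Delta(x,y)^{6n+1}}\dd y,
\]
one over $\Gamma_x$ and one over the closed circle $C_x$. I would therefore prove the statement for a general cycle: any such integral over a path whose boundary terms vanish satisfies one second-order operator in $x$.

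First, I would write down the Gauss operator for $a=6n+1$, $b=5n+1$, $c=13n+2$, which is the hypergeometric function in \eqref{q:eq:hypergeom-inner}. Then I would verify the standard Euler identity. Let $\mathcal L$ be that operator, rewritten in $x$ with $\partial_x$ acting on $\Delta^{-6n-1}$. Applying $\mathcal L$ to the integrand gives
\[
 \mathcal L\,\omega=\partial_y\bigl(R(x,y)\,y^{8n+1}(1-y)^{5n+1}\Delta^{-6n-2}\bigr)\dd y
\]
for an explicit rational $R$ (in fact a low-degree polynomial multiple). This is the usual derivation of the hypergeometric equation from Euler's integral. One can also check it by differentiating the right-hand side and matching numerators, which is a finite polynomial identity.

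On $C_x$ the exact derivative integrates to zero. On $\Gamma_x$ the primitive vanishes at $y=0$ and $y=1$ because of the factors $y^{8n+1}$ and $(1-y)^{5n+1}$, and for $0<x\le1$ the path avoids the pole $y=a$. Differentiation under the integral sign is legitimate because $\Gamma_x$ can be held locally fixed in $x$: the pole $a(x)$ stays at distance at least $\rho-1/5$ from the path. Hence $\mathcal L J_n=0$.

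Explicitly, with $t=1+x/4$ we have $t(1-t)=-x(x+4)/16$ and $\dd/\dd t=4\,\dd/\dd x$. Also $c-a-b-1=2n-1$ and $a+b+1=11n+3$. So the equation for $w=J_n$ becomes
\[
 -x(x+4)w''+\bigl(4(2n-1)-(11n+3)x\bigr)w'-(6n+1)(5n+1)w=0 .
\]
Finally, I would substitute $w=x^{-8n}g_n$ and multiply through by $-x^{8n+1}$ (note the sign change from the leading term). This uses
\[
 w'=x^{-8n}\bigl(g'-8n g/x\bigr),\qquad
 w''=x^{-8n}\bigl(g''-16n g'/x+8n(8n+1)g/x^2\bigr).
\]
Collecting terms gives the coefficient $x(72n-4+(5n-3)x)$ on $g'$. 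The constant term becomes $-320n^2$, and the $x$-coefficient becomes $-(2n-1)(3n-1)$. For example, the constant part is $-[4\cdot8n(8n+1)+4(2n-1)8n]=-320n^2$, which matches. The exponents $0$ and $2n$ at $x=0$ also agree with Proposition~\ref{q:prop:F-I}, which gives a consistency check.

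The main obstacle is the first step: producing the exact $y$-primitive $R$, and justifying that the moving contour and the $\Log$ residue term introduce no extra boundary contributions. If the explicit primitive proves messy, I would instead argue via \eqref{q:eq:J-real}. Each monomial term $e_s(x,z)(z^{s-6n}-x^{s-6n})/(s-6n)$, together with the $\log x$ term, comes from the substitution $u=x+(z-x)y$. The result is a linear combination of the two local Gauss solutions at $t=1$, and these are known to span the solution space.
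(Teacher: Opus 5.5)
Your proof is correct, but it reaches the lemma by a different route from the paper.

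\textbf{The paper's route.} It identifies the un-subtracted inner integral with a (continued) ${}_2F_1(6n+1,5n+1;13n+2;1-x/z)$, so that integral satisfies Gauss's equation. It then writes a local solution at $x=0$ as $R(x)+S(x)\log x$ and observes that the logarithmic coefficient $S$ is itself a solution. Identifying the residue integral with this coefficient shows that subtracting $(\log4+i\pi)$ times it preserves the equation.

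\textbf{Your route.} You verify directly that the Gauss operator applied to the integrand is an exact $y$-derivative. The classical Euler identity, after $y\mapsto1-y$, gives the constant multiple $(6n+1)z\,\partial_y\bigl(y^{8n+1}(1-y)^{5n+1}\Delta^{-6n-2}\bigr)$, so your $R$ is in fact a constant. Hence each of the two pieces of $J_n$ is annihilated separately: the primitive vanishes at the endpoints $0,1$ of $\Gamma_x$, and it is single-valued on $C_x$.

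\textbf{What each buys.} Your approach avoids two identifications the paper uses without detailed justification. The first is which branch of ${}_2F_1$ on $t=1+x/4>1$ the path $\Gamma_x$ realizes. The second is the equality of the residue integral with the $\log x$ coefficient. It also shows at once that every admissible cycle yields a solution. The paper's route is shorter once the hypergeometric identification is granted, and it connects naturally with the local exponents $0,2n$ used in Proposition~\ref{q:prop:F-I}.

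\textbf{Two small slips, neither affecting validity.} First, the pole is not at distance at least $\rho-1/5$ from the path. $\Gamma_x$ begins at $y=0$, and $a(x)=x/(x+4)$ is smaller than $3/35$ for $x<3/8$. Differentiation under the integral needs only uniformity for $x$ in compact subsets of $(0,1]$, and that holds for the fixed contour $\Gamma_{x_0}$. Second, to land exactly on \eqref{q:eq:g-ode} you should multiply by $+x^{8n+1}$, not $-x^{8n+1}$. The coefficients you then collect already correspond to the $+$ sign, and the overall sign is immaterial for a homogeneous equation.
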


\begin{proof}
As noted in \eqref{q:eq:hypergeom-inner}, before subtracting the residue the
inner integral is a constant multiple of
\[
 {}_2F_1\left(6n+1,5n+1;13n+2;1-\frac{x}{z}\right).
\]
The Gauss differential equation, written in the variable $x$, is
\begin{equation}\label{q:eq:J-ode-general}
 x(z-x)J''+\bigl(z(1-2n)-(11n+3)x\bigr)J'
 -(6n+1)(5n+1)J=0.
\end{equation}
Near $x=0$, a local solution has the form $R(x)+S(x)\log x$; substituting
this into the homogeneous equation shows that its logarithmic coefficient
$S$ is itself a solution.  The residue integral in \eqref{q:eq:Jdef} is exactly
this logarithmic coefficient, so the subtraction of the constant $\Log z$
times the residue preserves \eqref{q:eq:J-ode-general}.  Finally put $z=-4$
and conjugate the operator by $g_n=x^{8n}J_n$.  A direct simplification gives
\eqref{q:eq:g-ode}.
\end{proof}

\begin{lemma}[Moment recurrence]\label{q:lem:recurrence}
For every $H\ge2$,
\begin{equation}\label{q:eq:recurrence}
\begin{split}
0={}&-5H(H-1)\mu_{H-2}
 +H(11H+77n+4)\mu_{H-1}\\
&-\bigl(7H^2+82Hn+11H+326n^2+77n+5\bigr)\mu_H\\
&+(H+2n+1)(H+3n+1)\mu_{H+1}.
\end{split}
\end{equation}
\end{lemma}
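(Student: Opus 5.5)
The plan is to turn the differential equation of Lemma~\ref{q:lem:ode} into the moment recurrence by multiplying by $(1-x)^H$, integrating over $[0,1]$, and moving all derivatives onto the polynomial factor by integrating by parts. Put $t=1-x$ and write \eqref{q:eq:g-ode} as $P_2g_n''+P_1g_n'+P_0g_n=0$, where
\[
 P_2=-x^2(x+4),\qquad P_1=x\bigl(72n-4+(5n-3)x\bigr),\qquad P_0=-\bigl(320n^2+(2n-1)(3n-1)x\bigr).
\]
Substituting $x=1-t$ and $x+4=5-t$ makes these polynomials in $t$ of degrees $3,2,1$. In particular $P_2=-5+O(t)$, $P_1=(77n-7)+O(t)$ and $P_0=-(326n^2-5n+1)+O(t)$.

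\textbf{Step 1 (regularity and boundary terms).} First I would record the regularity of $g_n$. On $(0,1]$ the contour $\Gamma_x$ and the circle $C_x$ stay a fixed distance from the pole, so $J_n$ is real-analytic there. Near $x=0$, \eqref{q:eq:J-real} together with the Frobenius remark in the proof of Proposition~\ref{q:prop:F-I} gives $g_n=x^{8n}\bigl(A(x)+x^{2n}B(x)\log x\bigr)$ with $A,B$ analytic. Hence $g_n$ and $xg_n'$ are $O(x^{8n}\log x)$ and are integrable. Then, for $H\ge2$,
\[
 \int_0^1 t^H P_2g_n''\,dx=\Bigl[t^HP_2g_n'-(t^HP_2)'g_n\Bigr]_0^1+\int_0^1 (t^HP_2)''g_n\,dx,
\]
and similarly for the $P_1$ term. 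The boundary terms vanish for the following reasons. At $x=1$ the factors $t^H$ and $(t^H)'$ vanish because $H\ge2$. At $x=0$ the factor $x^2$ in $P_2$, the factor $x$ in $P_1$, and $g_n=O(x^{8n}\log x)$ kill everything. This is where the hypothesis $H\ge2$ is used.

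\textbf{Step 2 (adjoint expansion).} Next I would compute the adjoint expression
\[
 (t^HP_2)''-(t^HP_1)'+t^HP_0,
\]
with derivatives taken in $x$, so that $d/dx=-d/dt$. It is a polynomial of the form $t^{H-2}(\alpha_0+\alpha_1t+\alpha_2t^2+\alpha_3t^3)$. Integrating it against $g_n$ gives $\alpha_0\mu_{H-2}+\alpha_1\mu_{H-1}+\alpha_2\mu_H+\alpha_3\mu_{H+1}=0$. As a check, the lowest coefficient is $\alpha_0=-5H(H-1)$, coming only from $(t^H\cdot(-5))''$, and this matches \eqref{q:eq:recurrence}. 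The top coefficient comes from the $t^3$ part of $P_2$, namely $x^3\to -t^3$, together with the $t^2$ part of $P_1$ and the $t$ part of $P_0$. These should assemble into $(H+2n+1)(H+3n+1)$, whose roots reflect the local exponents $-2n$ and $-3n$ at $x=1$ shifted appropriately.

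\textbf{Main obstacle.} The main obstacle is purely the bookkeeping of $\alpha_1$ and $\alpha_2$, which collect contributions from all three $P_i$. I would expand each $P_i$ fully in $t$, differentiate termwise, and check the outcome against the two already-verified extreme coefficients. As a further check, I would confirm that the coefficients sum consistently with the $t=0$ evaluation of the operator. An exact symbolic or rational verification completes the identity.
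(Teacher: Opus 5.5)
Your proposal is correct and is essentially the paper's own argument: multiply \eqref{q:eq:g-ode} by $(1-x)^H$, integrate the $g_n''$ term by parts twice and the $g_n'$ term once, drop the boundary terms at $x=0$ by \eqref{q:eq:g-zero} and at $x=1$ by $H\ge2$ and regularity of $g_n$, then collect the four moments. Your expansions $P_2=-5+11t-7t^2+t^3$, $P_1=(77n-7)-(82n-10)t+(5n-3)t^2$ and $P_0=-(326n^2-5n+1)+(6n^2-5n+1)t$, inserted into your adjoint expression $(t^HP_2)''-(t^HP_1)'+t^HP_0$, do give exactly the four stated coefficients.

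Two side remarks are slightly off and should be corrected. First, the coefficient-sum check corresponds to evaluating at $t=1$ (i.e.\ $x=0$, where one gets $-320n^2-72n-4$), not at $t=0$. Second, $x=1$ is an ordinary point of \eqref{q:eq:g-ode}, so the roots $H=-2n-1,-3n-1$ of the top coefficient come from the exponents $2n-1,3n-1$ at $x=\infty$, not from anything at $x=1$.
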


\begin{proof}
Multiply \eqref{q:eq:g-ode} by $(1-x)^H$ and integrate over $0<x<1$.
Integrate the $g_n''$ term twice and the $g_n'$ term once.  At $x=0$ all
boundary terms vanish by \eqref{q:eq:g-zero}; at $x=1$ they vanish for
$H\ge2$, since $g_n$ and $g_n'$ are regular there.  Collecting the four
resulting moments gives \eqref{q:eq:recurrence}.
\end{proof}

\begin{proposition}[Three-shift nonvanishing]\label{q:prop:nonzero}
For every $n\ge1$,
\begin{equation}\label{q:eq:triple-nonzero}
  (F_{n,0},F_{n,1},F_{n,2})\ne(0,0,0).
\end{equation}
\end{proposition}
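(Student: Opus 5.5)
Suppose, for contradiction, that $F_{n,0}=F_{n,1}=F_{n,2}=0$, i.e.\ $\mu_{10n}=\mu_{10n+1}=\mu_{10n+2}=0$. The plan is to push this vanishing through the four-term recurrence of Lemma~\ref{q:lem:recurrence} in both directions. The coefficient of $\mu_{H+1}$ is $(H+2n+1)(H+3n+1)>0$ for every $H\ge0$. Taking $H=10n+2$, $10n+3,\dots$ therefore expresses each new moment through three consecutive zeros, so $\mu_H=0$ for all $H\ge 10n$. Running the recurrence downward is also possible, because the coefficient of $\mu_{H-2}$ is $-5H(H-1)\ne0$ for $H\ge2$. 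With $H=10n+1$ we get $\mu_{10n-1}=0$, and descending further we reach $\mu_H=0$ for all $H\ge0$. The recurrence is used only for $H\ge2$, which is exactly its range of validity; the last step, $H=2$, yields $\mu_0=0$.

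Next I will turn the vanishing of all moments into $g_n\equiv0$ on $(0,1)$. The moments $\mu_H=\int_0^1(1-x)^Hg_n(x)\,dx$ vanish for every $H\ge0$, so $\int_0^1 P(x)g_n(x)\,dx=0$ for every polynomial $P$. Integrability of $g_n$ on $[0,1]$ follows from \eqref{q:eq:J-real} and \eqref{q:eq:g-zero}: the function is continuous on $(0,1]$ and $O(x^{8n})$ at $0$, with only harmless logarithms. By Weierstrass density of polynomials in $C[0,1]$ (or uniqueness of the Hausdorff moment problem), $g_n=0$ almost everywhere. Since $g_n$ is real-analytic on $(0,1)$, being expressed through \eqref{q:eq:J-real}, it then vanishes identically on $(0,1)$.

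This contradicts \eqref{q:eq:g-zero}, which gives $g_n(x)=c_nx^{8n}+O(x^{8n+1})$ with $c_n\ne0$ from \eqref{q:eq:cn}; so for small $x>0$ we have $g_n(x)\ne0$. Hence the triple of forms cannot vanish simultaneously.

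The main point needing care is the downward propagation. I must confirm that the recurrence really holds for every $H\ge2$ with no additional boundary terms at $x=1$. That requires $g_n$ and $g_n'$ to be finite at $x=1$, which holds because $J_n$ is analytic near $x=1$ at $z=-4$: the point $x=1$ is an ordinary point of \eqref{q:eq:J-real}, since $z-x\ne0$. I must also check that the leading coefficient $-5H(H-1)$ never vanishes along the way. Once these are in place, the rest is routine.
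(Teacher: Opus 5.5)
Your proof is correct. Its first half, pushing $\mu_{10n}=\mu_{10n+1}=\mu_{10n+2}=0$ forward via the nonzero coefficient $(H+2n+1)(H+3n+1)$, is exactly the paper's.

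The two routes diverge at the contradiction step:
\begin{itemize}
\item The paper keeps only the forward tail. It uses $g_n=c_nx^{8n}+O(x^{8n+1})$ on $[0,1]$ (valid globally by boundedness of $g_n$) and the beta integral to get $\mu_H\sim c_n(8n)!\,H^{-(8n+1)}$. So the tail of the moment sequence cannot vanish.
\item You instead use uniqueness of the Hausdorff moment problem, via Weierstrass density, to force $g_n\equiv0$, contradicting $c_n\ne0$.
\end{itemize}
Your version needs only that $g_n$ is a nonzero integrable function on $[0,1]$, not its precise leading term at $0$. The paper's version avoids any density theorem and yields quantitative tail information (size and eventual sign of $\mu_H$).

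Your downward propagation is superfluous, though harmless given the lemma's stated range $H\ge2$. Vanishing of $\mu_H$ for all $H\ge10n$ already gives $\int_0^1(1-x)^{10n}P(x)g_n(x)\,dx=0$ for every polynomial $P$. Hence $(1-x)^{10n}g_n=0$ almost everywhere, and so $g_n\equiv0$ on $(0,1)$. This means you need neither $-5H(H-1)\ne0$ nor the recurrence at small $H$. Likewise, continuity of $g_n$ on $(0,1)$ suffices in place of real-analyticity.
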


\begin{proof}
The coefficient of $\mu_{H+1}$ in \eqref{q:eq:recurrence} is nonzero for every
$H\ge0$.  Thus if three consecutive moments
$\mu_h,\mu_{h+1},\mu_{h+2}$ vanished, applying \eqref{q:eq:recurrence} at
$H=h+2,h+3,\ldots$ would force every subsequent moment to vanish.

This is impossible.  With $n$ fixed, Proposition \ref{q:prop:F-I} gives
\[
 g_n(x)=c_nx^{8n}+O(x^{8n+1}),\qquad c_n\ne0.
\]
Hence, by the beta integral,
\begin{align*}
 \mu_H
 &=c_n\int_0^1x^{8n}(1-x)^H\dd x
   +O\!\left(\int_0^1x^{8n+1}(1-x)^H\dd x\right)\\
 &=c_n(8n)!\,H^{-(8n+1)}(1+O(H^{-1}))
 \qquad(H\to\infty).
\end{align*}
In particular the tail of the moment sequence is nonzero.  Taking $h=10n$
and recalling \eqref{q:eq:g-mu-F} proves \eqref{q:eq:triple-nonzero}.
\end{proof}

\subsection{Completion of the proof}\label{q:sec:completion}

Combining \eqref{q:eq:Fbound}, \eqref{q:eq:integer-forms} and
\eqref{q:eq:Lambda-root}, we have
\begin{equation}\label{q:eq:root-bound-pre}
 \limsup_{n\to\infty}
 \max_{0\le r\le2}|L_{n,r}|^{1/n}
 \le500e^{17}\mathcal B.
\end{equation}
The remaining numerical estimate can be made entirely rational.  The
standard series for $e$ gives
\[
 e<\sum_{k=0}^{6}\frac1{k!}
   +\frac1{7!}\frac1{1-1/8}
 =\frac{31967}{11760}
 <\frac{2719}{1000}.
\]
By Lemma \ref{q:lem:B-rational},
\begin{equation}\label{q:eq:final-exact}
 500e^{17}\mathcal B
 <500\left(\frac{2719}{1000}\right)^{17}
       \frac7{10^{11}}
 <\frac{17}{20}<1,
\end{equation}
where the last inequality is a direct integer cross-multiplication.
Therefore
\begin{equation}\label{q:eq:Ltozero}
  \max_{0\le r\le2}|L_{n,r}|\longrightarrow0.
\end{equation}

\begin{proof}[Proof of the $-1/4$ assertion in Theorem~\ref{thm:main}]
Suppose that $\theta=p/q\in\Q$ with $p\in\Z$ and $q\ge1$.  By
\eqref{q:eq:integer-forms}, each $qL_{n,r}$ is an integer.  Proposition
\ref{q:prop:nonzero} shows that for every $n$ at least one of the three numbers
$L_{n,r}$ is nonzero.  On the other hand, \eqref{q:eq:Ltozero} implies that for
all sufficiently large $n$ all three have absolute value $<1/q$.  Hence one
of the $qL_{n,r}$ is a nonzero integer of absolute value $<1$, a contradiction.
\end{proof}

\begin{remark}
The use of three shifts has only one purpose: it removes the need to prove
that a single oscillatory integral is nonzero for every $n$.  The arithmetic
cost of the shifts is $o(n)$, because the only potentially changed factorial
floors are excluded by the polynomial $E_n$ in \eqref{q:eq:En}.
\end{remark}

\section{The RV--VZ crossbreed at $1/5$}\label{sec:fifth}
The arithmetic step in this section is the first genuinely new construction in the progression: the Rhin--Viola divisor is applied term by term inside two binomial expansions of the same six-parameter Viola--Zudilin form.
\subsection{The linear forms and their normalization}\label{f:sec:construction}
Put $d_m=\lcm(1,\ldots,m)$ for $m\ge1$, and $d_0=1$.
We use the notation $J_z(h,j,k,l,m,q)$ of Viola--Zudilin~\cite{VZ18}, with the normalization in their Section~2. All references to equation and lemma numbers in~\cite{VZ18} use the MPIM preprint version. For fixed $n\geq1$ and integer $H\geq n$, define
\[
 F_n(H)=J_5(H,6n,4n,4n,5n,3n),\qquad
 J_{n,r}=F_n(6n+r)\quad(0\leq r\leq5).
\]
Thus $F_n(H)=F_n^{(0)}(H)-(\log5)F_n^{(1)}(H)$, and
\begin{align}\label{f:eq:real-original}
 F_n^{(0)}(H)
 &=5^{-3n}\int_0^1\!\int_0^1
 \frac{x^{6n}(1-x)^H y^{4n}(1-y)^{4n}(1+4y)^{4n}}
 {[x(1-y)+5y]^{5n+1}}\dd y\dd x.
\end{align}
The corresponding $F_n^{(1)}(H)$ replaces the inner integral by the positively oriented residue contour about $y=x/(x-5)$ and includes the factor $1/(2\pi i)$.

For later use, the substitution $y=\eta/(\eta-5)$ gives
\begin{align}\label{f:eq:transformed-zero}
 F_n^{(0)}(H)
 &=(-1)^{11n}\int_0^1 x^{6n}(1-x)^H
 \int_0^{-\infty}\phi_n(x,\eta)\dd\eta\dd x,\\
 \phi_n(x,\eta)
 &=\frac{\eta^{4n}(1-\eta)^{4n}}
 {(x-\eta)^{5n+1}(\eta-5)^{7n+1}}.\label{f:eq:phi}
\end{align}
If $\Gamma$ encloses $[0,1]$ and excludes $5$, then
\begin{equation}\label{f:eq:transformed-one}
 F_n^{(1)}(H)=\frac{(-1)^{11n}}{2\pi i}
 \int_\Gamma\int_0^1 x^{6n}(1-x)^H\phi_n(x,\eta)\dd x\dd\eta.
\end{equation}
Contours through $0,1$ are interpreted by limits. The endpoint convergence follows either from the elementary estimates below or from the argument for (2.11) in~\cite{VZ18}.

There are three ingredients:
\begin{equation}\label{f:eq:three}
 \begin{gathered}
 K_nJ_{n,r}\in\Z\theta+\Z\quad(0\leq r\leq5),\\
 |J_{n,r}|<6(150000000)^{-n},\\
 (J_{n,0},\ldots,J_{n,5})\ne(0,\ldots,0).
 \end{gathered}
\end{equation}
The limiting value of $\log K_n/n$ in the first ingredient is strictly below $\log(150000000)$.

\subsection{The inherited arithmetic divisor}\label{f:sec:arithmetic}
\subsubsection{Definition of the common multiplier}
Define $\mu$ on $[0,1)$ to be $1$ on
\[
 (1/6,2/7),\ (1/3,3/7),\ (1/2,4/7),\ (2/3,5/7),\ (3/4,5/6),
\]
$2$ on $(5/6,6/7)$, and $0$ elsewhere. Values at the finitely many displayed endpoints will be immaterial; set them to zero.
For $n\geq1000$ let
\begin{align}
 E_n&=\prod_{d=1}^{14}\prod_{b=-6}^{6}(dn+b),\label{f:eq:exceptional}\\
 \Delta_n&=\prod_{\substack{p\ \mathrm{prime},\ p>n/2\\p\nmid E_n}}
 p^{\mu(\{n/p\})},\label{f:eq:delta}\\
 D_n&=d_{8n+5}d_{7n+5}5^{3n}4^{2n},\qquad
 K_n=D_n/\Delta_n.\label{f:eq:K}
\end{align}
The product in \eqref{f:eq:delta} is finite: a nonzero exponent requires $n/p>1/6$, so $p<6n$. In particular, $\Delta_n\mid D_n$ and $K_n$ is a positive integer.

\begin{lemma}\label{f:lem:arith}
For $n\geq1000$ and $0\leq r\leq5$,
\[
 K_nJ_{n,r}\in\Z\theta+\Z.
\]
\end{lemma}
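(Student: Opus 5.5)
The proof goes in two layers. First I establish the crude statement $D_nJ_{n,r}\in\Z\theta+\Z$. Then I show that every prime counted in $\Delta_n$ divides both integer coefficients with the required multiplicity.

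\textbf{Base normalization.} The Viola--Zudilin integral $J_5(H,6n,4n,4n,5n,3n)$ contains the factor $(1+4y)^{4n}$. I would expand $(1+4y)^{4n}=(5y+(1-y)\cdot1-\ldots)$ binomially in two ways. One expansion uses $1+4y=5y+(1-y)$; the other uses $1+4y=(x(1-y)+5y)+(1-x)(1-y)$ together with the powers of $x$. Each term is then a five-parameter Rhin--Viola integral $I_5(h,j,k,l,m)$ with explicit parameters depending on the summation index $i$, and integer binomial weights times powers of $4$ (and $5$). For each summand, Proposition~\ref{q:prop:rv-specialized}, in the general form of \cite[Theorem~2.1]{RV05}, gives integrality after multiplication by $d_Md_N$ and powers of $z=5$ and $z-1=4$. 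I would check uniformly in $i$ and $0\le r\le5$ that the maxima satisfy $M\le 8n+5$ and $N\le 7n+5$, and that the power parameters are absorbed by $5^{3n}4^{2n}$, the $5^{-3n}$ prefactor being part of the normalization. This gives $D_nJ_{n,r}\in\Z\theta+\Z$ from either expansion.

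\textbf{Prime-by-prime saving.} Fix a prime $p>n/2$ with $p\nmid E_n$ and $\mu(\{n/p\})=k>0$. The exclusion of $E_n$ guarantees, exactly as in Lemma~\ref{q:lem:shift-stability}, that all floors $\lfloor(dn+b)/p\rfloor$ with $|b|\le6$, $d\le14$ equal $\lfloor dn/p\rfloor$. The shifts $r\le5$ and the index-dependent offsets therefore do not move the relevant floors, and the valuation of each candidate saving is a function of $\omega=\{n/p\}$ alone. Because $p>n/2$, the exponent $v_p(d_N)$ is at most $1$ for $N\le 8n+5$ once $p^2>8n+5$. The small $p$ in $(n/2,\sqrt{8n+5}]$ do not occur for $n\ge1000$.

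For each summand of the chosen expansion, I would apply the Rhin--Viola factorial transformations \cite[\S3]{RV05}. These are valid at $z=5>1$ with no continuation issue. Each summand is rewritten as a transformed integral whose normalizing factor contains a ratio of factorials, and this ratio has $p$-adic valuation given by a balanced floor combination as in \eqref{q:eq:e-def}. I would pick, separately for each summand and each $p$, the transform that maximizes this valuation, and show the maximum is $\ge \mu(\omega)$ on each listed interval.

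Which expansion to use depends on $\omega$. The first handles some intervals, such as $(1/6,2/7)$ and $(1/2,4/7)$, and the second handles the others, such as $(1/3,3/7)$ and $(2/3,5/7)$. The extra binomial coefficient $\binom{4n}{i}$ and the floors depending on $i$ have to be tracked too, since its $p$-adic valuation can compensate when a transform loses. Because $p$ divides each summand's coefficients to order $\mu(\omega)$, it divides the sum. This holds for all $p$ in \eqref{f:eq:delta}, the primes are distinct, and so $\Delta_n$ divides both coefficients of $D_nJ_{n,r}$, giving $K_nJ_{n,r}\in\Z\theta+\Z$.

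\textbf{Main obstacle.} The hard step is the uniform verification over all summation indices $i$ (from $0$ to $4n$) that some transform attains valuation $\ge\mu(\omega)$, including the value $2$ on $(5/6,6/7)$. I would handle it by writing $i/p$ in the form $\lfloor i/p\rfloor+\{i/p\}$. Every condition is then a finite piecewise-constant condition on the pair $(\omega,\{i/p\})\in[0,1)^2$, with breakpoints on finitely many lines. I would check it cell by cell, by a finite exact enumeration certified as elsewhere in the paper. The cost is keeping the binomial-coefficient valuation $\lfloor 4\omega\rfloor-\lfloor i/p\rfloor-\lfloor(4n-i)/p\rfloor$ in the ledger, so that the term-by-term choice of transform gives a common divisor.
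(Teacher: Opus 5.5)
There is a genuine gap in your second expansion. Splitting $1+4y=(x(1-y)+5y)+(1-x)(1-y)$ produces Rhin--Viola tuples $(10n+r-i,\,6n,\,4n,\,8n-i,\,5n+i)$, $0\le i\le 4n$, with $u_2=m+h-k=11n+r$. These are not the paper's expansion B, and they lie in different Rhin--Viola orbits.

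The paper gets B by first applying the Viola--Zudilin involution $J_z(h,j,k,l,m,q)=J_z(j,h,m,l,k,q)$ of \cite[Lemma~3.1]{VZ18}. This turns $(6n+r,6n,4n,4n,5n,3n)$ into $(6n,6n+r,5n,4n,4n,3n)$. Only then does it use the same splitting $1+4y=5y+(1-y)$, which gives $t_B(\lambda)=(6n,6n+r,5n+\lambda,9n+r-\lambda,4n+\lambda)$ with weights $\binom{5n+r}{\lambda}$.

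Your substitute cannot do B's job. Take $n/p\in(5/6,11/13)$. There $\mu=2$, so every summand of the chosen expansion must have cost $0$ in \eqref{f:eq:floor-claim}. The Rhin--Viola orbit has only six distinct entry multisets: $\{h,j,k,l,m\}$, $\{k,l,m,u_2,u_4\}$, $\{j,k,l,u_1,u_3\}$, $\{l,m,u_1,u_3,u_4\}$, $\{j,k,u_1,u_2,u_4\}$ and $\{h,j,m,u_1,u_4\}$.
\begin{itemize}
\item For your $i=0$ summand $(10n+r,6n,4n,8n,5n)$, the floors at $p$ of $h,j,k,l,m,u_1,u_2,u_3,u_4$ are $8,5,3,6,4,5,9,6,4$. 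The factorial savings are therefore $0,0,1,1,0,0$. The binomial coefficient is $1$, and all nine parameters exceed $p$, so every l.c.m.\ indicator is $1$. This summand costs $1$.
\item The $\lambda=0$ summand $(6n+r,6n,4n,8n,5n)$ of your first expansion (the paper's A) has floors $5,5,3,6,4,5,5,3,4$ and best saving $1$. It also costs $1$.
\end{itemize}
So neither of your expansions gives $p^2$ for these primes, and the lemma as stated, with $\mu=2$ on $(5/6,6/7)$, does not follow. By contrast, in B at $\lambda=0$ the floors are $5,5,4,7,3,5,4,2,5$, and the multiset $\{l,m,u_1,u_3,u_4\}$ saves $2$.

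Your allocation of intervals is also off. On $(1/5,1/4)$ there are $\lambda$ for which all nine parameters of $t_A(\lambda)$ have floor $1$ and $\binom{4n}{\lambda}$ is a $p$-unit. So A costs $2$ there against a target of $1$, and the paper needs B.

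Separately, your termwise derivation of $D_nJ_{n,r}\in\Z\theta+\Z$ does not work as described.
\begin{itemize}
\item In your second expansion every summand has $\mathcal H=11n+r>8n+5$. Since $11n+r$ belongs to the orbit-invariant parameter set, no transformation removes primes in $(10n+r,11n+r]$.
\item In expansion A every summand has $\beta=k+l-h=6n-r$. Applied term by term, the Rhin--Viola theorem therefore gives only $4^{6n-r}$, not $4^{2n}$.
\end{itemize}
The paper avoids both problems by quoting \cite[Lemma~2.1]{VZ18} for the base denominator. It does the termwise factorial analysis only at primes $p>n/2$, where powers of $2$ and $5$ are units. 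In that analysis it also keeps the l.c.m.\ indicators $\ind_{\mathcal H(s)\ge p}+\ind_{\mathcal K(s)\ge p}$ of the transformed tuple in the cost \eqref{f:eq:ep}, rather than asking the factorial ratio alone to reach $\mu$.
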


\subsubsection{Published inputs and the two expansions}
The direct integrality statement in~\cite[Lemma 2.1]{VZ18} gives
\[
 d_{8n+r}d_{7n+r}5^{3n}4^{\max(0,2n-r)}J_{n,r}\in\Z\theta+\Z.
\]
Consequently $D_n$ clears all coefficients for every shift under consideration.

For clarity, the mixed five-parameter integral used below is
\[
 I_z(t)=I_z^{(0)}(t)-(\log z)I_z^{(1)}(t),\qquad t=(h,j,k,l,m),
\]
where
\begin{equation}\label{f:eq:RV-definition}
 I_z^{(0)}(t)=z^{-l-m}\int_0^1\!\int_0^1
 \frac{x^j(1-x)^h y^k(1-y)^l}
 {[x(1-y)+zy]^{j+k-m+1}}\dd y\dd x.
\end{equation}
The component $I_z^{(1)}$ replaces the inner real integral by the
positively oriented residue contour about $y=x/(x-z)$, with the
factor $1/(2\pi i)$, exactly as for $J_z$.

The additional saving comes from the first binomial expansion in the
proof of~\cite[Lemma 2.1]{VZ18}, together with the involution
\[
 J_z(h,j,k,l,m,q)=J_z(j,h,m,l,k,q)
\]
of~\cite[Lemma 3.1]{VZ18}. Define
\begin{align}
 t_A(\lambda)&=(6n+r,6n,4n+\lambda,8n-\lambda,5n+\lambda),\notag\\
 t_B(\lambda)&=(6n,6n+r,5n+\lambda,9n+r-\lambda,4n+\lambda).
 \label{f:eq:expansion-tuples}
\end{align}
The two complete identities are
\begin{align}
 J_{n,r}&=\sum_{\lambda=0}^{4n}\binom{4n}{\lambda}
     5^{10n+\lambda} I_5(t_A(\lambda)),\label{f:eq:expansion-A}\\
 J_{n,r}&=\sum_{\lambda=0}^{5n+r}\binom{5n+r}{\lambda}
     5^{11n+r+\lambda} I_5(t_B(\lambda)).\label{f:eq:expansion-B}
\end{align}
The powers of $5$ are units at every prime used in $\Delta_n$.

For a five-tuple $t=(h,j,k,l,m)$ write
\[
 u_1=l+m-j,\quad u_2=m+h-k,\quad u_3=h+j-l,\quad u_4=j+k-m,
\]
\[
 \mathcal H(t)=\max(u_1,u_2,u_3,u_4),\qquad
 \mathcal K(t)=\max(u_1,\min(u_2,u_3),u_4).
\]
Theorem~2.1 of~\cite{RV05} says that, away from $2,5$, the denominator of the coefficient vector of $I_5(t)$ divides $d_{\mathcal H(t)}d_{\mathcal K(t)}$.
The two transformations
\[
 (h,j,k,l,m)\mapsto(h,m,l,k,j),\qquad
 (h,j,k,l,m)\mapsto(m+h-k,j+k-m,m,l,k)
\]
generate the Rhin--Viola orbit $\mathcal O(t)$, of size at most $12$. The quotient
\[
 \frac{I_5(t)}{h!j!k!l!m!}
\]
is unchanged by them~\cite[Section 3]{RV05}. All tuples and all four derived quantities in these orbits are nonnegative for the ranges used here. The checker verifies this at both endpoints of the affine $\lambda$ intervals.

\subsubsection{Coefficient vectors and local denominator transfer}
We use coefficient identities before evaluating at $z=5$. In the
normalization of~\cite[Theorem 2.1]{RV05}, the coefficient of
$\Li_2(1/z)$ in $I_z(t)$ is $-I_z^{(2)}(t)$, where the latter is
the companion double-contour integral. The factorial transformations
hold for this component as well. Likewise,~\cite[(2.3)]{VZ18} identifies
the corresponding coefficient of $J_z$ with $-J_z^{(2)}$.
The binomial identities hold for all these components. They therefore
give identities of rational coefficient vectors without assuming the
irrationality of the value being studied.

Write $T(t)=\prod_{i=1}^5 t_i!$, and let $\boldsymbol c(t)$ denote
the rational coefficient vector of $I_5(t)$. The factorial identities give
\begin{equation}\label{f:eq:coefficient-transfer}
 \boldsymbol c(t)=\frac{T(t)}{T(s)}\boldsymbol c(s),
 \qquad s\in\mathcal O(t).
\end{equation}
Thus, for $p\ne2,5$, a sufficient denominator exponent for $I_5(t)$ is
\[
 v_p(d_{\mathcal H(s)}d_{\mathcal K(s)})+v_p(T(s))-v_p(T(t)).
\]
Multiplication by the binomial coefficient subtracts its valuation from
this cost. The best transformation can be chosen separately for each
summand and each prime. A maximum over the summands then covers an
entire expansion; the better of two expansions bounds the same vector.
No single permutation has to realize all these minima simultaneously.

\subsubsection{The finite floor inequality}
For $p>n/2$ and $n\geq1000$, all factorial and least-common-multiple arguments in this calculation are at most $14n+2r\le14n+10<p^2$. Thus their $p$-valuations use only first powers of $p$.
For either expansion, let $a=4n$ or $5n+r$ as appropriate, and define
\begin{align}\label{f:eq:ep}
 e_p(t,\lambda)
 &=\min_{s\in\mathcal O(t)}\left(
 \ind_{\mathcal H(s)\geq p}+\ind_{\mathcal K(s)\geq p}
 +\sum_{i=1}^5\left\lfloor\frac{s_i}{p}\right\rfloor
 -\sum_{i=1}^5\left\lfloor\frac{t_i}{p}\right\rfloor\right)\notag\\
 &\hspace{8mm}-\left\lfloor\frac ap\right\rfloor
 +\left\lfloor\frac\lambda p\right\rfloor
 +\left\lfloor\frac{a-\lambda}{p}\right\rfloor.
\end{align}
The rational coefficient vector of that summand has $p$-valuation at least $-e_p(t,\lambda)$ in each coordinate. Taking the maximum over integer $\lambda$ suffices for the whole expansion; taking the better of expansions A and B is allowed.

The exact finite statement checked is
\begin{equation}\label{f:eq:floor-claim}
 \min\left\{
 \ind_{8n+r\geq p}+\ind_{7n+r\geq p},\
 \max_{\lambda\in A}e_p(t_A,\lambda),\
 \max_{\lambda\in B}e_p(t_B,\lambda)
 \right\}
 \leq\ind_{8n+5\geq p}+\ind_{7n+5\geq p}-\mu(\{n/p\}),
\end{equation}
provided $p\nmid E_n$. The following witness table suffices for both periods $0<n/p<1$ and $1<n/p<2$, and for every $r=0,\ldots,5$. Outside the listed intervals the direct denominator suffices.
\begin{center}
\begin{tabular}{ccc@{\qquad}ccc}
\toprule
$\{n/p\}$ & $\mu$ & witness & $\{n/p\}$ & $\mu$ & witness\\
\midrule
$(1/6,1/5)$ & 1&A & $(1/2,4/7)$&1&A\\
$(1/5,1/4)$ & 1&B & $(2/3,5/7)$&1&A\\
$(1/4,2/7)$ & 1&A & $(3/4,4/5)$&1&A\\
$(1/3,2/5)$ & 1&A & $(4/5,5/6)$&1&B\\
$(2/5,3/7)$ & 1&B & $(5/6,6/7)$&2&B\\
\bottomrule
\end{tabular}
\end{center}

\subsubsection{Uniformity over the bounded shifts}
The following lemma explains why a finite floor calculation proves
\eqref{f:eq:floor-claim} for every admissible $n,p$, rather than merely
for sampled values of $n$.

\begin{lemma}[Stability of the floor events]\label{f:lem:floor-stability}
Consider the affine arguments arising from the two expansions, their
Rhin--Viola orbits, the derived quantities $u_i$, and the integer
$\lambda$-range endpoints. For $n\ge1000$, $0\le r\le5$ and a prime
$p>n/2$ with $p\nmid E_n$, the denominator costs at every relevant
integer floor event and its neighbours are determined by the open
cell containing $x=n/p$ and the signed bounded shift. The cell boundaries
are among
\begin{equation}\label{f:eq:floor-cuts}
 x=i/d,\qquad
 d\in\mathcal D:=\{1,2,3,4,5,6,7,8,9,10,12,13,14\},
 \qquad 0\le i\le2d.
\end{equation}
There are $108$ open cells in $0<x<2$.
\end{lemma}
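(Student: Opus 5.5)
The plan is to write every relevant integer argument in the uniform affine form $\alpha n+\beta$. Here $\alpha$ is a nonnegative integer and $\beta$ a small integer; $\beta$ collects the shift $r$, the index $\lambda$ where it enters, and constants. Each floor event is then $\lfloor(\alpha n+\beta)/p\rfloor$ or an indicator $\mathbf 1_{\alpha n+\beta\ge p}$, and I will read off the slopes and intercepts directly. The orbit $\mathcal O(t)$ is generated by the two displayed transformations, and these are linear and integral in $(h,j,k,l,m)$. So every coordinate of an orbit element of $t_A(\lambda)$ or $t_B(\lambda)$, and every $u_i$, is affine in $(n,r,\lambda)$ with integer coefficients. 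Evaluating at the endpoints of the $\lambda$-ranges $[0,4n]$ and $[0,5n+r]$ gives slopes $\alpha\in\{0,\dots,14\}$ in $n$. The claim $\alpha\le 14$ is exactly the bound $14n+2r$ recorded before \eqref{f:eq:floor-claim}. The intercepts satisfy $|\beta|\le 6$, since $r\le5$ and at most one extra unit comes from a neighbour. I would first tabulate these affine forms and confirm which slopes actually occur, namely the set $\mathcal D$.

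The key step concerns a fixed affine argument $\alpha n+\beta$ with $\alpha\ge1$. Write $\alpha n+\beta=p\lfloor \alpha n/p\rfloor+(\alpha n\bmod p)+\beta$. The floor can differ from $\lfloor\alpha n/p\rfloor$, or depend on the sign of $\beta$, only if some multiple of $p$ lies in the window $\alpha n+[-6,6]$. That means $p\mid \alpha n+b$ for some $|b|\le6$, which is excluded by $p\nmid E_n$ through \eqref{f:eq:exceptional}. This is the same argument as Lemma~\ref{q:lem:shift-stability}. Hence $\lfloor(\alpha n+\beta)/p\rfloor=\lfloor \alpha x\rfloor$ with $x=n/p$, and $\mathbf 1_{\alpha n+\beta\ge p}=\mathbf 1_{\alpha x\ge1}$; in both cases no dependence on $\beta$ survives. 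For $\alpha=0$ the argument is at most $6<p$, because $p>n/2\ge500$. The constraint $p>n/2$ gives $x<2$. Moreover $x=i/\alpha$ exactly would force $p\mid \alpha n$, hence $p\le 14$, which contradicts $p>500$. So $x$ lies in the open complement of the cut set \eqref{f:eq:floor-cuts}.

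The interior variable $\lambda$ needs separate care, because $e_p$ is maximized over integer $\lambda$ and not only at the endpoints. I would handle it as follows. For fixed $x$, each term of \eqref{f:eq:ep} is piecewise constant in $\lambda$, with breakpoints where $\lambda$, or an affine combination such as $cn\pm\lambda+\beta$, crosses a multiple of $p$. So $\max_\lambda e_p$ is a maximum over finitely many $\lambda$-intervals, whose breakpoints are of the form $\lambda\equiv$ const $\pmod p$ with $\lambda\in[0,a]$. Rescale by $\lambda=p\,y$. The pattern of floors as $y$ ranges over $[0,a/p]$ then depends only on $x$, through the endpoints $a/p=4x$ or $5x+r/p$, and on the ordering of the slopes. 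The neighbours of each breakpoint, meaning $\lambda$ at distance $\le 6$, are controlled by the same window argument, since everything stays in $\alpha n+[-6,6]$ up to multiples of $p$. Consequently the whole piecewise structure is constant on each open cell. This is the step I expect to be the main obstacle. One must check that every comparison between a $\lambda$-breakpoint and a range endpoint is an event of the form $\alpha x\in\mathbb Z$ with $\alpha\in\mathcal D$, and that no spurious slope such as $11$ arises. I would verify this by listing the differences of slopes in the orbit tuples.

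The count is then immediate. The distinct fractions $i/d$ in $(0,2)$ with $d\in\mathcal D$ are $2\sum_{d\in\mathcal D,\,d\ge2}\varphi(d)+1$, where the term $1$ is the point $x=1$. Using $\varphi$ values $1,2,2,4,2,6,4,6,4,4,12,6$ for $d=2,\dots,14$, omitting $d=11$, the sum is $53$. This gives $107$ interior cut points and therefore $108$ open cells.
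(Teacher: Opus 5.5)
Your route is essentially the paper's. You write the arguments as $An+B\lambda+Cr$ with $B\in\{0,\pm1\}$ and test integer events together with their neighbours. At each event you reduce every other argument to a comparison of $dn+e$ with a multiple of $p$, use the separation $\dist(dn,p\Z)\ge7$ coming from $E_n$, and finish with a finite enumeration of the affine coefficient lists. Your proposed check should be the full list of relative slopes $D-EA/B$ over all pairs of arguments, not just ``differences'' or ``breakpoint versus range endpoint''. This list includes sums when $E/B=-1$: in expansion B, $9n+r-\lambda$ evaluated at an event of $5n+\lambda$ gives $14$. Within it, the range endpoints are simply the events of $\lambda$ and $a-\lambda$. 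Your count of $2\cdot53+1=107$ cut points, hence $108$ cells, is correct; it implicitly uses that $\mathcal D$ is closed under divisors. However, two steps are wrong as written.

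\textbf{Boundary exclusion.} From $x=i/\alpha$ you get $p\mid\alpha n$, but this does not force $p\le14$. If $n$ is prime and $p=n$, then $x=1$ is a cut point. It is excluded only because the factor $n$ of $E_n$ (the case $d=1$, $b=0$) is divisible by $p$. So use the hypothesis $p\nmid E_n$ directly, as the paper does.

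\textbf{Coincident events.} This is the more substantive error. Your assertion that ``no dependence on $\beta$ survives'' fails when the relative slope is $d=0$. Then $g(\lambda_0+\delta)=\frac EB kp+e$, and the $E_n$ window gives nothing. Since $|e|\le6<p$, one has $\lfloor g/p\rfloor=\frac EB k-\ind_{e<0}$, so the comparison is decided by the sign of the shift. That is exactly why the lemma says ``and the signed bounded shift''. This genuinely occurs. In expansion A, the coordinate $4n+\lambda$ and $u_3=4n+r+\lambda$ jump together when $r=0$. For $r\ge1$ they are separated by the block $kp-4n-r\le\lambda\le kp-4n-1$, on which their floors are $k-1$ and $k$ respectively. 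Your ``$\alpha=0$: the argument is at most $6<p$'' covers only nonnegative standalone arguments, not these signed residuals.

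\textbf{Neighbour radius.} Relatedly, the neighbours must be taken with $\delta\in\{-1,0,1\}$. This suffices, because each floor jumps between $\lambda_0-1$ and $\lambda_0$ or between $\lambda_0$ and $\lambda_0+1$. It also gives $|e|\le r+1\le6$, which is exactly what $E_n$ controls. With the radius $6$ you mention, $|e|$ could reach $11$, beyond the separation $7$ that $p\nmid E_n$ guarantees.

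With these repairs, your conclusion goes through: for fixed $r$ and $\delta$, every cost is constant on each open cell.
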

\begin{proof}
Every relevant argument has the form
\[
 f(\lambda)=An+B\lambda+Cr,\qquad B\in\{-1,0,1\}.
\]
When $B\ne0$, its events occur at
\[
 \lambda_0=\frac{kp-An-Cr}{B}.
\]
These are integers because $B=\pm1$. The cost is constant on integer
blocks between events. Testing the range endpoints and
$\lambda_0-1,\lambda_0,\lambda_0+1$ therefore covers every possible
value, including both directions of a floor jump. The indicator
conditions involving $\mathcal H$ and $\mathcal K$ are Boolean
combinations of the tests $u_i\ge p$, so they introduce no other events.

At a neighbour of an event, a second affine argument
$g(\lambda)=Dn+E\lambda+Fr$ becomes
\begin{equation}\label{f:eq:event-substitution}
 g(\lambda_0+\delta)=
 \left(D-\frac{EA}{B}\right)n+\frac EB kp
 +\left(F-\frac{EC}{B}\right)r+E\delta,
 \qquad \delta\in\{-1,0,1\}.
\end{equation}
The coefficient of $kp$ is integral. Exact enumeration of the affine
coefficient lists gives
\[
 \left|D-\frac{EA}{B}\right|\le14,
 \qquad \left|F-\frac{EC}{B}\right|\le1,
 \qquad |E|\le1.
\]
These determinant bounds are checked in the arithmetic verifier, as are
the affine endpoint inequalities ensuring admissibility of every orbit
tuple. Thus every remaining comparison is between $dn+e$ and a
multiple of $p$, with
\[
 |d|\le14,\qquad |e|\le r+1\le6.
\]
The same calculation applies to the membership tests
$\lambda\ge0$ and $a-\lambda\ge0$.

The definition of $E_n$ implies the integer separation
\begin{equation}\label{f:eq:prime-separation}
 p\nmid E_n\quad\Longrightarrow\quad
 \dist(dn,p\Z)\ge7\qquad(1\le |d|\le14).
\end{equation}
Indeed, a distance at most $6$ would give a factor $dn+b$ divisible by
$p$, after changing both signs when necessary. Hence no perturbation
$|e|\le6$ crosses an unshifted event when $d\ne0$.
When $d=0$, the sign of $e$ determines the comparison, with $e=0$
retained as equality. Here $p>500>|e|$. This handles coincident events
that separate under a shift, including the short integer intervals
between them.

All the nonzero $n$-coefficients of these comparisons have absolute
value in $\mathcal D$, as verified by enumerating the affine lists.
They give precisely the subdivision~\eqref{f:eq:floor-cuts}.
A boundary value of $n/p$ would imply $p\mid dn$, and is excluded by
$p\nmid E_n$. On each remaining cell, the unshifted comparisons are
fixed; the signed bounded comparisons just described are fixed too.
This proves the assertion.
\end{proof}

\begin{proof}[Finite verification of \eqref{f:eq:floor-claim} and completion of Lemma~\ref{f:lem:arith}]
For each open cell take its reduced rational midpoint $N/D$, and use the
integer representative $(n_*,p_*)=(1000N,1000D)$.
This is a floor calculation: $p_*$ need not be prime. Every nonzero
distance to an unshifted integer boundary is at least $1000$, so it
realizes the stable comparisons in Lemma~\ref{f:lem:floor-stability}.
Coincident events retain the signed shift and integer neighbour.

The program \texttt{verify\_arithmetic.py} constructs the affine lists
and orbit transformations, generates the subdivision, and tests the
prescribed A/B witness from the displayed table for all six shifts.
It checks $300$ nontrivial cell/shift witnesses and $23940$ deduplicated
integer-$\lambda$ event tests, using only integers and exact rational
numbers. It does not read a table of purported answers. When $\mu=0$,
the direct VZ denominator suffices without further tests.
Lemma~\ref{f:lem:floor-stability} transfers this finite calculation to
all the $n,p,r$ under consideration, proving~\eqref{f:eq:floor-claim}.
Since $D_n$ already clears every coefficient, the local inequality at
each prime in $\Delta_n$ proves Lemma~\ref{f:lem:arith}.
\end{proof}

\begin{remark}[Why the exceptional primes are removed]
This exclusion is needed for the stated sufficient denominator bound.
For example, at $n=1000$, $p=4001$ and $r=2$, the target cost is $1$,
but both expansions have worst cost $2$; in expansion B this occurs at
$\lambda=1001$. The prime divides $4n+1$, hence is excluded by $E_n$.
This is a failure of the two-expansion bound without the exclusion, not
a claim that the actual coefficients fail to be integral. Its
asymptotic cost is only logarithmic, as we now show.
\end{remark}

\subsubsection{Asymptotic cost}
The prime number theorem gives
\begin{equation}\label{f:eq:saving}
 \lim_{n\to\infty}\frac{\log\Delta_n}{n}
 =\int_0^2\frac{\mu(\{x\})}{x^2}\dd x
 =\frac{708641}{180180}=:S.
\end{equation}
Indeed, each interval gives a difference of reciprocal endpoints. Omitting primes dividing $E_n$ changes $\log\Delta_n$ by at most $2\log E_n=O(\log n)$, since $E_n$ is a polynomial of degree $182$ and $\log E_n=182\log n+O(1)$.
Also $\log d_{cn+O(1)}/n\to c$. Consequently
\begin{equation}\label{f:eq:cost}
 C:=\lim_{n\to\infty}\frac{\log K_n}{n}
 =15+3\log5+2\log4-S
 =18.66794097658059939998\ldots.
\end{equation}
The decimal is not used to establish a sign.

\subsection{A fixed-contour bound with rational certificates}\label{f:sec:contours}
Set
\[
 R=\frac1{150000000},\qquad a=\frac45,\quad b=\frac74,
 \qquad X(t)=t-ia\,t(1-t),\quad Y(u)=u+ib\,u(1-u).
\]
The two phase functions to be bounded are
\begin{align}
 f_0(t,u)&=\frac{[t(1-t)]^6[u(1-u)]^4(1+4u)^4}
 {125[t(1-u)+5u]^5},\label{f:eq:f0}\\
 f_1(x,y)&=\frac{x^6(1-x)^6y^4(1-y)^4}{(x-y)^5(y-5)^7}.\label{f:eq:f1}
\end{align}
At the removable corner limits they are set to zero.

\begin{lemma}\label{f:lem:phase}
For $0\leq t,u\leq1$,
\[
 0\leq f_0(t,u)<R,\qquad |f_1(X(t),Y(u))|<R.
\]
\end{lemma}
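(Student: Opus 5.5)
The plan is to reduce both inequalities to statements about real polynomials in $(t,u)$ with rational coefficients. These are then certified by an exact branch-and-bound over rational boxes, with separate elementary estimates near the corners where numerator and denominator vanish together.

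\textbf{The bound on $f_0$.} Nonnegativity is immediate: every numerator factor is nonnegative on $[0,1]^2$. The denominator satisfies $t(1-u)+5u\ge\max(t(1-u),5u)>0$ away from $(t,u)=(0,0)$. At that corner, $f_0\le t^6u^4\cdot 5^4/(125\,(t(1-u)+5u)^5)$. Since $(t(1-u)+5u)^5\ge c\,\max(t,u)^5$ for $u\le 1/2$, this is $O(\max(t,u)^5)$. So a small explicit square $[0,\varepsilon]^2$ is handled by a one-line rational bound.

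On the remainder I will cover $[0,1]^2$ by rational boxes $[t_0,t_1]\times[u_0,u_1]$ and bound $f_0$ from above on each box. The numerator is bounded by the product of the box maxima of the factors $t(1-t)$, $u(1-u)$ and $1+4u$; each factor is monotone or unimodal with a known vertex, so its box maximum is explicit. The denominator is bounded below by its minimum at a box corner, since $t(1-u)+5u$ is bilinear. Each resulting rational upper bound is compared with $R$ by integer cross-multiplication, and boxes that fail are bisected. The true maximum of $f_0$ lies in the interior near the $t=1/2$ line, so the margin should be adequate with modest refinement.

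\textbf{The bound on $f_1$.} I square to get $|f_1|^2$ as a rational function of $t,u$. The relevant moduli are
\begin{gather*}
|X|^2=t^2(1+a^2(1-t)^2),\qquad |1-X|^2=(1-t)^2(1+a^2t^2),\\
|Y|^2=u^2(1+b^2(1-u)^2),\qquad |1-Y|^2=(1-u)^2(1+b^2u^2),\\
|Y-5|^2=(5-u)^2+b^2u^2(1-u)^2,\qquad
|X-Y|^2=(t-u)^2+\bigl(a\,t(1-t)+b\,u(1-u)\bigr)^2 .
\end{gather*}
The last identity holds because the imaginary parts of $X$ and $Y$ have opposite signs, so they add. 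Thus $|X-Y|$ vanishes only at $(0,0)$ and $(1,1)$.

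Near those two corners I use $|X-Y|^2\ge (t-u)^2+\min(a,b)^2\bigl(t(1-t)+u(1-u)\bigr)^2\gtrsim\max(t,u)^2$ (respectively $\max(1-t,1-u)^2$). There the numerator vanishes to order $10$ against order $5$ in the denominator, giving an explicit small bound on a corner square. Elsewhere the same box scheme applies to $|f_1|^2$: each polynomial factor is bounded above or below on a box by its monotone or unimodal structure, using the vertex locations of the quadratics. The lower bound for $|X-Y|^2$ on a box uses the minimum of $(t-u)^2$ over the box plus the minimum of the nonnegative imaginary-part sum. Each box inequality is then checked against $R^2$ in exact arithmetic.

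\textbf{Main obstacle.} I expect the main difficulty to be the $f_1$ bound along the diagonal $t\approx u$ and near the corners. There $|X-Y|$ is controlled only by the imaginary separation, and the naive box lower bound degrades. I would first try the corner squares with explicit $\varepsilon$ chosen so that the analytic bound is below $R$. Then I would run adaptive bisection elsewhere. If the diagonal strip still requires too many boxes, I would add a dedicated estimate on $|t-u|\le\delta$ using $|X-Y|\ge a\,t(1-t)+b\,u(1-u)$ alone.
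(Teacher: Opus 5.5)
Your proposal is correct and follows essentially the paper's route. Both arguments use an exact rational box certificate for $f_0$ and for $|f_1|^2$: the lower bound $d^2+(aT_-+bU_-)^2$ for $|X-Y|^2$, the corner minimum of the bilinear denominator $t(1-u)+5u$, and analytic estimates on small squares at the equal-endpoint corners. The only difference is cosmetic. The paper groups $|X(1-X)|^2=S_a(T)$ and $|Y(1-Y)|^2=S_b(U)$ and uses monotonicity of $S_c$ on $[0,1/4]$, whereas you bound $|X|$, $|1-X|$, $|Y|$, $|1-Y|$, $|Y-5|$ separately. Each of those is monotone on $[0,1]$ for $a=4/5$, $b=7/4$, so your bound is valid, just slightly looser.
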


\begin{proof}[Exact certificate and its analytic justification]
Write $T=t(1-t)$, $U=u(1-u)$ and
\[
 S_c(v)=v^2\{1+c^2(1-2v)+c^4v^2\}.
\]
Then
\begin{equation}\label{f:eq:modulus}
 |f_1(X(t),Y(u))|^2
 =\frac{S_a(T)^6S_b(U)^4}
 {[(t-u)^2+(aT+bU)^2]^5[(5-u)^2+b^2U^2]^7}.
\end{equation}
The function $S_c$ is increasing on $[0,1/4]$: its derivative is
\[
 2v\{1+c^2-3c^2v+2c^4v^2\}\geq2v(1+c^2/4)\geq0.
\]
Also $(5-u)^2+b^2u^2(1-u)^2$ is decreasing on $[0,1]$, since its derivative is at most $-8+b^2/2<0$.

For a rational box $[t_-,t_+]\times[u_-,u_+]$, compute the exact minima and maxima $T_-,T_+,U_-,U_+$ of $t(1-t),u(1-u)$. Put
\[
 d=\max(0,t_--u_+,u_--t_+).
\]
An upper bound for \eqref{f:eq:modulus} is
\begin{equation}\label{f:eq:complex-box}
 \frac{S_a(T_+)^6S_b(U_+)^4}
 {[d^2+(aT_-+bU_-)^2]^5[(5-u_+)^2+b^2u_+^2(1-u_+)^2]^7}.
\end{equation}
For the real phase an upper bound is
\begin{equation}\label{f:eq:real-box}
 \frac{T_+^6[U_+(1+4u_+)]^4}{125[t_-(1-u_-)+5u_-]^5}.
\end{equation}
These are rational expressions.

The corner patches are handled separately, with $\varepsilon=1/16$. On $0\leq t,u\leq\varepsilon$ one has
\[
 \frac{t^6u^4}{(t+u)^5}\leq\frac{\varepsilon^5}{32}.
\]
To see this, first increase $t$ to $\varepsilon$; the resulting expression is increasing in $u\leq\varepsilon$. This proves the bound by evaluating at the top corner.
For the complex phase this gives, at both equal-endpoint corners,
\begin{equation}\label{f:eq:corner-complex}
 |f_1|^2\leq
 \frac{\varepsilon^{10}(1+a^2)^6(1+b^2)^4}
 {1024a^{10}(1-\varepsilon)^{10}4^{14}}<R^2.
\end{equation}
Here $b\geq a$, $|Y-5|\geq4$, and $S_c(v)\leq v^2(1+c^2)$ for $c=a,b$ and $0\leq v\leq1/4$.
For the real phase, at $(0,0)$,
\begin{equation}\label{f:eq:corner-real}
 f_0\leq\frac{\varepsilon^5(1+4\varepsilon)^4}{4000(1-\varepsilon)^5}<R.
\end{equation}
Both final comparisons are exact rational comparisons.

The remaining unit square is covered by $458$ rational boxes for the complex phase and $205$ for the real phase. Every bound \eqref{f:eq:complex-box} is strictly less than $R^2$, and every bound \eqref{f:eq:real-box} is strictly less than $R$. The certificate files contain the box endpoints. The independent checker \texttt{verify\_contours.py} checks every inequality by cross multiplication and verifies a disjoint cover of the square, including the corner patches. The cover check is stronger than merely adding box areas: on every vertical slab induced by the endpoints, it checks that the vertical intervals tile $[0,1]$ with no gap or overlap.
\end{proof}

\begin{lemma}\label{f:lem:integral-bound}
For every $n\geq1$ and $0\leq r\leq5$,
\[
 |J_{n,r}|<6R^n.
\]
\end{lemma}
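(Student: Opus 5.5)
We split $J_{n,r}=F_n^{(0)}(6n+r)-(\log5)F_n^{(1)}(6n+r)$ and bound the two pieces separately using Lemma~\ref{f:lem:phase}.

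\emph{Real piece.} In \eqref{f:eq:real-original} with $H=6n+r$, the integrand is
\[
 (1-x)^r\,f_0(x,y)^n\,[x(1-y)+5y]^{-1}.
\]
The factors $5^{-3n}$ and the $125$ in $f_0$ match, since $125^n=5^{3n}$. The exponents also match: $x^{6n}(1-x)^{6n}=[x(1-x)]^{6n}$, and $y^{4n}(1-y)^{4n}(1+4y)^{4n}$ matches $[u(1-u)]^4(1+4u)^4$. The denominator $[\cdot]^{5n+1}$ splits as $[\cdot]^{5n}\cdot[\cdot]$. For the leftover factor we note that $x(1-y)+5y\ge\min(x,5)$, which is not bounded below near $x=y=0$. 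We therefore keep the bound $f_0<R$ together with the integrability of $1/(x(1-y)+5y)$ on the unit square: the integral $\int\!\!\int\,dy\,dx/(x+4y)$ is finite, at most about $1$. Hence $|F_n^{(0)}|\le R^n\cdot c_0$ with $c_0\le\int_0^1\!\int_0^1\frac{dy\,dx}{x(1-y)+5y}$, which we evaluate or crudely bound by a small constant such as $1$.

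\emph{Residue piece.} We use \eqref{f:eq:transformed-one} and deform both contours. The $x$-integral over $[0,1]$ is deformed to the arc $X(t)$, $t\in[0,1]$, which dips below the real axis. The $\eta$-contour $\Gamma$ is taken as the union of the arc $Y(u)$ above $[0,1]$ and its complex conjugate, which encloses $[0,1]$ and excludes $5$. On the conjugate arc, the modulus bound follows by the same computation with $X$ replaced by its conjugate, or by a symmetry argument. Because $X$ lies below the axis and $Y$ above, they meet only at the endpoints $0$ and $1$. There the phase $f_1$ vanishes, and the limits are justified as stated after \eqref{f:eq:transformed-one}. The integrand is then
\[
 (1-X)^r f_1(X,Y)^n/[(X-Y)(Y-5)]
\]
times the Jacobians $|X'|\le1+a$ and $|Y'|\le1+b$. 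Note that $(x-\eta)^{5n+1}$ appears in the denominator, while $f_1$ has $(x-y)^5$; the sign is absorbed into $(-1)^{11n}$.

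\textbf{Main obstacle.} The extra factor $1/|X-Y|$ is unbounded near the corners $t=u=0$ and $t=u=1$. We handle it by spending one power of $n$: write
\[
 |f_1|^n/|X-Y|=|f_1|^{n-1}\cdot|f_1|/|X-Y|.
\]
The quotient $|f_1|/|X-Y|$ is bounded, because $|f_1|\lesssim|X|^6|Y|^4/|X-Y|^5$ and $|X-Y|\ge c(|t|+|u|)$ near $0$, using $aT+bU$ as a lower bound for the imaginary separation. Alternatively, and more simply, we avoid this by noting $|X-Y|\ge aT+bU$ and that $\int\!\!\int(aT+bU)^{-1}$ converges. Then $|F_n^{(1)}|\le R^n\cdot c_1/(2\pi)$, where $c_1$ includes $(1+a)(1+b)$, $1/\min|Y-5|\le1/4$, a factor $2$ for the two arcs of $\Gamma$, and that convergent integral.

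Finally we sum: $c_0+\log5\cdot c_1/(2\pi)<6$. The constant $6$ is generous, so crude rational bounds on these two-dimensional integrals suffice. Checking that the constant is below $6$ while handling the $1/|X-Y|$ singularity cleanly is the delicate part of the argument.
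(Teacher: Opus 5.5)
Your proposal is correct and follows essentially the paper's proof. It uses the same split of $J_{n,r}$; the real piece is bounded by $f_0<R$ times an integrable majorant of $1/[x(1-y)+5y]$ (you use $x+4y$, the paper $x+y$); and the residue piece is taken on the arcs $Y$, $\overline{Y}$ with the inner path moved to $X$, $\overline{X}$, using Lemma~\ref{f:lem:phase}, $|Y-5|\ge4$ and a bound for $\int\!\!\int|X-Y|^{-1}\,dt\,du$. Your estimate $|X-Y|\ge aT+bU$ is a valid and slightly simpler substitute for the paper's half-square argument, since $\int\!\!\int(aT+bU)^{-1}\le 8\int_0^{1/2}\!\int_0^{1/2}\frac{dt\,du}{at+bu}<4.6$; together with $|1-X(t)|\le1$, which you should state explicitly, this gives a total constant below $4$. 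You should drop the ``spend one power of $n$'' variant, because unless you separately prove $|f_1|/|X-Y|=O(R)$ it loses a factor $R^{-1}$.
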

\begin{proof}
In \eqref{f:eq:real-original}, with $H=6n+r$, the integrand is $f_0(x,y)^n(1-x)^r/[x(1-y)+5y]$. Since
\[
 x(1-y)+5y\geq x+y,\qquad
 \int_0^1\!\int_0^1\frac{\dd x\dd y}{x+y}=2\log2<2,
\]
we obtain $|J_{n,r}^{(0)}|<2R^n$.

For \eqref{f:eq:transformed-one}, take the upper and lower parabolic
arcs $Y([0,1])$ and its conjugate, oriented to surround $[0,1]$.
For an upper-half-plane $\eta$, deform the inner $x$ path to
$X([0,1])$ in the lower half-plane. No pole is crossed. Use the
conjugate $x$ path for the lower arc.

Here is a uniform endpoint estimate for this deformation. Along the
straight homotopy put
\[
 X_s(t)=t-is\frac45t(1-t),\qquad 0\le s\le1.
\]
For $0\le t,u\le1/2$, writing $D=|X_s(t)-Y(u)|$, we have
\[
 D\ge|t-u|,\qquad D\ge\frac b2u,
 \qquad t+u\le|t-u|+2u\le\left(1+\frac4b\right)D.
\]
Consequently
\begin{equation}\label{f:eq:homotopy-separation}
 |X_s(t)-Y(u)|\ge\frac7{23}(t+u),
 \qquad 0\le t,u\le\tfrac12,
\end{equation}
uniformly in $s$. Near $(1,1)$ the same bound holds with $t,u$
replaced by $1-t,1-u$. For each fixed $n,r$, the full integrand
including the path derivatives is therefore
$O((t+u)^{5n-1})$ at $(0,0)$ and
$O(((1-t)+(1-u))^{5n+r-1})$ at $(1,1)$.
These bounds are locally integrable. They justify first performing the
deformation on truncated paths and then passing to the endpoint limits;
the small connecting pieces have vanishing contributions. The starting
contour representation is~\eqref{f:eq:transformed-one}, equivalently
\cite[(2.11)]{VZ18}.

We have $|1-X(t)|\leq1$, $|X'(t)Y'(u)|<3$, and $|Y(u)-5|\geq4$. Moreover
\[
 I:=\int_0^1\!\int_0^1\frac{\dd t\dd u}{|X(t)-Y(u)|}<7.
\]
Indeed, on $[0,1/2]^2$ the denominator is at least $a(t+u)/2$; the analogous bound holds on $[1/2,1]^2$. These two contributions are at most $5\log2$. On either off-diagonal half-square, the denominator is at least $1/4$. To prove the latter assertion, either $|t-u|\geq1/4$, or both variables lie in $[1/4,3/4]$, when the imaginary separation is at least $a(3/8)=3/10$. The two off-diagonal contributions total at most $2$. Thus $I\leq5\log2+2<7$.

The two half-contours and their conjugacy now give
\[
 |J_{n,r}^{(1)}|\leq\frac{1}{\pi}\frac34 I R^n
 <\frac74R^n<2R^n,
\]
using $\pi>3$. Since $\log5<2$,
\[
 |J_{n,r}|\leq |J_{n,r}^{(0)}|+(\log5)|J_{n,r}^{(1)}|<6R^n.
\]
\end{proof}

\subsection{A contiguous recurrence and nonvanishing}\label{f:sec:nonvanishing}
\begin{lemma}\label{f:lem:recurrence}
For $n\geq2$ and $H\geq6n$,
\begin{equation}\label{f:eq:recurrence}
 \sum_{j=0}^5 c_j(H,n)F_n(H+j)=0,
\end{equation}
where
\begin{align*}
 c_0={}&4(H+1)(H+2)(H-n+1),\\
 c_1={}&(H+2)(-15H^2-65Hn-70H+148n^2-69n-79),\\
 c_2={}&20H^3+215H^2n+190H^2+31Hn^2+1112Hn+592H\\
       &\quad-644n^3-314n^2+1400n+608,\\
 c_3={}&-10H^3-215H^2n-130H^2-501Hn^2-1542Hn-542H\\
       &\quad+390n^3-1347n^2-2719n-734,\\
 c_4={}&(H+n+4)(65Hn+10H+252n^2+334n+44),\\
 c_5={}&(H+n+4)(H+n+5)(H+2n+5).
\end{align*}
In particular, $c_5(H,n)>0$ in this range.
\end{lemma}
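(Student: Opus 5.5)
The plan mirrors the $-1/4$ argument (Lemmas \ref{q:lem:ode}--\ref{q:lem:recurrence}). Put
\[
 g_n(x)=x^{6n}\,G_n(x),
\]
where $G_n$ is the inner integral in \eqref{f:eq:real-original} (or the $\eta$-form \eqref{f:eq:transformed-zero}) with the $\log5$ times residue part subtracted. Then $F_n(H)=\int_0^1(1-x)^H g_n(x)\dd x$.

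First, I would show that $g_n$ satisfies a linear ODE in $x$ with polynomial coefficients. Compared with \S\ref{sec:quarter}, the inner integrand now carries the extra factor $(1+4y)^{4n}$. So $G_n$ is no longer a Gauss function but an Appell/Lauricella-type integral with three $y$-singularities besides the moving pole. I would derive the ODE by creative telescoping: look for an operator $P(x,\partial_x)$ and a rational certificate $R(x,y)$ with
\[
 P\bigl(\text{integrand}\bigr)=\partial_y\bigl(R\cdot\text{integrand}\bigr),
\]
where $R$ vanishes to high order at $y=0,1$. Boundary terms then vanish on $[0,1]$ and on the closed residue contour. Hence both $F^{(0)}$ and $F^{(1)}$ parts satisfy $Pg_n=0$, and so does the combination $F_n=F_n^{(0)}-(\log5)F_n^{(1)}$.

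Second, I would convert the ODE into a moment recurrence. Multiply by $(1-x)^H$, integrate over $(0,1)$, and integrate by parts. Each term $x^a(1-x)^b\partial_x^k$ turns into shifts of $H$, after re-expanding $x=1-(1-x)$. The outcome is a recurrence in $F_n(H+j)$ whose order equals the degree in $x$ of the leading coefficient plus the order of $P$ (minus cancellations). The stated six-term recurrence suggests, for example, a third-order operator whose leading coefficient has $x$-degree about $2$, i.e.\ $x^k(x-5)(\dots)$. Boundary terms at $x=0$ vanish because $g_n=O(x^{6n})$; the residue part is $O(x^{n})\log$-free there. Boundary terms at $x=1$ vanish once $H\ge6n$ supplies enough powers of $(1-x)$. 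This is where the hypotheses $n\ge2$, $H\ge6n$ enter. Finally, normalise and check that the coefficients agree with $c_0,\dots,c_5$ by exact polynomial identity, as a symbolic certificate.

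Last, positivity of $c_5$ is immediate: for $H\ge6n\ge12$, every factor $H+n+4$, $H+n+5$, $H+2n+5$ is positive.

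The main obstacle is the first step: actually producing the telescoping certificate for the integrand $y^{4n}(1-y)^{4n}(1+4y)^{4n}/[x(1-y)+5y]^{5n+1}$ uniformly in $n$. Uniformity matters because $R$ must be a rational function in $x,y,n$, not computed separately for each $n$. My first attempt would be the Almkvist--Zeilberger algorithm with symbolic $n$. I would verify the result by checking the certificate identity as a rational-function identity in $(x,y,n)$. If that is too heavy, an alternative is to work directly on the double integral: apply Zeilberger-style telescoping in $H$ to the integrand $x^{6n}(1-x)^H\cdot(\dots)$, using certificates in both $x$ and $y$. That yields the recurrence in $H$ directly, and the vanishing of boundary terms is checked at $x,y\in\{0,1\}$ and on the residue contour.
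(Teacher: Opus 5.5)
\textbf{A step that fails as written.} Your main route (a third-order ODE in $x$ for the inner integral via creative telescoping in $y$, then integration by parts against $(1-x)^H$) differs from the paper's, which is essentially your fallback. The paper writes down one bivariate identity $\partial_x(p_0Pw)+\partial_y(q_0Qw)=w\sum_j b_j(H,n)(1-x)^j$ for $w=x^{6n}(1-x)^H\phi_n$ in the $\eta$-coordinates. It checks the fluxes at $x=0,1$, at $\eta=0$, at the joint corner and as $\eta\to-\infty$; that decay is where $n\ge2$ enters. It then divides by $5n(H+n+3)(H+n+4)$ and shifts $H$.

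Your route is viable, but your last step, ``normalise and check that the coefficients agree with $c_0,\dots,c_5$'', is the wrong test. The minimal telescoper annihilates the $\eta$-integrals over $[0,1]$, over $(-\infty,0]$, and around $\eta=x$. Its singular points are $0,1,5,\infty$, with exponents $\{0,1,-n\}$, $\{0,1,-n\}$, $\{0,1,-12n-1\}$ and, in $1/x$, $\{5n+1,5n+2,4n+1\}$. These sum to $6$, so there are no apparent singularities. For $g_n=x^{6n}G_n$ the exponents at $0$ become $\{5n,6n,6n+1\}$. After minimal clearing of denominators, the leading coefficient is therefore $x^3(x-1)(x-5)=t(t+4)(1-t)^3$ with $t=1-x$. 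The resulting moment recurrence involves only $F_n(H-2),\dots,F_n(H+2)$: five terms, with characteristic polynomial $(S-1)^3(S+4)$. Your order heuristic (degree plus order of $P$) is also off. The number of steps is the degree of the leading coefficient minus its order of vanishing at $x=1$; compare Lemma~\ref{q:lem:ode}, where $x^2(x+4)$ gives four terms.

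This is consistent with the statement only because the stated operator is not minimal. A direct expansion gives
\[
 \sum_{j=0}^{5}c_j(H-j,n)\equiv0 .
\]
This is exactly the condition for $\sum_j c_j(H,n)S^j=(S-1)\circ\sum_{j=0}^{4}r_j(H)S^j$ with $r_j(H)=-\sum_{i=0}^{j}c_i(H+j-i,n)$. In particular $r_0=-c_0$ and $r_4(H)=c_5(H-1)$, which match, up to normalisation, the extreme coefficients your ODE produces. Multiplying your ODE by one extra factor $x=1-t$ before integrating corresponds precisely to applying $S-1$. So you must either show that the stated operator is a left multiple of the one you derive (for instance by identifying yours with this quotient and then applying $S-1$), or insert that extra factor. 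A coefficient-by-coefficient match will not succeed.

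\textbf{Smaller points and comparison.} The inner integral grows like $x^{-n}$ as $x\to0$; compare the limit of $x^ng_n(x)$ in the proof of Proposition~\ref{f:prop:positivity}. So your $g_n$ is $O(x^{5n})$, not $O(x^{6n})$, which still kills the boundary terms at $x=0$. With $y\in[0,1]$ there is no infinite endpoint, so $n\ge2$ plays no role in your version, and $H\ge6n$ is far more than the boundary terms at $x=1$ need. The paper's single polynomial certificate yields exactly the stated relation with no reconciliation step, at the cost of a boundary analysis on an unbounded ray. Your route exposes the underlying ODE and a shorter recurrence, whose leading coefficient $c_5(H-1)>0$ would even show that four consecutive values cannot all vanish. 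It does, however, require two computations and the factorization check above.
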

\begin{proof}[Certificate and boundary terms]
Let
\[
 w=x^{6n}(1-x)^H\phi_n(x,y),\qquad
 p_0=x(1-x)(x-y),\quad q_0=y(1-y)(y-5)(x-y).
\]
The certificate file \texttt{contiguous\_certificate.json} supplies $P,Q$ and coefficients $b_0,\ldots,b_6$, with $b_0=0$, satisfying the polynomial identity
\begin{equation}\label{f:eq:telescoper}
 \partial_x(p_0Pw)+\partial_y(q_0Qw)
 =w\sum_{j=0}^6 b_j(H,n)(1-x)^j.
\end{equation}
After cancelling $w$, its verification is a polynomial calculation over $\Z[x,y,H,n]$. The checker \texttt{verify\_recurrence.py} performs it directly, without rerunning the nullspace search used to find $P,Q$.

The raw identity can be integrated for $n\ge2$ and every integer
$H\ge0$, using the explicit integral representations. For the real
integral~\eqref{f:eq:transformed-zero}, the $x$ boundary terms vanish
because of $x^{6n+1}(1-x)^{H+1}$, and the $y=0$ term also vanishes.
At the joint corner $x=y=0$, both fluxes have an additional quadratic
vanishing factor relative to the weight, and are
$O((x+|y|)^{5n+1})$. The finite endpoint contributions thus vanish.

Write $U=p_0Pw$ and $V=q_0Qw$. The certificate satisfies
$\deg_yP\le4$ and $\deg_yQ\le2$, so as $y\to-\infty$,
\[
 U=O(|y|^{3-4n}),\qquad V=O(|y|^{4-4n}).
\]
The first flux is integrable and the second tends to zero for $n\ge2$.
For the contour integral, use a fixed closed $y$ contour away from
the endpoints. The $y$ flux is a derivative of a single-valued rational
function and its integral is zero; the $x$ boundary terms vanish as
before. Thus~\eqref{f:eq:telescoper} integrates to zero for both
$F_n^{(0)}$ and $F_n^{(1)}$, and hence for $F_n$.

The common factor in $b_1,\ldots,b_6$ is $5n(H+n+3)(H+n+4)$. Dividing by it and replacing $H$ by $H-1$ gives exactly the displayed $c_j$. The raw identity is available at $H-1$ in the stated range, and these operations introduce no zero denominator there.
\end{proof}

\begin{proposition}[Eventual positivity for fixed $n$]\label{f:prop:positivity}
For every fixed integer $n\ge1$,
\begin{equation}\label{f:eq:eventual-positive}
 F_n(H)\sim\frac{(4n)!(n-1)!}{5^{7n+1}}\,H^{-(5n+1)}
 \qquad(H\longrightarrow\infty).
\end{equation}
In particular, $F_n(H)>0$ for all sufficiently large $H$.
\end{proposition}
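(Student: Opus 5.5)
The plan is a Laplace-type endpoint analysis at $x=0$, as in Proposition~\ref{q:prop:nonzero}. For $H\to\infty$ the weight $(1-x)^H$ concentrates the $x$-integral at $x=0$, so the asymptotics of $F_n(H)$ depend only on the leading singular behaviour at $x=0$ of the combined inner function. To isolate it, write
\[
 F_n(H)=\int_0^1 x^{6n}(1-x)^H\,G_n(x)\dd x,\qquad
 G_n(x)=(-1)^{11n}\Bigl(\int_0^{-\infty}\phi_n(x,\eta)\dd\eta
 -\frac{\log5}{2\pi i}\oint_{\Gamma}\phi_n(x,\eta)\dd\eta\Bigr).
\]
Here the second integral is obtained from \eqref{f:eq:transformed-one} by interchanging the $x$- and $\eta$-integrations. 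This is legitimate for $x$ in compact subsets of $(0,1]$, and the endpoint $x\to0$ is handled by the estimates below. The target is
\[
 G_n(x)=\kappa_n x^{-n}+O\bigl(x^{-n+1}\log(1/x)\bigr)\qquad(x\to0^+),
\]
together with a bound for $G_n$ on $[\tfrac12,1]$ by a constant depending only on $n$. Then the beta integral gives
\[
 \int_0^1 x^{5n}(1-x)^H\dd x=(5n)!\,H^{-(5n+1)}\bigl(1+O(H^{-1})\bigr),
\]
while the error term contributes $O(H^{-(5n+2)}\log H)$. Hence $F_n(H)\sim\kappa_n(5n)!\,H^{-(5n+1)}$.

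\textbf{The residue part.} For $\eta$ on a fixed contour $\Gamma$ around $[0,1]$ and away from $5$, the factor $(x-\eta)^{-5n-1}$ is singular only when $x$ meets $\Gamma$. I would take $\Gamma$ to pass through $0$ and $1$ in the limiting sense already used in \S\ref{f:sec:contours}. Equivalently, I would use the fact that the residue term is the logarithmic coefficient of the Gauss-type local solution at $x=0$, exactly as in Lemma~\ref{q:lem:ode}. In either form the residue contribution is analytic at $x=0$, or at worst of order $O(x^{-n+1})$. It therefore does not affect the leading term.

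\textbf{The real part.} Substitute $\eta=-x\sigma$. Then $\eta^{4n}(x-\eta)^{-5n-1}\dd\eta$ becomes $-x^{-n}\sigma^{4n}(1+\sigma)^{-5n-1}\dd\sigma$, while $(1-\eta)^{4n}(\eta-5)^{-7n-1}$ tends to $(-5)^{-7n-1}$ for fixed $\sigma$. Dominated convergence is applied on $\sigma\le x^{-1/2}$. On the tail $\sigma>x^{-1/2}$, a crude bound using the decay $|\eta|^{8n-5n-1-7n-1}$ as $\eta\to-\infty$ shows that the tail is $O(x^{-n+1/2})$. A sharper expansion, via the hypergeometric representation with local exponents $0$ and $-n$ at $x=0$, gives $O(x^{-n+1}\log(1/x))$, but the cruder bound already suffices for the asymptotic equivalence. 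This yields
\[
 \kappa_n=(-1)^{11n}\cdot(-1)\cdot(-5)^{-7n-1}\,\Bfun(4n+1,n)
 =\frac{(4n)!\,(n-1)!}{5^{7n+1}(5n)!}.
\]
The signs combine to $(-1)^{18n+2}=1$. Multiplying by $(5n)!$ gives exactly \eqref{f:eq:eventual-positive}, and positivity for large $H$ follows because the constant is positive.

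The main obstacle is making the error control near $x=0$ uniform enough to be integrated against $x^{6n}(1-x)^H$. This requires uniform tail bounds in $\eta$ for the real integral and the analyticity (or mild singularity) of the residue part. Away from $x=0$ the integrand is bounded and $(1-x)^H$ is exponentially small, so the region $x\ge\delta$ contributes $O((1-\delta)^H)$, which is negligible.
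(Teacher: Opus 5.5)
Your proposal is correct and is essentially the paper's proof. The paper substitutes $y=xt/5$ in the $y$-form of the inner integral, which is equivalent to your $\eta=-x\sigma$ via $y=\eta/(\eta-5)$, and gets $x^ng_n(x)\to(4n)!(n-1)!/(5^{7n+1}(5n)!)$ by dominated convergence with one global majorant; this makes your split at $\sigma=x^{-1/2}$ and any error rate unnecessary. It then passes to $H\to\infty$ via $x=t/H$ and $(1-t/H)^H\le e^{-t}$, and handles the residue part by noting that the inner contour integral is $\Res_{\eta=x}\phi_n(x,\eta)$, a rational function with no pole on $[0,1]$, so $F_n^{(1)}(H)=O(H^{-6n-1})$; this is the clean form of your first residue argument, whereas your alternative appeal to Lemma~\ref{q:lem:ode} is not available as stated, since that lemma concerns the Gauss integrand of Section~\ref{sec:quarter}, not this one with the extra factor $(1+4y)^{4n}$.
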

\begin{proof}
Write
\[
 g_n(x)=5^{-3n}\int_0^1
 \frac{y^{4n}(1-y)^{4n}(1+4y)^{4n}}
 {[x+(5-x)y]^{5n+1}}\dd y,
\]
so that $F_n^{(0)}(H)=\int_0^1x^{6n}(1-x)^Hg_n(x)\dd x$.
The substitution $y=xt/5$ gives
\begin{equation}\label{f:eq:inner-scaling}
 x^ng_n(x)=5^{-7n-1}\int_0^{5/x}
 \frac{t^{4n}(1-xt/5)^{4n}(1+4xt/5)^{4n}}
 {[1+(1-x/5)t]^{5n+1}}\dd t.
\end{equation}
For $0<x\le1$, the numerator factors other than $t^{4n}$ are bounded
by $5^{4n}$ on this integration interval, and the denominator is at
least $(1+4t/5)^{5n+1}$. After extending the integrand by zero outside
the interval, an integrable majorant on $[0,\infty)$ is therefore a
constant depending on $n$ times
\[
 \frac{t^{4n}}{(1+4t/5)^{5n+1}}.
\]
Its tail is $O(t^{-n-1})$. Dominated convergence yields
\begin{equation}\label{f:eq:inner-limit}
 \lim_{x\downarrow0}x^ng_n(x)
 =5^{-7n-1}\int_0^\infty\frac{t^{4n}}{(1+t)^{5n+1}}\dd t
 =\frac{(4n)!(n-1)!}{5^{7n+1}(5n)!}.
\end{equation}

Set $b_n(x)=x^ng_n(x)$. The same majorant shows that $b_n$ is
bounded on $(0,1]$. Substituting $x=t/H$ gives
\[
 H^{5n+1}F_n^{(0)}(H)=
 \int_0^H t^{5n}(1-t/H)^H b_n(t/H)\dd t.
\]
Since $(1-t/H)^H\le e^{-t}$, another application of dominated
convergence shows that the limit is
$(5n)!b_n(0)=(4n)!(n-1)!/5^{7n+1}$.

For the residue component, put
\begin{equation}\label{f:eq:residue-function}
 A_n(x)=\Res_{\eta=x}\phi_n(x,\eta)
 =\frac{(-1)^{5n+1}}{(5n)!}\frac{d^{5n}}{dx^{5n}}
 \left(\frac{x^{4n}(1-x)^{4n}}{(x-5)^{7n+1}}\right).
\end{equation}
It is rational with no pole on $[0,1]$, hence bounded there. The residue
formula in~\eqref{f:eq:transformed-one} gives
\[
 F_n^{(1)}(H)=(-1)^{11n}\int_0^1x^{6n}(1-x)^H A_n(x)\dd x
 =O(H^{-6n-1}).
\]
This is smaller than the real component by a factor $O(H^{-n})$.
Subtracting $(\log5)F_n^{(1)}(H)$ proves~\eqref{f:eq:eventual-positive}.
\end{proof}

\begin{lemma}\label{f:lem:nonzero}
For every $n\ge2$, five consecutive values
$F_n(H),\ldots,F_n(H+4)$ with $H\ge6n$ cannot all vanish.
In particular, at least one of $J_{n,0},\ldots,J_{n,5}$ is nonzero.
\end{lemma}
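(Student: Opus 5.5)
The plan mirrors the argument of Proposition~\ref{q:prop:nonzero}. I will combine the order-five recurrence of Lemma~\ref{f:lem:recurrence} with the eventual positivity of Proposition~\ref{f:prop:positivity}.

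Suppose, for contradiction, that $F_n(H_0)=F_n(H_0+1)=\cdots=F_n(H_0+4)=0$ for some $n\ge2$ and some $H_0\ge6n$. I apply \eqref{f:eq:recurrence} with $H=H_0$. Since $H_0\ge6n$ lies in the admissible range and $c_5(H_0,n)=(H_0+n+4)(H_0+n+5)(H_0+2n+5)>0$, I can solve for $F_n(H_0+5)$ as a linear combination of the five vanishing values, so $F_n(H_0+5)=0$. Replacing $H_0$ by $H_0+1$ keeps us in the range $H\ge6n$, and the leading coefficient is again strictly positive. Induction then gives $F_n(H)=0$ for every $H\ge H_0$.

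This contradicts Proposition~\ref{f:prop:positivity}. That result gives the asymptotic \eqref{f:eq:eventual-positive} with the nonzero constant $(4n)!(n-1)!/5^{7n+1}$, so $F_n(H)>0$ for all sufficiently large $H$. Hence five consecutive values with $H\ge6n$ cannot all vanish.

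For the second assertion I take $H_0=6n$. Then $F_n(6n),\ldots,F_n(6n+4)$ are exactly $J_{n,0},\ldots,J_{n,4}$, so already one of the first five shifted forms is nonzero, and a fortiori one of $J_{n,0},\ldots,J_{n,5}$.

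The real work has already been done in the two cited results: the exact telescoping certificate that yields the recurrence, and the dominated-convergence asymptotic. The only points to check here are that every index used in the induction satisfies $H\ge6n$ and $n\ge2$, the hypotheses of Lemma~\ref{f:lem:recurrence}, and that $c_5$ never vanishes there, which is immediate from its factored form. The case $n=1$ is excluded from the statement. It is also irrelevant for Lemma~\ref{f:lem:arith}, which needs $n\ge1000$.
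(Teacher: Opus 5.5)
Your proof is correct and is essentially the paper's argument: you propagate the five zeros forward through the recurrence of Lemma~\ref{f:lem:recurrence}, using $c_5(H,n)>0$, and then contradict the fixed-$n$ asymptotic of Proposition~\ref{f:prop:positivity}. Your check that the induction stays in the range $H\ge6n$, and your choice $H_0=6n$ for the second assertion, simply make explicit what the paper leaves implicit.
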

\begin{proof}
If a block of five values vanished, Lemma~\ref{f:lem:recurrence} and
$c_5(H,n)>0$ would force all subsequent values to vanish.
For that fixed $n$, this contradicts Proposition~\ref{f:prop:positivity}.
\end{proof}

\begin{remark}
The limit in Proposition~\ref{f:prop:positivity} keeps $n$ fixed;
no uniformity in $n$ is needed. It does not assert that the bounded-shift
forms themselves are positive. Five shifts suffice for the
nonvanishing argument, but we retain the six shifts already covered by
the arithmetic and contour certificates.
\end{remark}

\subsection{Closing the irrationality argument}\label{f:sec:conclusion}
Combine Lemmas~\ref{f:lem:arith}, \ref{f:lem:integral-bound} and \ref{f:lem:nonzero}. For all sufficiently large $n$, the six numbers $K_nJ_{n,r}$ have integer coefficient vectors, at least one is nonzero, and
\[
 \max_{0\leq r\leq5}|K_nJ_{n,r}|
 \leq6\exp\{-(\gamma+o(1))n\},
\]
where
\begin{align}\label{f:eq:gap}
 \gamma&=\log(150000000)-C\\
 &=\log75000-15+\frac{708641}{180180}
 =0.15820487547993045413\ldots.\notag
\end{align}
There is a short exact positivity check. The identity
\[
 \log z=2\sum_{k\geq0}\frac{1}{2k+1}\left(\frac{z-1}{z+1}\right)^{2k+1}
 \quad(z>1)
\]
has positive terms. Its first $2,3,4$ terms at $z=2,3,5$, respectively, give
\[
 \log2>\frac{56}{81}>\frac{69}{100},\quad
 \log3>\frac{263}{240}>\frac{109}{100},\quad
 \log5>\frac{122492}{76545}>\frac85.
\]
Since $75000=2^3\cdot3\cdot5^5$ and $S>393/100$, this proves
\[
 \gamma>3\frac{69}{100}+\frac{109}{100}+5\frac85-15+\frac{393}{100}
 =\frac9{100}>0.
\]
Thus the maximum of the absolute values of the six integer linear forms tends to zero. If $\theta=a/b\in\Q$, every nonzero such form would have absolute value at least $1/|b|$. This contradicts Lemma~\ref{f:lem:nonzero} and proves the $1/5$ assertion in Theorem~\ref{thm:main}.

\section{The crossbreed on the negative side at $-1/3$}\label{sec:third}
We now retain the same six-parameter ray and inherited factorial mechanism, but continue the integral to $z=-3$.  The arithmetic is therefore structurally the same as at $1/5$, while the analytic side uses a compact complex contour together with a real tail.
\subsection{Construction and target estimates}
Put
\[
 \theta=\Li_2(-1/3)=\sum_{k=1}^{\infty}\frac{(-1)^k}{3^k k^2},
 \qquad d_N=\lcm(1,\ldots,N).
\]
We use the normalization of Viola--Zudilin \cite{VZ18}. For fixed integer
$n\ge1$ and integer $H\ge0$, let
\[
 F_n(H)=J_{-3}(H,6n,4n,4n,5n,3n),\qquad
 J_{n,r}=F_n(6n+r),\quad 0\le r\le5.
\]
For $H$ outside the admissible arithmetic range, $F_n(H)$ is understood
through the continued integral below. We invoke the arithmetic theorem only
for the six displayed shifts.

For $z>1$ the original real component is
\begin{equation}\label{t:eq:original}
 J_z^{(0)}(H,6n,4n,4n,5n,3n)
 =z^{-3n}\int_0^1\!\int_0^1
 \frac{x^{6n}(1-x)^H y^{4n}(1-y)^{4n}(1-y+zy)^{4n}}
 {[x(1-y)+zy]^{5n+1}}\dd y\dd x.
\end{equation}
The mixed form is $J_z=J_z^{(0)}-(\log z)J_z^{(1)}$, with the standard
positively oriented inner residue contour defining $J_z^{(1)}$.
The real-square formula \eqref{t:eq:original} is \emph{not} used directly at
$z=-3$, where its denominator has an interior zero.

The argument establishes, for a common positive integer $K_n$,
\begin{equation}\label{t:eq:three}
 \begin{gathered}
 K_nJ_{n,r}\in\Z\theta+\Z\quad(n\ge1000,\ 0\le r\le5),\\
 |J_{n,r}|<14(26000000)^{-n}\quad(n\ge1,\ 0\le r\le5),\\
 (J_{n,0},\ldots,J_{n,5})\ne(0,\ldots,0)\quad(n\ge2),
 \end{gathered}
\end{equation}
and
\[
 \log26000000-\lim_{n\to\infty}\frac{\log K_n}{n}>
 \frac1{20}.
\]

\subsection{The negative-argument continuation}\label{t:sec:continuation}
The substitution $y=\eta/(\eta-z)$ in \eqref{t:eq:original} gives
\[
 \sigma_n=(-1)^{11n},\qquad
 \phi_n(x,\eta)=
 \frac{\eta^{4n}(1-\eta)^{4n}}
 {(x-\eta)^{5n+1}(\eta+3)^{7n+1}}.
\]
Following \cite[(2.7)--(2.11)]{VZ18} and the negative-$z$ implementation in \cite[\S2]{RV19}, choose the ray $[0,-i\infty)$ in the
$\eta$-plane and continue $z$ to $-3$ through the upper half-plane. Then
\begin{align}
 F_n^{(0)}(H)&=\sigma_n\int_0^1 x^{6n}(1-x)^H
       \int_0^{-i\infty}\phi_n(x,\eta)\dd\eta\dd x,
       \label{t:eq:vertical}\\
 F_n^{(1)}(H)&=\frac{\sigma_n}{2\pi i}\int_\Gamma\int_0^1
       x^{6n}(1-x)^H\phi_n(x,\eta)\dd x\dd\eta,
       \label{t:eq:residue}\\
 F_n(H)&=F_n^{(0)}(H)-(\log3+i\pi)F_n^{(1)}(H).
       \label{t:eq:mixed}
\end{align}
Here $\Gamma$ positively encloses $[0,1]$ and excludes $-3$. A fixed contour
away from the endpoints can always be used in \eqref{t:eq:residue}.
These formulas fix the branch and orientation. There is no additional
exponential power of $3$ in \eqref{t:eq:vertical}.

The ray $[0,-i\infty)$ avoids both poles $\eta=x$ and $\eta=-3$.
At infinity $\phi_n=O(|\eta|^{-4n-2})$, and near the joint endpoint
$x=\eta=0$ the full weight is $O((x+|\eta|)^{5n-1})$.
Thus the vertical integral converges absolutely. The published polynomial
coefficient identities continue to this branch. In particular, the forms in
the admissible range are real, despite the complex representation
\eqref{t:eq:mixed}. An explicit real representation is given in
Section~\ref{t:sec:contours}.

\subsubsection{An explicit algebraic check of the branch}
For the lower ray, the logarithmic convention is precisely
\cite[(2.10)]{VZ18}. The following identity makes its cancellation inspectable
without using the contour estimate later in the proof. A fixed residue
contour such as $|\eta-1/2|=3/4$ encloses $[0,1]$ and avoids the entire
continuation path $z=3e^{it}$, $0\le t\le\pi$.

\begin{lemma}[Rational-logarithmic branch identity]\label{t:lem:branch-identity}
For fixed $n\ge1$ and $0<x<1$, let
\[
 \rho_n(x)=\Res_{\eta=x}\phi_n(x,\eta),\qquad
 g_n(x)=\sigma_n\int_0^{-i\infty}\phi_n(x,\eta)\dd\eta.
\]
There is $R_n(x)\in\Q(x)$ such that
\begin{equation}\label{t:eq:branch-identity}
 g_n(x)=\sigma_n\bigl[R_n(x)+\rho_n(x)
       \{\log(3/x)+i\pi\}\bigr].
\end{equation}
Consequently, for every integer $H\ge0$,
\begin{equation}\label{t:eq:real-density}
 F_n(H)=\sigma_n\int_0^1x^{6n}(1-x)^H
       \{R_n(x)-\rho_n(x)\log x\}\dd x.
\end{equation}
\end{lemma}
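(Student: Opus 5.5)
Fix $0<x<1$. The integrand $\phi_n(x,\cdot)$ is rational in $\eta$, with poles only at $\eta=x$ (order $5n+1$) and $\eta=-3$ (order $7n+1$), and it decays like $O(|\eta|^{-4n-2})$ at infinity. The plan is to compute $\int_0^{-i\infty}\phi_n\,d\eta$ by partial fractions. Write
\[
 \phi_n(x,\eta)=\sum_{k=1}^{5n+1}\frac{\alpha_k(x)}{(\eta-x)^k}
 +\sum_{k=1}^{7n+1}\frac{\beta_k(x)}{(\eta+3)^k},
\]
with $\alpha_k,\beta_k\in\Q(x)$. There is no polynomial part, because of the decay at infinity. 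The same decay gives $\alpha_1+\beta_1=0$, and by definition $\alpha_1=\rho_n(x)$.

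The terms with $k\ge2$ integrate to rational functions of $x$. Their antiderivatives $-(\eta-c)^{1-k}/(k-1)$ vanish at $-i\infty$, so each contributes its value at $\eta=0$, which lies in $\Q(x)$. These contributions together with the sign $\sigma_n$ make up $R_n(x)$.

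The simple-pole part is $\rho_n(x)\int_0^{-i\infty}\bigl[(\eta-x)^{-1}-(\eta+3)^{-1}\bigr]d\eta$. This equals $\rho_n$ times the limit of $\Log\frac{\eta-x}{\eta+3}$ at $-i\infty$ minus its value at $\eta=0$. The quotient tends to $1$ at infinity, so the first term is $0$. Along the lower ray, $\arg(\eta-x)$ runs continuously from $\pi$ (at $\eta=0$, where $\eta-x=-x$) down to $\pi/2$, while $\arg(\eta+3)$ runs from $0$ to $-\pi/2$. Tracking these arguments continuously back from infinity, where both are equal, gives the boundary value $\log(x/3)-i\pi$, so the simple-pole integral is $\log(3/x)+i\pi$. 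The step needing care is fixing this $i\pi$ consistently with the branch choices, the ray, and the upper-half-plane continuation of \cite[(2.10)]{VZ18}. I will check it by taking argument differences along the ray, not by using a principal $\Log$ of the quotient. Collecting the terms gives \eqref{t:eq:branch-identity}.

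For \eqref{t:eq:real-density}, note that $\Gamma$ encloses $[0,1]$ but not $-3$. Swapping the order of integration in \eqref{t:eq:residue} (the contour is fixed, so Fubini applies) turns the inner $\eta$-integral into $2\pi i\,\rho_n(x)$. Hence $F_n^{(1)}(H)=\sigma_n\int_0^1x^{6n}(1-x)^H\rho_n(x)\,dx$. Substituting into \eqref{t:eq:mixed} together with \eqref{t:eq:vertical}, the $\rho_n\{\log3+i\pi\}$ terms cancel exactly, leaving $R_n-\rho_n\log x$.

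Integrability near $x=0$ still has to be confirmed, since $R_n$ and $\rho_n$ separately may have poles at $x=0$. The combination $g_n$ is controlled by the bound $O((x+|\eta|)^{5n-1})$ on the full weight, so $x^{6n}g_n$ is integrable. The residue term $\rho_n$ is a derivative of a rational function that is regular at $0$ (compare \eqref{f:eq:residue-function}), so it is bounded. Therefore $x^{6n}R_n$ is integrable as well, and the identity holds for all $H\ge0$.
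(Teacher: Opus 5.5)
Your proposal is correct and follows the paper's proof essentially step for step: partial fractions in $\eta$, $\alpha_1+\beta_1=0$ from the decay at infinity, the higher-order poles evaluated at $\eta=0$ giving $R_n$, the simple-pole pair giving $\log(3/x)+i\pi$, and cancellation against $(\log3+i\pi)F_n^{(1)}(H)$ once the inner $\eta$-integral over $\Gamma$ is reduced to $2\pi i\,\rho_n(x)$ (your integrability remark near $x=0$ also fills in a point the paper leaves implicit). One slip occurs in the step you flagged yourself: on the lower ray $\eta-x=-x-is$ lies in the third quadrant, so its continuous argument runs from $-\pi$ \emph{up} to $-\pi/2$ (equivalently from $\pi$ up to $3\pi/2$), not down to $\pi/2$, and with the first normalisation both arguments tend to $-\pi/2$ at infinity, which is what actually yields your (correct) boundary value $\log(x/3)-i\pi$.
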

\begin{proof}
Put $N=5n+1$, $M=7n+1$ and expand exactly
\[
 \phi_n(x,\eta)=\sum_{j=1}^{N}\frac{a_j(x)}{(\eta-x)^j}
              +\sum_{j=1}^{M}\frac{b_j(x)}{(\eta+3)^j}.
\]
There is no polynomial part. Since $\phi_n=O(\eta^{-4n-2})$ at infinity, $a_1+b_1=0$, and $a_1=\rho_n$.
The simple-pole integral is
\[
 a_1\bigl[\Log(\eta-x)-\Log(\eta+3)\bigr]_0^{-i\infty}
 =a_1\{\log(3/x)+i\pi\},
\]
because the lower-half-plane boundary value at $-x$ is $\log x-i\pi$. The remaining terms give the real rational function
\[
 R_n(x)=\sum_{j=2}^{N}\frac{a_j(x)(-x)^{1-j}}{j-1}
       +\sum_{j=2}^{M}\frac{b_j(x)3^{1-j}}{j-1}.
\]
This proves \eqref{t:eq:branch-identity}. Subtracting $(\log3+i\pi)$ times the residue integral gives \eqref{t:eq:real-density}.
\end{proof}

\subsection{The inherited arithmetic divisor}\label{t:sec:arithmetic}
\subsubsection{A common multiplier}
Define $\mu$ on $[0,1)$ by the following table and set it to zero elsewhere,
including the displayed endpoints.
\begin{center}
\begin{tabular}{cc@{\qquad}cc}
\toprule
interval & $\mu$ & interval & $\mu$\\
\midrule
$(1/6,2/7)$&1&$(2/3,5/7)$&1\\
$(1/3,3/7)$&1&$(3/4,5/6)$&1\\
$(1/2,4/7)$&1&$(5/6,6/7)$&2\\
\bottomrule
\end{tabular}
\end{center}
For $n\ge1000$ put
\begin{align}
 E_n&=\prod_{d=1}^{14}\prod_{b=-6}^{6}(dn+b),\label{t:eq:E}\\
 \Delta_n&=\prod_{\substack{p\ \mathrm{prime},\ p>n/4\\p\nmid E_n}}
       p^{\mu(\{n/p\})},\label{t:eq:Delta}\\
 D_n&=d_{8n+5}d_{7n+5}\,3^{3n}4^{2n},\qquad K_n=D_n/\Delta_n.
       \label{t:eq:K}
\end{align}
Every prime with nonzero exponent in \eqref{t:eq:Delta} is less than $6n$ and
has exponent at most two. Both l.c.m. factors in $D_n$ contain that prime,
so $\Delta_n\mid D_n$ and $K_n$ is a positive integer.

\begin{lemma}\label{t:lem:arithmetic}
For $n\ge1000$ and $0\le r\le5$, one has $K_nJ_{n,r}\in\Z\theta+\Z$.
\end{lemma}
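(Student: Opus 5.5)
The plan is to repeat the proof of Lemma~\ref{f:lem:arith} almost verbatim, with $z=5$ replaced by $z=-3$. The one new point is that every arithmetic input must be available at a negative argument. First I will fix the direct denominator. By \cite[Lemma 2.1]{VZ18}, continued to $z<0$ as in \cite[\S2]{RV19}, the coefficient identities of $J_z$ are polynomial identities in $z$. Evaluating them at $z=-3$ along the branch of \S\ref{t:sec:continuation} gives
\[
 d_{8n+r}d_{7n+r}3^{3n}4^{\max(0,2n-r)}J_{n,r}\in\Z\theta+\Z .
\]
The factor $3^{3n}$ replaces $5^{3n}$ through the power $z^{-l-m}$ and its analogue. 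Hence $D_n$ clears every coefficient for each $r\le5$. It remains to show that each prime $p\mid\Delta_n$ may be removed with exponent $\mu(\{n/p\})$.

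Second, I will transfer the two binomial expansions \eqref{f:eq:expansion-A} and \eqref{f:eq:expansion-B} to $z=-3$. These are identities between rational coefficient vectors of the forms $J_z$ and $I_z(t_A(\lambda))$, $I_z(t_B(\lambda))$. Powers of $5$ become the corresponding powers of $z=-3$, which are units at every prime $p>n/4$ with $p\nmid E_n$, since such $p$ exceed $3$. The Rhin--Viola factorial transfer \eqref{f:eq:coefficient-transfer} and the denominator bound $d_{\mathcal H(s)}d_{\mathcal K(s)}$ hold as identities in $\Q[z]$, so they also specialize to $z=-3$. This is exactly what \cite[\S2]{RV19} provides for the $\Phi$-invariance at negative $z$. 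Consequently the local cost $e_p(t,\lambda)$ from \eqref{f:eq:ep} bounds the $p$-adic denominator of each summand, exactly as before.

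Third, I will reduce to the finite floor inequality \eqref{f:eq:floor-claim}. The tuples, orbits, $\mu$ and $E_n$ coincide with those in Section~\ref{sec:fifth}, so the floor data do not depend on $z$. The table of $\mu$ here is the same as before, and the witness table and verifier output for cells in $0<n/p<2$ are reused unchanged. The new feature is the range $p>n/4$, which brings in cells with $2<n/p<4$. Because $\mu$ vanishes on $(0,1/6)$ and the listed intervals lie in $(1/6,6/7)$, such cells can carry nonzero $\mu(\{n/p\})$. There are two routes, and I would check the second first.

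\begin{itemize}
\item[(a)] Extend Lemma~\ref{f:lem:floor-stability} to $0<x<4$ with cut points $i/d$ for $i\le4d$, and rerun the witness check on the enlarged cell set. This requires $14n+10<p^2$, which holds for $p>n/4$ and $n\ge1000$.
\item[(b)] Observe that primes with $n/p>2$ are already counted by the asymptotic of $\Delta_n$, so the checker must in any case certify those periods.
\end{itemize}

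I expect the main obstacle to be the witness check in the periods $2<n/p<4$. There the floors $\lfloor s_i/p\rfloor$ take values up to $3$, and the orbit minimum might not reproduce the savings seen in the first two periods. If some cell fails, I would lower $\mu$ on that period, or restrict $\Delta_n$ to $p>n/2$ on that cell. Either change costs only a small amount of the final exponential margin, which must stay above $1/20$, and this is what decides whether the quoted margin survives. The remaining checks, namely nonnegativity of orbit tuples, the bounds $|e|\le6$ in the separation argument, and $p>n/4>|e|$, go through as in the $1/5$ case.
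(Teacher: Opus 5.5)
Your route (a) is essentially the paper's proof. The shared steps are: the continued VZ Lemma~2.1 baseline with the unsymmetrized exponent $4^{\max(0,2n-r)}$; the two binomial expansions A and B, whose powers of $-3$ are units; the summand-by-summand Rhin--Viola factorial transfer; and the floor-stability argument extended to $0<n/p<4$ (216 cells), checked with the same witness table as at $1/5$. The paper reports that this check passes on all four periods and all six shifts. Your fallbacks are therefore unnecessary; note that they would change $\Delta_n$ and hence would not prove the lemma as stated.
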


\subsubsection{The two expansions and the factorial transfer}
Lemma~2.1 of \cite{VZ18}, continued as above, first gives
\[
 d_{8n+r}d_{7n+r}\,3^{3n}4^{\max(0,2n-r)}J_{n,r}\in\Z\theta+\Z.
\]
The precise source normalization is important. In the initial
Lemma~2.1 of \cite{VZ18}, the exponent of $1-z$ is
$\beta=\max(0,k+l-h)$, hence $\max(0,2n-r)$ here. The later symmetrized
Lemma~3.3 uses the larger exponent $\max(0,k+l-h,l+m-j)$, which would be
$3n$ on this ray. We do not use that enlargement: the opening paragraph of
its proof explicitly continues the polynomial identities (2.2)--(2.3) of
Lemma~2.1 to the full domain of holomorphy, before symmetrizing the bounds.
Thus the sharper exponent above remains valid on the negative branch, and
no additional factor $4^n$ is required.

Consequently $D_n$ clears all coefficients. To improve this denominator,
write $I_z(h,j,k,l,m)$ for the mixed Rhin--Viola integral. The first binomial
expansion in \cite[Lemma 2.1]{VZ18}, first for the original tuple and then
for its elementary involution $(h,j,k,l,m,q)\mapsto(j,h,m,l,k,q)$, gives
\begin{align}
 J_{n,r}&=\sum_{\lambda=0}^{4n}\binom{4n}{\lambda}(-3)^{10n+\lambda}
 I_{-3}(6n+r,6n,4n+\lambda,8n-\lambda,5n+\lambda),\label{t:eq:A}\\
 J_{n,r}&=\sum_{\lambda=0}^{5n+r}\binom{5n+r}{\lambda}(-3)^{11n+r+\lambda}
 I_{-3}(6n,6n+r,5n+\lambda,9n+r-\lambda,4n+\lambda).\label{t:eq:B}
\end{align}
Call these expansions A and B. The powers of $-3$ are units at the primes
used in \eqref{t:eq:Delta}.

For a five-tuple $t=(h,j,k,l,m)$ set
\[
 u_1=l+m-j,\quad u_2=h+m-k,\quad u_3=h+j-l,\quad u_4=j+k-m,
\]
\[
 \mathcal H(t)=\max(u_1,u_2,u_3,u_4),\qquad
 \mathcal K(t)=\max(u_1,\min(u_2,u_3),u_4),\qquad
 T(t)=h!j!k!l!m!.
\]
At primes different from $2,3$, Theorem~2.1 of \cite{RV05} clears the
coefficient denominators of $I_{-3}(t)$ with
$d_{\mathcal H(t)}d_{\mathcal K(t)}$. The transformations
\[
 (h,j,k,l,m)\mapsto(h,m,l,k,j),\qquad
 (h,j,k,l,m)\mapsto(h+m-k,j+k-m,m,l,k)
\]
generate an orbit $\mathcal O(t)$ of size at most twelve. Their factorial
identities give, for the rational coefficient vectors,
\begin{equation}\label{t:eq:factorial}
 \boldsymbol c(t)=\frac{T(t)}{T(s)}\boldsymbol c(s),\qquad s\in\mathcal O(t).
\end{equation}
All required tuples and complementary parameters are nonnegative throughout
the two summation ranges. This is checked at the endpoints of their affine
parameter intervals.

These are coefficient identities, not an assumption that $1$ and $\theta$
are independent. For example, the companion double-contour integral
identifies the dilogarithm coefficient before specialization, and satisfies
the same binomial and factorial identities; see \cite[(2.3)]{VZ18}.

\subsubsection{An exact floor certificate and its all-\texorpdfstring{$n$}{n} interpretation}
For $p>n/4$ and $n\ge1000$, all factorial and l.c.m. arguments below are at
most $14n+10<p^2$. Their valuations therefore require only first-floor terms.
Let $t=t(\lambda)$ be the tuple in A or B and let $a=4n$ or $5n+r$,
respectively. A sufficient denominator exponent for its summand is
\begin{align}
 e_p(t,\lambda)=\min_{s\in\mathcal O(t)}\biggl(&
 \ind_{\mathcal H(s)\ge p}+\ind_{\mathcal K(s)\ge p}
 +\sum_{i=1}^5\left\lfloor\frac{s_i}{p}\right\rfloor
 -\sum_{i=1}^5\left\lfloor\frac{t_i}{p}\right\rfloor\biggr)
 \notag\\*[-1mm]
 &-\left\lfloor\frac ap\right\rfloor
 +\left\lfloor\frac\lambda p\right\rfloor
 +\left\lfloor\frac{a-\lambda}{p}\right\rfloor.
 \label{t:eq:cost-prime}
\end{align}
The sign of the factorial quotient follows directly from
\eqref{t:eq:factorial}. The best orbit element may be chosen separately for
each prime and each summand. Taking the maximum over $\lambda$ covers the
sum; taking the better of A and B is legitimate because they represent the
same form.

\begin{lemma}[Finite-to-all-$n$ floor stability]\label{t:lem:stability}
If $n\ge1000$, $p>n/4$, $p\nmid E_n$ and $\mu(\{n/p\})>0$, then
\begin{equation}\label{t:eq:floor-result}
 \min\left\{\max_{\lambda\in A}e_p(t_A,\lambda),
             \max_{\lambda\in B}e_p(t_B,\lambda)\right\}
 \le 2-\mu(\{n/p\}).
\end{equation}
\end{lemma}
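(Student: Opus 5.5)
The plan is to transcribe the proof of Lemma~\ref{f:lem:floor-stability} and the finite verification from Section~\ref{sec:fifth}. The tuples in \eqref{t:eq:A} and \eqref{t:eq:B}, the orbit maps, the cost \eqref{t:eq:cost-prime}, $E_n$ and the $\mu$-table are identical to those at $1/5$. The power $(-3)^{\cdots}$ is a unit at every prime $p>n/4\ge250$, so it does not enter the cost. Every argument is again affine, $f(\lambda)=An+B\lambda+Cr$ with $B\in\{-1,0,1\}$. A jump of any floor in \eqref{t:eq:cost-prime}, or of an indicator $\ind_{u_i(s)\ge p}$, occurs at an integer $\lambda_0=(kp-An-Cr)/B$. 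It therefore suffices to test the range endpoints and $\lambda_0+\delta$, $\delta\in\{-1,0,1\}$. Substituting \eqref{f:eq:event-substitution}, every comparison becomes one between $dn+e$ and a multiple of $p$, with $|d|\le14$ and $|e|\le6$. These are the same determinant bounds, since the affine lists do not depend on $z$. The separation \eqref{f:eq:prime-separation} coming from $p\nmid E_n$ then fixes every comparison on each open cell of the subdivision \eqref{f:eq:floor-cuts}. When $d=0$, the comparison is decided by the sign of $e$, using $p>250>6$.

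The one genuine change is the range of $x=n/p$. Here $p>n/4$, so $x\in(0,4)$ rather than $(0,2)$. I would extend the cut set to $x=i/d$ with $d\in\mathcal D$ and $0\le i\le4d$. The lemma assumes $\mu(\{n/p\})>0$, so only cells whose fractional part lies in one of the six tabulated intervals matter, and this over four periods $q=0,1,2,3$. The key structural point is the period $q=x-\{x\}\in\{0,1,2,3\}$. In a floor $\lfloor(An+\cdots)/p\rfloor$ it contributes $Aq$, and $A$ varies with the argument, so different periods are genuinely different floor patterns. The cost is not periodic, and each period must be checked on its own. For each cell, take its rational midpoint $N/D$ and evaluate at the integer representative $(n_*,p_*)=(1000N,1000D)$. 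Then compute $\max_\lambda e_p$ over the event set for both expansions and all six shifts $r$, and confirm that the better of A and B is at most $2-\mu$. All arguments are at most $14n+10<p^2$, so only first-power floors occur. The target is $2-\mu$ rather than $\ind_{8n+5\ge p}+\ind_{7n+5\ge p}-\mu$. This is consistent: a nonzero $\mu$ forces $\{x\}>1/6$, and hence $p<6n<7n$.

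The main obstacle is the new periods $q=2,3$, that is $x\in(2,4)$. There the primes are small relative to $n$, many tuple entries cross several multiples of $p$, and it is not clear in advance that either expansion still achieves the saving, especially where $\mu=2$ on $(5/6,6/7)$. I would first just run the midpoint check. If some cell failed, I would make the table an explicit A/B witness choice per cell and shift, as in Section~\ref{sec:fifth}. Failing that, I would shrink the relevant $\mu$ to $0$ for that period, which changes $\Delta_n$ only through its asymptotic constant. As stated, though, the lemma presupposes success, so the proof is the certificate run: it is exact integer arithmetic, and Lemma~\ref{f:lem:floor-stability}'s argument, with the extended cut set, transfers it to all admissible $n$, $p$ and $r$.
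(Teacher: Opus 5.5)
Your proposal is correct and follows the paper's proof essentially step for step. It uses affine arguments with $B\in\{-1,0,1\}$ and tests event neighbours. It reduces every comparison to $dn+e$ against multiples of $p$ with $|d|\le14$, $|e|\le6$, uses the separation coming from $p\nmid E_n$, and extends the cut set to $x=i/d$ on $(0,4)$ (216 cells). It then runs exact checks at the representatives $(1000N,1000D)$ over all four periods and six shifts. The only difference is that the paper prescribes one A/B witness per fractional interval (the same table as at $1/5$) and reports the outcome of the run, whereas you compute the minimum over A and B directly; this is equally valid.
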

\begin{proof}
Every relevant factorial, denominator and binomial argument is
$An+B\lambda+Cr$, with $B\in\{-1,0,1\}$. For $B\ne0$ an event occurs at
\[
 \lambda_0=(kp-An-Cr)/B.
\]
It is an integer. To find the largest cost over integer $\lambda$, it
suffices to test the range endpoints and each event together with its
integer neighbours. Between these events every floor and denominator
predicate is constant.

At a tested neighbour, another argument satisfies
\[
 Dn+E(\lambda_0+\delta)+Fr
 =\left(D-\frac{EA}{B}\right)n+\frac EBkp
   +\left(F-\frac{EC}{B}\right)r+E\delta,
 \qquad\delta\in\{-1,0,1\}.
\]
The coefficient of $kp$ is integral. Exact determinants of the affine
coefficients give comparisons of $dn+e$ with multiples of $p$, where
$|d|\le14$ and $|e|\le6$, including the neighbouring integer and all six
shifts. The checkers reconstruct and verify these bounds.

If $d\ne0$, $p\nmid E_n$ implies
$\operatorname{dist}(dn,p\Z)\ge7$, so the bounded perturbation cannot cross
an unshifted event. If $d=0$, the sign of $e$ decides the comparison,
including coincident events that separate under a shift. This also controls
whether a tested neighbour belongs to the allowed $\lambda$-range.

The remaining boundaries in $x=n/p\in(0,4)$ lie among $x=i/d$, with
\[
 d\in\{1,2,3,4,5,6,7,8,9,10,12,13,14\}.
\]
There are 216 open cells. Boundary cases imply $p\mid E_n$ and are excluded.
On each cell the integer-event patterns are constant after retaining the
signed bounded perturbations. The prescribed witnesses are
\begin{center}
\begin{tabular}{cc@{\qquad}cc}
\toprule
$\{x\}$ & expansion & $\{x\}$ & expansion\\
\midrule
$(1/6,1/5)$&A&$(1/2,4/7)$&A\\
$(1/5,1/4)$&B&$(2/3,5/7)$&A\\
$(1/4,2/7)$&A&$(3/4,4/5)$&A\\
$(1/3,2/5)$&A&$(4/5,5/6)$&B\\
$(2/5,3/7)$&B&$(5/6,6/7)$&B\\
\bottomrule
\end{tabular}
\end{center}
The exact program \texttt{verify\_arithmetic.py} checks these witnesses on
all four periods and all six shifts. It uses scaled rational midpoints
$(n_*,p_*)=(1000N,1000D)$ of each cell, where $N/D$ is reduced. No primality
of $p_*$ is asserted or needed: this is an integer floor calculation, and
nonzero distances to unshifted events are at least 1000. There are 600
nontrivial cell/shift witnesses and 93,432 integer-event tests. Every cost
is at most $2-\mu(\{x\})$. A separate ordered-infinitesimal implementation
reproduces the same result without using the scaled representatives.
\end{proof}

Since $\vp(D_n)=2$ at every prime in $\Delta_n$, Lemma~\ref{t:lem:stability}
and the baseline integrality prove Lemma~\ref{t:lem:arithmetic}.
The exclusion is substantive: at $n=1000$, $p=4001$, $r=2$, both expansions
have worst cost two while the requested cost is one. That prime divides
$4n+1$, so it is correctly excluded by $E_n$.

\subsubsection{The exponential arithmetic cost}
The prime number theorem, applied to each interval in each of the four
periods, gives
\begin{equation}\label{t:eq:saving}
 \lim_{n\to\infty}\frac{\log\Delta_n}{n}
 =S_4:=\sum_{(a,b),w}w\sum_{q=0}^{3}
        \left(\frac1{q+a}-\frac1{q+b}\right)
 =\frac{324954395039}{80313433200}.
\end{equation}
Indeed $q+a<n/p<q+b$ corresponds to a prime interval with endpoints
$n/(q+b)$ and $n/(q+a)$. Endpoint conventions do not change the limit.
Removing prime divisors of $E_n$ loses at most $2\log E_n=O(\log n)$,
since $E_n$ is a polynomial of fixed degree 182. Thus
\begin{equation}\label{t:eq:C}
 C:=\lim_{n\to\infty}\frac{\log K_n}{n}
 =15+3\log3+2\log4-S_4
 =17.022347838961690145\ldots.
\end{equation}
The decimal is descriptive; the final sign is certified rationally below.

\subsection{A compact contour and a real tail}\label{t:sec:contours}
Set
\[
 a=\frac45,\quad b=\frac74,\quad R=\frac1{26000000},\qquad
 X(t)=t+ia\,t(1-t),\quad Y(u)=u-ib\,u(1-u).
\]
Define
\begin{align}
 f(x,y)&=\frac{x^6(1-x)^6y^4(1-y)^4}{(x-y)^5(y+3)^7},
       \label{t:eq:phase-complex}\\
 f_T(t,u)&=\frac{[t(1-t)]^6[u(1-u)]^4}
 {(1-tu)^5(1+3u)^7}.
       \label{t:eq:phase-tail}
\end{align}
Continuous corner limits are set to zero.

\begin{lemma}[Exact phase bounds]\label{t:lem:phases}
For $0\le t,u\le1$,
\[
 |f(X(t),Y(u))|<R,\qquad 0\le f_T(t,u)<R.
\]
\end{lemma}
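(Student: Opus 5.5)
The plan follows the certificate pattern of Lemma~\ref{f:lem:phase}: reduce each inequality to monotone rational majorants on boxes, then check finitely many exact rational comparisons.

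\emph{Complex phase: modulus.} Write $T=t(1-t)$, $U=u(1-u)$ and $S_c(v)=v^2\{1+c^2(1-2v)+c^4v^2\}$ as before. Since $|X(t)|^2=t^2+a^2T^2$ and $|1-X(t)|^2=(1-t)^2+a^2T^2$, their product is $S_a(T)$; similarly for $Y$. One has $|X-Y|^2=(t-u)^2+(aT+bU)^2$, because the imaginary parts now have opposite signs and add. Finally $|Y(u)+3|^2=(3+u)^2+b^2U^2$. Hence
\[
 |f(X(t),Y(u))|^2=\frac{S_a(T)^6S_b(U)^4}{[(t-u)^2+(aT+bU)^2]^5[(3+u)^2+b^2U^2]^7}.
\]

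\emph{Complex phase: box bound.} The $S_c$ factors are increasing in $v\in[0,1/4]$, as already shown. The factor $(3+u)^2+b^2U^2$ is at least $(3+u)^2$, which is increasing, so on a box it is bounded below by $(3+u_-)^2$; this is simpler than the decreasing bound used at $z=5$. With $d=\max(0,t_--u_+,u_--t_+)$, each box then has the rational majorant
\[
 \frac{S_a(T_+)^6S_b(U_+)^4}{[d^2+(aT_-+bU_-)^2]^5(3+u_-)^{14}}.
\]
Near the corners $(0,0)$ and $(1,1)$, use $t^6u^4/(t+u)^5\le\varepsilon^5/32$, the bound $S_c(v)\le v^2(1+c^2)$, the inequality $|X-Y|\ge a(T+U)$ up to a constant near the diagonal corners, and $|Y+3|\ge3$. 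This yields an explicit patch bound less than $R^2$, exactly as in \eqref{f:eq:corner-complex}, with $4^{14}$ replaced by $3^{14}$.

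\emph{Real tail.} Here $f_T\ge0$ is immediate. On a box, bound $T,U$ above by $T_+,U_+$, bound $(1-tu)$ below by $1-t_+u_+$, and bound $(1+3u)$ below by $1+3u_-$. The denominator $1-tu$ vanishes only at the corner $(1,1)$. There $T^6U^4\le(1-t)^6(1-u)^4$ and $1-tu\ge\max(1-t,1-u)$, so the ratio is at most $(1-t)^{6}(1-u)^4/\max^5$. This is at most $\varepsilon^5$ on a corner patch of size $\varepsilon$, which handles it. At $(0,0)$ the function $f_T$ vanishes to order $10$ and causes no difficulty.

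\emph{Main obstacle and certification.} The main obstacle is the complex phase near the diagonal, where $|X-Y|$ is small only through the imaginary separation $aT+bU$. Near the global maximiser, $R=1/26000000$ may be tight, so the boxes must be fine there. I would first generate an adaptive bisection cover of the unit square by quadtree refinement, splitting until each box majorant is below $R^2$, respectively $R$. The certificate is then the list of boxes, checked by exact cross-multiplication together with a disjoint tiling check, as in \texttt{verify\_contours.py}. If the cover fails to close at the corner patches, I would shrink $\varepsilon$ until the patch bound drops below $R^2$.
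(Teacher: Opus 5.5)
\textbf{Genuine gap: the complex-phase majorant is too lossy.} Your overall architecture matches the paper's, and your real-tail majorant and corner patches are fine. That architecture is the modulus identity \eqref{t:eq:modulus}, monotone rational majorants on boxes, analytic corner patches, and an exactly checked disjoint cover. The problem is that you replace $|Y(u)+3|^2=(3+u)^2+b^2U^2$ by $(3+u)^2$. That loss exceeds the available margin, so the bisection you describe never terminates.

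The parabolic contour passes essentially through the complex saddle, near $(t,u)\approx(0.451,0.344)$. Evaluating \eqref{t:eq:modulus} there gives $\log|f|^2\approx-34.2006$. By comparison, $\log R^2=-2\log(2.6\cdot10^{7})\approx-34.1472$. So the true margin at that point is only a factor of about $e^{-0.053}$. At the same point, the factor you drop is
\[
 \bigl((3+u)^2+b^2U^2\bigr)^7/(3+u)^{14}\approx(11.338/11.182)^7\approx e^{0.097}.
\]
Your box majorant dominates, at every point of its box, $|f|^2$ times this factor. This uses only that $S_c$ is increasing on $[0,1/4]$, that $d\le|t-u|$, that $aT_-+bU_-\le aT+bU$, and that $(3+u_-)^{14}\le(3+u)^{14}$. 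Hence every box containing that point has majorant about $e^{0.044}R^2>R^2$, however small the box. The step ``split until each box majorant is below $R^2$'' therefore cannot close. This is a fixed multiplicative loss, not a resolution issue.

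\textbf{The fix.} Keep the $b^2U^2$ term and use that $(3+u)^2+b^2u^2(1-u)^2$ is itself increasing on $[0,1]$. Its derivative is $2(3+u)+2b^2u(1-u)(1-2u)\ge 6-b^2/2>0$, so on a box it is bounded below by its value at $u_-$. This is exactly the paper's bound \eqref{t:eq:box-complex}. With it, the majorant tends to $|f|^2$ as boxes shrink, and the $e^{-0.053}$ margin allows a finite cover; the paper's uses 1,380 rectangles.

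A minor point on the real-tail corner: your bound $\varepsilon^5$ discards the factor $(1+3u)^{-7}$. It then requires $\varepsilon<R^{1/5}\approx 1/30.4$, which $\varepsilon=1/32$ just satisfies. Retaining $(4-3\varepsilon)^{-7}$, as the paper does, removes this tightness.
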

\begin{proof}[Rational certificate and justification]
Put $T=t(1-t)$, $U=u(1-u)$ and
$S_c(v)=v^2(1+c^2-2c^2v+c^4v^2)$. Direct algebra gives
\begin{equation}\label{t:eq:modulus}
 |f(X(t),Y(u))|^2
 =\frac{S_a(T)^6S_b(U)^4}
 {[(t-u)^2+(aT+bU)^2]^5[(3+u)^2+b^2U^2]^7}.
\end{equation}
The functions $S_c$ are increasing on $[0,1/4]$, since
\[
 S'_c(v)=2v(1+c^2-3c^2v+2c^4v^2)
 \ge2v(1+c^2/4)\ge0.
\]
Also $(3+u)^2+b^2u^2(1-u)^2$ is increasing: its derivative is at least
$6-b^2/2>0$.

For a rational rectangle $[t_-,t_+]\times[u_-,u_+]$, let
$T_-,T_+,U_-,U_+$ be the exact extrema of $t(1-t),u(1-u)$ there, and put
$d=\max(0,t_--u_+,u_--t_+)$. The following are rational upper bounds:
\begin{align}
 |f|^2&\le
 \frac{S_a(T_+)^6S_b(U_+)^4}
 {[d^2+(aT_-+bU_-)^2]^5
  [(3+u_-)^2+b^2u_-^2(1-u_-)^2]^7},\label{t:eq:box-complex}\\
 f_T&\le
 \frac{T_+^6U_+^4}{(1-t_+u_+)^5(1+3u_-)^7}.
 \label{t:eq:box-tail}
\end{align}
Boxes with a vanishing lower denominator are not accepted.

For $\varepsilon=1/32$, the two equal-endpoint complex corner squares
are handled analytically. On $0\le t,u\le\varepsilon$,
$t^6u^4/(t+u)^5\le\varepsilon^5/32$. The expression increases in $t$,
and after setting $t=\varepsilon$ it increases in $u\le\varepsilon$.
Using $b\ge a$, $|Y+3|\ge3$ and $S_c(v)\le v^2(1+c^2)$ gives, at
both corners,
\begin{equation}\label{t:eq:corner-C}
 |f|^2\le
 \frac{\varepsilon^{10}(1+a^2)^6(1+b^2)^4}
 {1024a^{10}(1-\varepsilon)^{10}3^{14}}<R^2.
\end{equation}
For the real-tail corner $(1,1)$, reflect the two coordinates. Since
$1-tu\ge(1-\varepsilon)((1-t)+(1-u))$ there,
\begin{equation}\label{t:eq:corner-T}
 f_T\le
 \frac{\varepsilon^5}{32(1-\varepsilon)^5(4-3\varepsilon)^7}<R.
\end{equation}
Both strict inequalities are integer cross-multiplications.

The remaining regions are covered by 1,380 complex rectangles and 74
real-tail rectangles. The files \texttt{complex\_contour\_certificate.json}
and \texttt{tail\_contour\_certificate.json} give their rational endpoints.
The checker verifies \eqref{t:eq:box-complex} and \eqref{t:eq:box-tail} on every
rectangle and verifies a disjoint cover of the entire square, including the
corner patches. It checks vertical slabs and interval tilings, not merely
the sum of areas. No numerical maximum or saddle estimate enters the check.
\end{proof}

\begin{lemma}\label{t:lem:bound}
For every $n\ge1$ and $0\le r\le5$, $|J_{n,r}|<14R^n$.
\end{lemma}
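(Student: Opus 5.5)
The plan is to bound $F_n^{(0)}(6n+r)$ and $F_n^{(1)}(6n+r)$ separately from the representations \eqref{t:eq:vertical}--\eqref{t:eq:mixed}, and then combine them using $|\log3+i\pi|<\sqrt{(11/10)^2+(22/7)^2}<3.4$. Each integral is reduced to a double integral over the unit square whose integrand is $f^n$ or $f_T^n$ times an explicit, $n$-independent weight. Lemma~\ref{t:lem:phases} then gives the factor $R^n$, and the weights contribute constants whose sum must stay below $14$.

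\emph{Step 1: deform the vertical ray.} For each fixed $x\in[0,1]$, I would deform the ray $[0,-i\infty)$ in \eqref{t:eq:vertical} into the parabolic arc $Y([0,1])$, which runs from $0$ to $1$ through the lower half-plane, followed by the real ray $[1,\infty)$.
\begin{itemize}
\item The region swept out lies in the closed lower-right quadrant, strictly below $Y$ except along its boundary. It therefore contains neither $\eta=-3$ nor any point $\eta=x\in[0,1]$.
\item At infinity, $\phi_n=O(|\eta|^{-4n-2})$, so the arc at infinity contributes nothing.
\item The endpoint singularities at $\eta=x=0$ and $\eta=x=1$ are handled by truncation and a homotopy separation estimate, exactly as in the proof of Lemma~\ref{f:lem:integral-bound}.
\end{itemize}

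\emph{Step 2: bound the real tail.} On $[1,\infty)$ I substitute $\eta=1/u$ and take $x=t$ real. A direct power count gives
\[
 \frac{\phi_n\,d\eta}{du}=\pm\frac{u^{4n}(1-u)^{4n}}{(1-tu)^{5n+1}(1+3u)^{7n+1}}.
\]
With the factor $t^{6n}(1-t)^{6n+r}$, the integrand is at most $f_T(t,u)^n/\bigl((1-tu)(1+3u)\bigr)$. Since
\[
 \int_0^1\!\int_0^1\frac{dt\,du}{1-tu}=\zeta(2)<2,
\]
the tail contributes less than $2R^n$.

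\emph{Step 3: the compact piece.} For $\eta=Y(u)$ in the lower half-plane, I deform the $x$-segment to $X([0,1])$ in the upper half-plane; no pole is crossed. The integrand then becomes
\[
 f(X,Y)^n\,\frac{(1-X)^r\,X'Y'}{(X-Y)(Y+3)}.
\]
I use the following bounds:
\begin{itemize}
\item $|1-X(t)|^2=(1-t)^2(1+a^2t^2)\le1$, so $|(1-X)^r|\le1$;
\item $|X'Y'|<3$;
\item $|Y+3|\ge3$;
\item $\int\!\!\int|X-Y|^{-1}\,dt\,du<7$, which is the same estimate as in Lemma~\ref{f:lem:integral-bound} with the half-planes swapped.
\end{itemize}
Together with Lemma~\ref{t:lem:phases}, these give a contribution of at most $7R^n$. (A sharper constant, near $7\cdot3/3$, may be needed if the total budget turns out to be tight.)

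\emph{Step 4: the residue part.} For $F_n^{(1)}$ in \eqref{t:eq:residue}, I take $\Gamma$ to be $Y([0,1])$ together with its conjugate, oriented to enclose $[0,1]$. On the conjugate arc I deform $x$ to the conjugate of $X$. Because the integrand is real-analytic in the appropriate sense, $|f(\bar X,\bar Y)|=|f(X,Y)|$, so both halves obey the same bound. This gives
\[
 |F_n^{(1)}|\le\frac{2}{2\pi}\cdot\frac{3}{3}\cdot 7\,R^n<\frac{7}{3}R^n.
\]
Adding the pieces,
\[
 |J_{n,r}|<(2+7)R^n+3.4\cdot\tfrac73R^n,
\]
which is slightly above $14R^n$. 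I therefore expect to tighten one constant: for instance, the bound $\int\!\!\int|X-Y|^{-1}<7$ can be improved to about $5\log2\cdot\frac{5}{4}+2$, or the real-tail constant can be reduced.

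\emph{Main obstacle.} I expect the hardest step to be justifying the two successive deformations (Steps 1 and 3) near the coincident endpoints $(0,0)$ and $(1,1)$, where the pole $\eta=x$ meets the contour. My first attempt would be to prove an analogue of \eqref{f:eq:homotopy-separation} for the straight homotopy $X_s$ against $Y$ with the half-planes reversed. This should give local integrability of order $O\bigl((t+u)^{5n-1}\bigr)$, which lets me deform on truncated paths and pass to the limit.
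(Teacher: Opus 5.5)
Up to the last line your argument is the paper's proof. It uses the same deformation of $[0,-i\infty)$ to $Y([0,1])\cup[1,\infty)$, the same upward deformation of the $x$-path to $X([0,1])$, and the same tail substitution $\eta=1/u$. The constants also agree: $|C_{n,r}|<7R^n$ for the compact double integral and $T_{n,r}<\zeta(2)R^n<2R^n$ for the tail.

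The gap is the final combination. Bounding $|F^{(0)}|+|\log3+i\pi|\,|F^{(1)}|$ gives $(9+3.4\cdot\frac73)R^n\approx16.9R^n$. Even with exact constants it gives $7+\zeta(2)+\frac7\pi|\log3+i\pi|\approx16.06$. Neither repair you suggest closes this:
\begin{itemize}
\item Replacing the tail constant $2$ by $\zeta(2)$ alone still leaves you above $16R^n$.
\item Your figure $I\le\frac54\cdot5\log2+2\approx6.33$ still gives about $15.5R^n$ with your constants, and about $14.7R^n$ with exact ones.
\end{itemize}
So as written the lemma is not proved.

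The missing idea is that your Step~4 yields an identity, not merely a bound. The integrand has real coefficients, so the conjugate arc, traversed from $1$ back to $0$, contributes $-\overline{C_{n,r}}$. Hence $\int_\Gamma=C_{n,r}-\overline{C_{n,r}}$ and
\[
F_n^{(1)}(6n+r)=\frac{\sigma_n}{\pi}\Im C_{n,r},
\qquad
F_n^{(0)}(6n+r)=\sigma_n\{C_{n,r}+(-1)^{5n+1}T_{n,r}\}.
\]
In $J_{n,r}=F^{(0)}-(\log3+i\pi)F^{(1)}$ the $i\pi$ term cancels $i\sigma_n\Im C_{n,r}$ exactly, leaving
\[
J_{n,r}=\sigma_n\Bigl\{\Re C_{n,r}+(-1)^{5n+1}T_{n,r}-\frac{\log3}{\pi}\Im C_{n,r}\Bigr\}.
\]
Therefore $|J_{n,r}|\le(1+\frac{\log3}{\pi})|C_{n,r}|+T_{n,r}<(\frac53\cdot7+2)R^n<14R^n$, which is the paper's argument. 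Using the triangle inequality with $|\log3+i\pi|$ charges the $i\pi$ piece as extra error, when in fact it removes the imaginary part. That is what costs you a coefficient of about $1.06$ instead of $0.35$ on $|C_{n,r}|$.

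Alternatively, your triangle-inequality route can be rescued without the cancellation. The half-square argument actually proves $I\le5\log2+2<5.5$, and with that, $5.5(1+\frac{3.4}{3})+2<14$. This needs that sharper value of $I$, not the $\frac54$-inflated one, and the margin is thin.
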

\begin{proof}
In \eqref{t:eq:vertical}, deform the ray $[0,-i\infty)$ into
$Y([0,1])\cup[1,\infty)$. For fixed $0<x<1$ the poles at $x$ and $-3$
are not crossed, and the large connecting arcs vanish by
$O(|\eta|^{-4n-2})$ decay. In the compact part, with $\eta$ below the real
axis, deform the $x$-path upward to $X([0,1])$.

For completeness, during the straight homotopy
$X_s(t)=t+isa\,t(1-t)$, $0\le s\le1$, the separation near $(0,0)$ satisfies
\[
 D:=|X_s(t)-Y(u)|\ge|t-u|,\qquad D\ge bu/2
 \quad(0\le t,u\le1/2).
\]
Thus $t+u\le(1+4/b)D$, or
\begin{equation}\label{t:eq:separation}
 |X_s(t)-Y(u)|\ge\frac7{23}(t+u).
\end{equation}
The reflected inequality holds near $(1,1)$. The local full weights are
$O((t+u)^{5n-1})$ and
$O(((1-t)+(1-u))^{5n+r-1})$. These bounds justify truncation, the
endpoint passage and the contour deformations. The real tail has the same
integrable power count at its finite joint endpoint.

Let $C_{n,r}$ be the compact double integral after deformation, without
$\sigma_n$, and let
\[
 T_{n,r}=\int_0^1\!\int_0^1
 \frac{(1-t)^r f_T(t,u)^n}{(1-tu)(1+3u)}\dd u\dd t\ge0.
\]
The substitution $\eta=1/u$ in the real tail gives
\begin{equation}\label{t:eq:split}
 F_n^{(0)}(6n+r)=\sigma_n\{C_{n,r}+(-1)^{5n+1}T_{n,r}\}.
\end{equation}
The positively oriented residue contour travels along the lower arc from
$0$ to $1$ and back along its conjugate. Therefore
\[
 F_n^{(1)}(6n+r)=\frac{\sigma_n}{\pi}\Im C_{n,r},
\]
and \eqref{t:eq:mixed} becomes the explicitly real formula
\begin{equation}\label{t:eq:real-form}
 J_{n,r}=\sigma_n\left\{\Re C_{n,r}+(-1)^{5n+1}T_{n,r}
               -\frac{\log3}{\pi}\Im C_{n,r}\right\}.
\end{equation}
In particular, the $i\pi$ term has not been discarded: it cancels the
imaginary part with the prescribed orientation.

We have $|1-X(t)|\le1$, $|X'(t)Y'(u)|<3$, and $|Y(u)+3|\ge3$.
Furthermore
\[
 I:=\int_0^1\!\int_0^1\frac{\dd t\dd u}{|X(t)-Y(u)|}
 \le5\log2+2<7.
\]
On each equal-endpoint half-square, use
$|X-Y|\ge a(t+u)/2$ or its reflection; together these contribute at most
$5\log2$. On either off-diagonal half-square the separation is at least
$1/4$: either $|t-u|\ge1/4$, or both coordinates are in $[1/4,3/4]$ and
the imaginary separation exceeds $3a/8=3/10$. The remaining contribution
is at most two.

Lemma~\ref{t:lem:phases} now yields $|C_{n,r}|<7R^n$. Also
\[
 T_{n,r}<R^n\int_0^1\!\int_0^1\frac{\dd t\dd u}{1-tu}
 =R^n\sum_{k\ge1}\frac1{k^2}<2R^n.
\]
Since $\log3<2$ and $\pi>3$, \eqref{t:eq:real-form} gives
$|J_{n,r}|<(\frac53\cdot7+2)R^n<14R^n$.
\end{proof}

\subsection{The contiguous recurrence}\label{t:sec:recurrence}
\begin{lemma}\label{t:lem:recurrence}
For $n\ge2$ and integer $H\ge6n$,
\begin{equation}\label{t:eq:recurrence}
 \sum_{j=0}^{5}c_j(H,n)F_n(H+j)=0,
\end{equation}
where
\begin{align*}
 c_0={}&-4(H+1)(H+2)(H-n+1),\\
 c_1={}&(H+2)(17H^2+39Hn+74H-68n^2+43n+81),\\
 c_2={}&-28H^3-145H^2n-242H^2-89Hn^2-760Hn-704H\\
       &\quad+364n^3-2n^2-976n-688,\\
 c_3={}&22H^3+161H^2n+238H^2+387Hn^2+1146Hn+866H\\
       &\quad-186n^3+1101n^2+2017n+1058,\\
 c_4={}&-(H+n+4)(8H^2+55Hn+70H+180n^2+266n+156),\\
 c_5={}&(H+n+4)(H+n+5)(H+2n+5).
\end{align*}
In particular, $c_5(H,n)>0$ in this range.
\end{lemma}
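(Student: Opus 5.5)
The plan is to follow the proof of Lemma~\ref{f:lem:recurrence} at $1/5$, replacing the pole $\eta=5$ by $\eta=-3$. Put
\[
 w=x^{6n}(1-x)^H\phi_n(x,\eta),\qquad p_0=x(1-x)(x-\eta),\qquad q_0=\eta(1-\eta)(\eta+3)(x-\eta),
\]
with $\phi_n$ as in Section~\ref{t:sec:continuation}.

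\emph{Step 1: a creative-telescoping certificate.} We look for polynomials $P,Q\in\Q[x,\eta,H,n]$ and coefficients $b_0,\dots,b_6\in\Q[H,n]$ with $b_0=0$ such that
\[
 \partial_x(p_0Pw)+\partial_\eta(q_0Qw)=w\sum_{j=0}^6 b_j(H,n)(1-x)^j .
\]
After dividing by $w$ this is a polynomial identity. The logarithmic derivatives of $w$ are rational with denominators cleared by $p_0$ and $q_0$, which is why these factors are chosen. We find $P,Q$ by a nullspace computation with an ansatz of bounded degree, for example $\deg_\eta P\le4$ and $\deg_\eta Q\le2$ as before. Verifying the identity is then a finite polynomial check over $\Z[x,\eta,H,n]$, independent of the search. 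We then expect $b_1,\dots,b_6$ to share the factor $5n(H+n+3)(H+n+4)$, possibly with a sign. Dividing by this factor and shifting $H\mapsto H-1$ should produce the displayed $c_j$. Positivity of $c_5=(H+n+4)(H+n+5)(H+2n+5)$ is then immediate.

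\emph{Step 2: integrating the identity to zero.} We integrate against the representations \eqref{t:eq:vertical} and \eqref{t:eq:residue}.

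For the $x$-flux, the factor $x^{6n+1}(1-x)^{H+1}$ in $p_0Pw$ kills the boundary terms at $x=0$ and $x=1$ for every $H\ge0$.

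For the vertical integral \eqref{t:eq:vertical}, the $\eta$-flux vanishes at $\eta=0$ because of $\eta^{4n+1}$. On the ray we have $\phi_n=O(|\eta|^{-4n-2})$, so the degree bounds on $P,Q$ give
\[
 p_0Pw=O(|\eta|^{3-4n}),\qquad q_0Qw=O(|\eta|^{4-4n}).
\]
Hence the first flux is integrable and the second tends to zero at $-i\infty$ once $n\ge2$. The joint corner $x=\eta=0$ needs a separate check: both fluxes carry an extra quadratic vanishing factor relative to the weight $O((x+|\eta|)^{5n-1})$ from Section~\ref{t:sec:continuation}, so they are $O((x+|\eta|)^{5n+1})$. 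This justifies truncating near the corner and passing to the limit.

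For the residue part \eqref{t:eq:residue}, we take the fixed closed contour $|\eta-1/2|=3/4$. The $\eta$-flux is then the derivative of a single-valued function, so its integral vanishes, and the $x$-boundary terms vanish as before.

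Combining through \eqref{t:eq:mixed}, the identity integrates to zero for $F_n$ with the branch $\log3+i\pi$. Dividing by the common factor is legitimate because $5n(H+n+3)(H+n+4)\ne0$ for $H\ge6n-1$, which gives \eqref{t:eq:recurrence}.

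\emph{Main obstacle.} The obstacle is Step 1: actually finding a certificate of small enough degree that yields exactly a sixth-order (five-term after the shift) contiguous relation with these coefficients, and confirming the common factor. If the naive ansatz fails, I would first enlarge $\deg_xP$ and $\deg_xQ$ while keeping $\deg_\eta$ small, since the $\eta$-degree is what controls the decay at infinity in Step 2. Everything else is a routine adaptation of the $1/5$ argument.
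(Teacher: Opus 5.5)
Your proposal is correct and follows essentially the same route as the paper: the identity $\partial_x(p_0Pw)+\partial_y(q_0Qw)=5n(H+n+3)(H+n+4)\,w\sum_{j=0}^5c_j(H+1,n)(1-x)^{j+1}$ with your $p_0,q_0$ and $\deg_yP\le4$, $\deg_yQ\le2$, then the same boundary analysis ($x$-flux killed by $x^{6n+1}(1-x)^{H+1}$, $O((x+|y|)^{5n+1})$ at the joint corner, $O(|y|^{3-4n})$ and $O(|y|^{4-4n})$ at infinity for $n\ge2$, single-valuedness on a fixed residue contour), and finally division by the common factor with $H\mapsto H-1$. The one difference is that the paper does not search for $P,Q$; it supplies them in a certificate file and checks the polynomial identity exactly, so the existence step you flag as the main obstacle is settled there by exhibiting $P$ and $Q$.
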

\begin{proof}[Exact identity and boundary terms]
Let $w=x^{6n}(1-x)^H\phi_n(x,y)$ and
\[
 p_0=x(1-x)(x-y),\qquad q_0=y(1-y)(y+3)(x-y).
\]
The supplied polynomials $P,Q\in\Z[x,y,H,n]$ satisfy the identity below.
They are recorded in the file
\begin{center}\small\texttt{negative\_contiguous\_certificate.json}.\end{center}
\begin{multline}\label{t:eq:telescoper}
 \partial_x(p_0Pw)+\partial_y(q_0Qw)
 =5n(H+n+3)(H+n+4)\,w
 \sum_{j=0}^{5}c_j(H+1,n)(1-x)^{j+1}.
\end{multline}
After cancelling $w$, this is a polynomial identity. The verifier
reconstructs the logarithmic derivatives from the actual weight with pole
$y=-3$ and checks the identity exactly. It also checks the six cleaned
coefficients and $\deg_yP\le4$, $\deg_yQ\le2$. No recurrence fitting is used.

For the vertical integral, the $x$ flux vanishes at $x=0,1$ for integer
$H\ge0$. The $y=0$ flux vanishes; at the joint origin both fluxes are
$O((x+|y|)^{5n+1})$. At infinity the $x$ flux is
$O(|y|^{3-4n})$ and the $y$ flux is $O(|y|^{4-4n})$, respectively
integrable and vanishing for $n\ge2$. For the residue component use a fixed
closed contour away from the endpoints. Its $y$ flux is a total derivative
of a single-valued rational function; the $x$ boundary terms again vanish.

Hence the raw identity integrates to zero for both components for $n\ge2$,
$H\ge0$. Divide by $5n(H+n+3)(H+n+4)$ and replace $H$ by $H-1$.
The resulting recurrence is \eqref{t:eq:recurrence}; no zero denominator is
introduced for the stated range.
\end{proof}

\subsection{A fixed-\texorpdfstring{$n$}{n} endpoint asymptotic and nonvanishing}
\begin{proposition}\label{t:prop:endpoint}
For each fixed integer $n\ge1$,
\begin{equation}\label{t:eq:endpoint}
 F_n(H)\sim\frac{(4n)!(n-1)!}{(-3)^{7n+1}}\,H^{-5n-1}
 \qquad(H\to\infty).
\end{equation}
In particular, its tail is nonzero. No uniformity in $n$ is asserted or
needed.
\end{proposition}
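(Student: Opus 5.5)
We follow the proof of Proposition~\ref{f:prop:positivity}, now in the negative-branch representation \eqref{t:eq:vertical}--\eqref{t:eq:mixed}. By \eqref{t:eq:residue}, with a residue contour enclosing $[0,1]$, the component $F_n^{(1)}(H)$ equals $\sigma_n\int_0^1x^{6n}(1-x)^H\rho_n(x)\dd x$. Here $\rho_n(x)=\Res_{\eta=x}\phi_n(x,\eta)$ is a rational function whose only possible pole is at $x=-3$, so it is bounded on $[0,1]$. Hence $F_n^{(1)}(H)=O(H^{-6n-1})$, and the factor $\log3+i\pi$ is irrelevant at this order. The whole asymptotic therefore comes from
\[
 F_n^{(0)}(H)=\int_0^1x^{6n}(1-x)^Hg_n(x)\dd x,\qquad g_n(x)=\sigma_n\int_0^{-i\infty}\phi_n(x,\eta)\dd\eta .
\]

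\emph{Step 1: behaviour of $g_n$ at $x=0$.} Substitute $\eta=xs$, with $s$ running along the same ray $[0,-i\infty)$. This gives
\[
 x^ng_n(x)=\sigma_n\int_0^{-i\infty}\frac{s^{4n}(1-xs)^{4n}}{(1-s)^{5n+1}(3+xs)^{7n+1}}\dd s .
\]
For $s$ on the ray and $0<x\le1$ we have $|3+xs|\ge\max(3,x|s|)$ and $|1-xs|\le1+x|s|$. Consequently $|1-xs|/|3+xs|$ is bounded by an absolute constant, and $|3+xs|^{-3n-1}\le3^{-3n-1}$. Together with $|1-s|\ge\max(1,|s|)$, this yields the majorant $C_n|s|^{4n}(1+|s|)^{-5n-1}$, which is integrable along the ray. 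Dominated convergence then gives that $b_n(x):=x^ng_n(x)$ is bounded on $(0,1]$ and
\[
 b_n(0^+)=\sigma_n3^{-7n-1}\int_0^{-i\infty}\frac{s^{4n}}{(1-s)^{5n+1}}\dd s .
\]
In the limit integrand the pole at $s=-3/x$ has disappeared, and the integrand decays like $|s|^{-n-1}$. We may therefore rotate the ray through the lower half-plane onto $(-\infty,0]$, avoiding the pole at $s=1$. Putting $s=-v$, the integral becomes $-\int_0^\infty v^{4n}(1+v)^{-5n-1}\dd v=-\Bfun(4n+1,n)=-(4n)!(n-1)!/(5n)!$. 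Since $\sigma_n=(-1)^n$, we get
\[
 b_n(0^+)=-(-1)^n3^{-7n-1}\frac{(4n)!(n-1)!}{(5n)!}=(-3)^{-7n-1}\frac{(4n)!(n-1)!}{(5n)!}.
\]

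\emph{Step 2: the $H$-asymptotic.} Write $F_n^{(0)}(H)=\int_0^1x^{5n}(1-x)^Hb_n(x)\dd x$ and substitute $x=t/H$. Using $(1-t/H)^H\le e^{-t}$ and the boundedness of $b_n$, dominated convergence gives
\[
 H^{5n+1}F_n^{(0)}(H)\to(5n)!\,b_n(0^+)=\frac{(4n)!(n-1)!}{(-3)^{7n+1}}.
\]
This limit is nonzero, so subtracting $(\log3+i\pi)F_n^{(1)}(H)=O(H^{-6n-1})$ proves \eqref{t:eq:endpoint}. In particular $F_n(H)\ne0$ for all large $H$, for each fixed $n$, and no uniformity in $n$ is needed.

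\emph{Main obstacle.} The delicate point is Step 1. The pole $\eta=-3$, that is $s=-3/x$, moves to infinity along precisely the direction into which we want to rotate. So the ray must not be rotated before taking $x\to0$. The majorant must be established on the original vertical ray, where $|3+xs|\ge3$ holds uniformly, and the rotation to the negative real axis is done only afterwards, for the $x$-independent limit integrand. One must also keep careful track of the sign and branch conventions from $\sigma_n$ and the orientation of the ray, so that the constant comes out exactly as $(-3)^{-7n-1}$.
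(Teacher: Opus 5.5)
Your proposal is correct and is essentially the paper's own proof. Your substitution $\eta=xs$ along the ray is the paper's $\eta=-ixt$ with $s=-it$. After that you follow the same steps: the uniform $3^{-3n-1}$ majorant on the vertical ray, dominated convergence, rotation of the $x$-independent limit integral to obtain $-\Bfun(4n+1,n)$, the scaling $x=t/H$ with $(1-t/H)^H\le e^{-t}$, and the bound $F_n^{(1)}(H)=O(H^{-6n-1})$ from boundedness of the residue function on $[0,1]$.
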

\begin{proof}
Write
\[
 g_n(x)=\sigma_n\int_0^{-i\infty}\phi_n(x,\eta)\dd\eta,
 \qquad F_n^{(0)}(H)=\int_0^1x^{6n}(1-x)^H g_n(x)\dd x.
\]
The substitution $\eta=-ixt$ gives
\begin{equation}\label{t:eq:scaled-inner}
 x^ng_n(x)=\sigma_n(-i)\int_0^\infty
 \frac{t^{4n}(1+ixt)^{4n}}
 {(1+it)^{5n+1}(3-ixt)^{7n+1}}\dd t.
\end{equation}
For $0<x\le1$, the modulus of the $x$-dependent ratio is bounded by
\[
 \frac{(1+x^2t^2)^{2n}}{(9+x^2t^2)^{(7n+1)/2}}
 \le3^{-3n-1}.
\]
The remaining dominating function has tail $t^{-n-1}$, integrable for
$n\ge1$. Dominated convergence applies. Rotation in the beta integral gives
\[
 (-i)\int_0^\infty\frac{t^{4n}}{(1+it)^{5n+1}}\dd t
 =-\Bfun(4n+1,n).
\]
Consequently
\begin{equation}\label{t:eq:inner-limit}
 \lim_{x\downarrow0}x^ng_n(x)
 =-\sigma_n3^{-7n-1}\Bfun(4n+1,n)
 =\frac{(4n)!(n-1)!}{(-3)^{7n+1}(5n)!}.
\end{equation}
The function $b_n(x)=x^ng_n(x)$ is bounded on $(0,1]$ and has this limit.
Substitute $x=t/H$ in
$F_n^{(0)}(H)=\int_0^1x^{5n}(1-x)^Hb_n(x)\dd x$, and use
$(1-t/H)^H\le e^{-t}$. A second dominated-convergence argument shows
\[
 H^{5n+1}F_n^{(0)}(H)\longrightarrow (5n)!b_n(0).
\]

The residue in \eqref{t:eq:residue} is
\[
 A_n(x)=\Res_{y=x}\phi_n(x,y)
 =\frac{(-1)^{5n+1}}{(5n)!}\frac{d^{5n}}{dx^{5n}}
 \left(\frac{x^{4n}(1-x)^{4n}}{(x+3)^{7n+1}}\right).
\]
It is a bounded rational function on $[0,1]$. Therefore
\[
 F_n^{(1)}(H)=O\left(\int_0^1x^{6n}(1-x)^H\dd x\right)
 =O(H^{-6n-1}),
\]
which is smaller than the leading term by a factor $O(H^{-n})$.
Equations \eqref{t:eq:mixed} and \eqref{t:eq:inner-limit} prove
\eqref{t:eq:endpoint}. The sign is $(-1)^{n+1}$, not always positive.
\end{proof}

\begin{corollary*}
For $n\ge2$, five consecutive values $F_n(H),\ldots,F_n(H+4)$ with
$H\ge6n$ cannot all vanish. In particular, at least one of
$J_{n,0},\ldots,J_{n,5}$ is nonzero.
\end{corollary*}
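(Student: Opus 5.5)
The argument runs exactly parallel to Lemma~\ref{f:lem:nonzero} at $1/5$. It combines the recurrence of Lemma~\ref{t:lem:recurrence} with the endpoint asymptotic of Proposition~\ref{t:prop:endpoint}.

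Fix $n\ge2$, and suppose that $F_n(H)=F_n(H+1)=\cdots=F_n(H+4)=0$ for some integer $H\ge6n$.

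First, apply \eqref{t:eq:recurrence} at the index $H$. All coefficients $c_0,\ldots,c_4$ multiply vanishing values, so $c_5(H,n)F_n(H+5)=0$. The factor $c_5(H,n)=(H+n+4)(H+n+5)(H+2n+5)$ is a product of positive integers for $H\ge6n$, so it is nonzero. Hence $F_n(H+5)=0$.

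Now the block $F_n(H+1),\ldots,F_n(H+5)$ vanishes and starts at an index $H+1\ge6n$. The recurrence therefore applies again. By induction, $F_n(H')=0$ for every integer $H'\ge H$. Lemma~\ref{t:lem:recurrence} is valid for all integers $H\ge6n$ with $n\ge2$, so no range condition is lost along the way. Here $F_n(H')$ means the continued integral \eqref{t:eq:mixed}, which is exactly the object to which the recurrence refers.

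This contradicts Proposition~\ref{t:prop:endpoint}. For fixed $n$, that proposition gives $F_n(H')\sim(4n)!(n-1)!(-3)^{-7n-1}H'^{-5n-1}$. The constant is nonzero, so $F_n(H')\ne0$ for all sufficiently large $H'$. Only fixed-$n$ asymptotics are used, which is what the proposition supplies.

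For the second assertion, take $H=6n$. The values $J_{n,r}=F_n(6n+r)$ for $r=0,\ldots,4$ form five consecutive values starting at $H=6n$, so they cannot all vanish. A fortiori, $(J_{n,0},\ldots,J_{n,5})\ne0$.

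There is no real obstacle. The only point to check is that the recurrence's validity range $H\ge6n$ matches the starting index $6n$ of the shifts, and that $c_5$ is positive there; both are immediate from the displayed formulas.
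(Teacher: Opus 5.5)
Your proposal is correct and follows essentially the same route as the paper: the nonzero forward coefficient $c_5(H,n)$ in Lemma~\ref{t:lem:recurrence} propagates a block of five zeros to all later indices, contradicting the nonzero fixed-$n$ asymptotic of Proposition~\ref{t:prop:endpoint}. Your extra checks are just explicit versions of what the paper's two-line proof leaves implicit: that the recurrence's range $H\ge6n$ is preserved under the induction, and that the shifts $r=0,\ldots,4$ start at $H=6n$.
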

\begin{proof}
A block of five zeros and the nonzero forward coefficient in
Lemma~\ref{t:lem:recurrence} would force every subsequent value to vanish.
This contradicts Proposition~\ref{t:prop:endpoint} for that same fixed $n$.
\end{proof}

\subsection{The exact positive margin and final contradiction}
The constants in \eqref{t:eq:C} and Lemma~\ref{t:lem:bound} give
\begin{align}
 \gamma&=\log26000000-C\notag\\
 &=3\log2+6\log5+\log13-3\log3-15+S_4\label{t:eq:gamma}\\
 &=0.0512592570240660045\ldots.\notag
\end{align}
There is a short rational sign certificate. The bounds
\[
 \log2>0.6931,\quad\log5>1.6094,\quad\log13>2.5649,
 \quad\log3<1.0987,\quad S_4>4.046
\]
are inequalities between exact rational numbers and logarithms, not rounded
sign decisions. They follow from the positive atanh series: for
$t=(z-1)/(z+1)$,
\[
 L_N(z)=2\sum_{k=0}^{N-1}\frac{t^{2k+1}}{2k+1}
 <\log z< L_N(z)+\frac{2t^{2N+1}}{(2N+1)(1-t^2)}.
\]
The checker uses $N=80$. Substitution in \eqref{t:eq:gamma} yields
\begin{equation}\label{t:eq:gamma-rational}
 \gamma>
 3\frac{6931}{10000}+6\frac{16094}{10000}
 +\frac{25649}{10000}-3\frac{10987}{10000}-15+
 \frac{4046}{1000}
 =\frac{101}{2000}>\frac1{20}.
\end{equation}

\begin{proof}[Proof of the $-1/3$ assertion in Theorem~\ref{thm:main}]
For all sufficiently large $n$, the six forms $K_nJ_{n,r}$ have integer
coefficients in $1,\theta$, at least one is nonzero, and
\[
 \max_{0\le r\le5}|K_nJ_{n,r}|
 <14\exp\{-(\gamma+o(1))n\}\longrightarrow0.
\]
If $\theta=a/b\in\Q$ with $b\ge1$, every nonzero integer linear form in
$1,\theta$ has modulus at least $1/b$. This contradicts the displayed
limit and proves the $-1/3$ assertion in Theorem~\ref{thm:main}.
\end{proof}

\section{Verification, stress tests, and scope}\label{sec:verification-combined}
The theorem-level arguments above separate exact finite certificates from the deductions that extend them to all sufficiently large $n$.  The accompanying archive contains the original proof verifiers for the three constructions, together with independently implemented audit routines and the supplementary $10{,}000$-term calculations described below.

\subsection{Proof-critical finite certificates}
For $-1/4$, the finite certificate checks the rational interval partition for the retained Rhin--Viola prime divisor, the exact constant in its three-period truncation, the fixed-contour inequality and the displayed recurrence identities.  For $1/5$, the core checks cover 108 floor cells for all six shifts, 458 complex and 205 real contour rectangles, and the polynomial telescoping identity that gives the six-term recurrence.  For $-1/3$, the corresponding arithmetic certificate has 216 rational cells and 93,432 event tests, while the contour certificate has 1,380 compact rectangles and 74 tail rectangles.  In all cases the final sign of the exponential margin is decided by rational inequalities rather than floating-point comparison.

A separately implemented audit reproduced the canonical linear forms for all shifts through $n=20$ for the three values, including the negative branch by algebraic continuation of the coefficient polynomials.  It also rechecked the two source normalisations that are easiest to misread in the six-parameter formulas, the cancellation of the unwanted logarithmic term, the summand-level factorial divisors at several values $n\ge1000$, the recurrences on true coefficient values, the endpoint signs and the contour box inequalities.  These checks are supplementary: in particular, the all-$n$ floor-stability arguments in the $1/5$ and $-1/3$ proofs remain mathematical deductions to be reviewed as such.

For orientation, an independent saddle calculation gives exponential decay rates approximately
\[
 -23.499,\qquad -18.906,\qquad -17.101
\]
for the three constructions in the order $-1/4,1/5,-1/3$, compared with the deliberately weaker certified bounds approximately $-23.383,-18.826,-17.074$.  These numerical values are not used in the proofs.

\subsection{Ten thousand exact primitive forms for the two crossbreed constructions}
As a supplementary stress test, we propagated the first $10{,}000$ canonical coefficient pairs for each of $\Li_2(1/5)$ and $\Li_2(-1/3)$.  Separate diagonal recurrences in $n$, each of order five with polynomial coefficients of degree 46, were reconstructed exactly over $\Q$ and checked against direct GMP coefficient extraction.  High-index hold-out checks at
\[
 97,\ 499,\ 997,\ 1999,\ 4999,\ 9995
\]
were also performed modulo three unrelated large primes before the long propagation was used.

At every $1\le n\le10000$, the canonical rational pair was cleared to an integer pair and its complete coefficient gcd was removed.  The resulting primitive form was then bounded by the rigorous contour estimate from the corresponding proof.  All $20{,}000$ primitive forms were certified to have modulus less than one.  Moreover, in each sequence all $9{,}999$ determinants of adjacent primitive coefficient pairs are nonzero, so two consecutive forms cannot vanish simultaneously.  At $n=10000$ the conservative bounds are already
\[
 |L_n(1/5)|<2^{-5837.28\ldots},\qquad
 |L_n(-1/3)|<2^{-2668.27\ldots}.
\]
This is strong finite evidence, not a replacement for the all-$n$ divisor, contour or nonvanishing proofs.  The full $20{,}000$-row data, exact recurrences, seeds and regeneration code are included in the supplementary archive.  The $-1/4$ verification archive also contains an earlier independent $10{,}000$-term exact stress test for its five-parameter sequence.

\subsection{Review boundary}
The most important remaining objects for independent mathematical vetting are the coefficient-level factorial transfer inside the two Viola--Zudilin expansions, the floor-stability passage from exact cells to all sufficiently large $n$, and the contour homotopies and boundary terms in the recurrence arguments.  The $-1/3$ proof has the smallest certified margin and should consequently be regarded as the most sensitive to a hidden normalisation or sign error.  No such error was found in the source-level or computational audits described above.

\section*{Statement on computational and AI assistance}
OpenAI's ChatGPT was used extensively in exploratory parameter searches, symbolic and numerical calculations, literature search, proof auditing, software preparation and drafting.  Anthropic's Claude was used separately for adversarial source-checking and independent computational vetting of the three proof drafts.  The mathematical arguments are intended to stand independently of that assistance; the author takes responsibility for the contents.  Exact executable certificates are supplied to make finite computations reproducible, but AI-assisted checking is not a substitute for independent specialist review or proof-assistant formalisation.

\end{document}